\documentclass[times,sort&compress,3p]{elsarticle}
\journal{Journal of Multivariate Analysis}

\usepackage{amsmath,amsfonts,amssymb,amsthm,graphicx,hyperref,url}
\usepackage[labelfont=bf]{caption}

\usepackage{mathtools}
\mathtoolsset{showonlyrefs=true} 
\usepackage{dsfont} 
\usepackage{mathrsfs} 
\usepackage{subcaption}
\usepackage{appendix}
\usepackage{xcolor}
\usepackage{geometry}
\geometry{top=1.1in,bottom=1.25in,left=0.8in,right=0.8in}

\theoremstyle{plain}
\newtheorem{theorem}{Theorem}
\newtheorem{proposition}{Proposition}
\newtheorem{lemma}{Lemma}
\newtheorem{corollary}{Corollary}

\theoremstyle{definition}

\newtheorem{remark}{Remark}

\newcommand{\N}{\mathbb{N}}
\newcommand{\Z}{\mathbb{Z}}
\newcommand{\R}{\mathbb{R}}
\newcommand{\PP}{\mathbb{P}}
\newcommand{\EE}{\mathbb{E}}
\newcommand{\BB}{\mathbb{B}\mathrm{ias}}
\newcommand{\VV}{\mathbb{V}\mathrm{ar}}
\newcommand{\CC}{\mathbb{C}\mathrm{ov}}

\newcommand{\bb}[1]{\boldsymbol{#1}}

\newcommand{\OO}{\mathcal O}
\newcommand{\oo}{\mathrm{o}}
\newcommand{\leqdef}{\vcentcolon=}
\newcommand{\reqdef}{=\vcentcolon}
\newcommand{\rd}{{\rm d}}

\newcommand{\ind}{\mathds{1}}

\newcommand{\e}{\varepsilon}
\DeclareMathAlphabet\mathbfcal{OMS}{cmsy}{b}{n}

\allowdisplaybreaks

\begin{document}

\begin{frontmatter}

    \title{Asymptotic properties of Dirichlet kernel density estimators}%

    \author[a1,a2]{Fr\'ed\'eric Ouimet}\ead{frederic.ouimet2@mcgill.ca}%
    \author[a3]{Raimon Tolosana-Delgado}\ead{r.tolosana@hzdr.de}%

    \address[a1]{California Institute of Technology, Pasadena, CA 91125, USA.}%
    \address[a2]{McGill University, Montreal, QC H3A 2K6, Canada.}%
    \address[a3]{Helmholtz-Zentrum Dresden-Rossendorf, Helmholtz Institute Freiberg for Resources Technology, 09599 Freiberg, Saxony, Germany.}%


    \begin{abstract}
        We study theoretically, for the first time, the Dirichlet kernel estimator introduced by \citet{doi:10.2307/2347365} for the estimation of multivariate densities supported on the $d$-dimensional simplex.
        The simplex is an important case as it is the natural domain of compositional data and has been neglected in the literature on asymmetric kernels.
        The Dirichlet kernel estimator, which generalizes the (non-modified) unidimensional Beta kernel estimator from \citet{MR1718494}, is free of boundary bias and non-negative everywhere on the simplex.
        We show that it achieves the optimal convergence rate $\OO(n^{-4/(d+4)})$ for the mean squared error and the mean integrated squared error, we prove its asymptotic normality and uniform strong consistency, and we also find an asymptotic expression for the mean integrated absolute error.
        To illustrate the Dirichlet kernel method and its favorable boundary properties, we present a case study on minerals processing.
    \end{abstract}

    \begin{keyword}
        Dirichlet kernel \sep Beta kernel \sep asymmetric kernel \sep density estimation \sep simplex \sep boundary bias \sep variance \sep mean squared error \sep mean integrated absolute error \sep asymptotic normality \sep strong consistency \sep multivariate associated kernel
        \MSC[2010]{Primary: 62G07 Secondary: 62G05, 62G20}
    \end{keyword}

\end{frontmatter}

\vspace{-4mm}
\section{Introduction}\label{sec:introduction}

    Kernel smoothing or kernel density estimation is a well-known methodology to characterize (and visualize) the probability density function of a random variable or random vector in a nonparametric way. It can be considered as a bin-free alternative to histograms, and is particularly useful in multivariate cases with a low to moderate number of dimensions, where the accuracy of histograms dramatically deteriorates with the number of variables due to the curse of dimensionality. Apart from visualization purposes, density estimation can be used for nonparametric alternatives to regression and classification (both supervised and unsupervised). One of the most intuitive usages, for instance, is to construct conditional density plots (in \texttt{R} command ``\texttt{cdplot}''), to represent how the conditional probabilities of a categorical variable depends on quantitative covariables.
    This is true in particular for compositional data, but methods of density estimation on the simplex that address the well-known spill-over problem of traditional kernel estimators are very scarce in the literature, and theoretical results specific to the simplex are almost nonexistent.
    To remedy this situation, our main goal in this paper is to revisit the Dirichlet kernel estimator on the simplex introduced by \citet{doi:10.2307/2347365} and study its asymptotic properties in details.
    A case study on minerals processing presented in Section~\ref{sec:case.study} will show an explicit and elaborate use of conditional density plots using Dirichlet kernel estimators.

    \vspace{0mm}
    Nevertheless, our main contribution in this paper remains theoretical.
    We will find asymptotic expressions for the pointwise bias, the pointwise variance, the mean squared error (MSE) and the mean integrated squared error (MISE).
    These results generalize the ones for the Beta kernel in \cite{MR1718494} ($d = 1$).
    The optimal bandwidth parameters $b$, with respect to MSE and MISE, are also written explicitly.
    In practice, this can be used to implement a plug-in selection method for the bandwidth parameter.
    The asymptotic normality follows from a straightforward verification of the Lindeberg condition for double arrays, although it is completely new even for Beta kernel estimators.
    We also obtain the asymptotics of the mean integrated absolute error (MIAE) and the uniform strong consistency, which generalize the results from \citet{MR1985506}.
    To be more precise, the proof of the $L^1$ asymptotics follows the same strategy but the proof of the uniform strong consistency is completely different and represents our biggest contribution (we combine estimates on the difference of Dirichlet densities with different parameters together with a novel chaining argument).
    Our rates of convergence for the MSE and MISE are optimal, as they coincide (assuming the identification $b \approx h^2$) with the rates of convergence for the MSE and MISE of traditional multivariate kernel estimators, studied for example in \cite{MR0740865}.
    In contrast to other methods of boundary bias reduction (such as the reflection method or boundary kernels (see, e.g., \cite{MR3329609})), this property is built-in for Dirichlet kernel estimators, which makes them one of the easiest to use in the class of estimators that are asymptotically unbiased near (and on) the boundary.
    Dirichlet kernel estimators are also non-negative everywhere on their domain, which is definitely not the case of many estimators corrected for boundary bias.
    This is another reason for their desirability.
    Bandwidth selection methods and their consistency will be investigated thoroughly in upcoming work.

    \vspace{0mm}
    Here is the outline of the paper.
    In Section~\ref{sec:overview}, we present an overview of the literature of asymmetric kernels.
    In Section~\ref{sec:Dirichlet.kernels}, we define Dirichlet kernel estimators, we state some of their basic properties, and we show that Dirichlet kernels are continuous examples in the broader class of multivariate associated kernels introduced by \citet{MR3760293,Kokonendji_Some_2021}.
    In Section~\ref{sec:asymptotic.properties}, our main results are stated, which consists of the asymptotic behavior of the pointwise bias and variance (including points near the boundary of the simplex), the mean squared error, the integrated mean squared error, the mean integrated absolute error, the uniform consistency and the asymptotic normality.
    All the proofs are gathered in Section~\ref{sec:proofs}.
    In Section~\ref{sec:case.study}, the case study on minerals processing is presented.

\section{Overview of the literature}\label{sec:overview}

    Below, we give a systematic overview of the main line of articles on density estimation using Beta kernels (i.e., Dirichlet kernels with $d = 1$), and then we briefly mention several other classes of asymmetric kernels with references.
    There might be more details than the reader expects, but this is because the subject is vast and relatively important references are often disjointed or missing in the literature, which makes it hard for newcomers to get a complete chronological account of the progress in the field.

    \vspace{0mm}
    \citet{doi:10.2307/2347365} were the first to define the Dirichlet kernel estimator from \eqref{eq:Dirichlet.estimator} for the purpose of density estimation.
    The paper compared their performance empirically with an alternative approach, called the logistic-normal kernel method, where the data on the simplex is first sent to $\R^d$ via an additive log-ratio transformation and a multivariate Gaussian kernel smoothing is applied afterwards. The authors recommended Dirichlet kernels over the logistic-normal kernel method if there was a suspicion of sparseness in the data.
    To the best of our knowledge, \citet{doi:10.1016/j.cageo.2009.12.011} were, until now, the only other authors to mention Dirichlet kernels, or even kernel smoothing on the simplex, in any meaningful way.
    They compared numerically the outcome of different ways of establishing the bandwidth covariance structure of an additive logistic normal kernel: a full bandwidth matrix chosen by cross-validation, a full bandwidth matrix chosen by unconstrained plug-in method, and the logistic-normal kernel method presented in \cite{doi:10.2307/2347365} with a bandwidth matrix $H = h S$ ($S$ is the sample covariance matrix and $h$ is chosen to maximize the pseudo-likelihood).

    \vspace{0mm}
    \citet{MR1685301} were the first to study Beta kernels ($d = 1$) theoretically, and they did so in the context of smoothing for regression curves with equally spaced and fixed design points.
    The asymptotics of the pointwise bias, the integrated variance and the MISE for the estimator of the regression function were found (the optimal MISE was shown to be $\OO(n^{-4/5})$).
    These results extended to Beta kernel estimators some parts of the results from \citet{MR0858109}, who was working with the closely related Bernstein estimators.
    \citet{MR1718494} was the first author to study (unmodified) Beta kernel estimators $(\hat{f}_1)$ theoretically in the context of density estimation.
    A certain boundary Beta kernel modification, denoted by $\hat{f}_2$, was also considered.
    The asymptotics of the pointwise bias and pointwise variance of both $\hat{f}_1$ and $\hat{f}_2$ were found everywhere on $(0,1)$ as well as the MISE (the optimal MISE was shown to be $\OO(n^{-4/5})$).
    Numerical comparisons of the estimators were made with the local linear estimator of \citet{MR1192215} and \citet{doi:10.1007/BF00147776} and the non-negative modification proposed by \citet{MR1422417}, although various criticisms were raised by \citet{MR2598955}.
    \citet{MR1742101} generalized the results of \citet{MR1685301} to arbitrary collections of fixed design points using a Gasser-M{\"u}ller type estimator (\citet{doi:10.1007/BFb0098489}).
    A boundary Beta kernel modification, analogous to $\hat{f}_2$ from \citet{MR1718494}, was also considered and the asymptotic results were also extended to that regression curve estimator.
    In \cite{MR1910175}, those results were further extended to stochastic design points using a local linear smoother with Beta kernel (and Gamma kernel when the data is supported on $[0,\infty)$ instead of $[0,1]$) analogous to the traditional version proposed by \citet{MR1193323}.

    \vspace{0mm}
    \citet{MR1985506} computed the asymptotics of the MIAE for Beta kernel estimators, which extended the analogous result for traditional kernel estimators found in Theorem~2 of \citet{MR760686}.
    As pointed out by \citet[p.44]{MR3329609}, there are many reasons to estimate the MIAE as it enjoys many advantages over the MISE.
    It puts more emphasis on the tails of the target density, it is a dimensionless quantity, it is invariant to monotone changes of scale, and it is uniformly bounded (by $2$). (For a thorough study of the $L^1$ point of view, see \citet{MR780746}.)
    Another result that was proved by \citet{MR1985506} for Beta kernel estimators is the uniform strong consistency, which extended the analogous result for traditional kernel estimators found in Theorem~1 of \citet{MR859633}. However, the proof in \cite{MR1985506} is completely different.
    They apply an integration by parts trick to relate $\sup_{x\in [0,1]} |\hat{f}_{n,b}(x) - \EE[\hat{f}_{n,b}(x)]|$ to the supremum of the recentered empirical c.d.f.\ and then estimate the latter with the Dvoretzky-Kiefer-Wolfowitz inequality.
    In the context of Bernstein estimators (see, e.g., \citet{MR1910059}), a similar idea (its discrete version) was to apply a union bound on a partition of the support of the target density into small boxes and then use concentration bounds on the supremum of ``increments'' of the recentered empirical c.d.f.\ inside each box, where the width of the boxes is carefully chosen so that the bounds are summable and the result follows by the Borel-Cantelli lemma. In the present paper, we will instead apply a novel chaining argument (that might be of independent interest) and give ourselves a buffer on the boundary to avoid technical issues related to the partial derivatives of the Dirichlet density $K_{\bb{\alpha},\beta}$ with respect to $\alpha_1,\dots,\alpha_d,\beta$. It is not obvious how to generalize the proof of \citet{MR1985506} on the simplex.

    \vspace{0mm}
    \citet{doi:10.1016/j.jbankfin.2003.10.018} were the first to use Beta kernels to estimate recovery rate densities of defaulted bonds.
    They also investigated the finite sample performance of the Beta kernel density estimator by comparing the averages of integrated squared errors of Monte Carlo samples against two other methods: traditional Gaussian kernel smoothing, and a logistic transformation combined with traditional Gaussian kernel smoothing and a back transformation of the estimated density by multiplying it with the derivative of the inverse mapping (i.e., the appropriate Jacobian).
    Furthermore, they showed that the usual practice of approximating the recovery function through a Beta density calibrated with the sample mean and variance should be handled with caution as the inflexibility of the parametric approach can lead (for example) to an underestimation of the Value-at-Risk.
    \citet{Gourieroux_Monfort_2006} showed the non-consistency of the Beta kernel approach to estimate the recovery rate density 
    when there are point masses at $0$ (total loss) or $1$ (total recovery). Without point masses at $0$ or $1$, the Beta kernel approach features significant bias in finite sample according to the authors. In large sample, the method is consistent, but they showed that competing approaches (called micro-Beta and macro-Beta; these are two types of normalization of the vanilla Beta kernel estimator) can provide more accurate results when estimating the continuous part of the loss-given-default distribution.

    \vspace{0mm}
    \citet{MR2206532} derived the asymptotic behavior of Beta kernel functionals (and Gamma kernel functionals when the support of the target density $f$ is $[0,\infty)$ instead of $[0,1]$) of the form
    \begin{equation}
        \int_A \varphi(x) \big[\hat{f}_{n,b}(x) - f(x)\big] \rd x,
    \end{equation}
    where $\varphi$ is a bounded regular function and $A$ is the support of $f$, by applying a central limit theorem for degenerate $U$-statistics with variable kernel.
    The ideas are similar to those applied by \citet{MR734096} for traditional kernel estimators.

    \vspace{0mm}
    \citet{MR2756441} applied two multiplicative bias correction methods (from \citet{MR585714} and \citet{MR1354232}, respectively) to the two Beta kernel estimators from \citet{MR1718494}, which, under sufficient smoothness conditions on the target density, had an effect of reducing the pointwise bias while the order of magnitude of the pointwise variance stayed the same.
    Under the assumption that the target density is four-times continuously differentiable, the asymptotics of the pointwise bias, pointwise variance, MSE and MISE were found (the optimal rate of convergence was also shown to be $\OO(n^{-8/9})$ for the MISE and MSE inside $(0,1)$, instead of the usual $\OO(n^{-4/5})$).
    Hirukawa also investigated the numerical performance of the Beta and modified Beta kernel estimators of \citet{MR1718494} as well as them under the micro/macro normalizations from \citet{Gourieroux_Monfort_2006}, and all these combinations (except for micro) under the two aforementioned multiplicative bias correction methods.
    For all $14$ combinations that were studied, the bandwidth parameter was selected according to two methods: rule-of-thumb and plug-in.
    He concluded that the estimators corrected for bias under the JLN-method of \citet{MR1354232} had a superior performance compared to the bias-uncorrected estimators.

    \vspace{0mm}
    \citet{MR2568128} generalized the results of \citet{MR1718494,MR1794247} to the multidimensional setting.
    The kernels that they considered were the products of one-dimensional asymmetric kernels (Beta kernels, modified Beta kernels, or Gamma kernels, modified Gamma kernels, local linear kernel; depending on the support of the marginals of the target density).
    Asymptotics of the pointwise bias, pointwise variance and MISE were found.
    The authors also proved the asymptotic normality, uniform strong consistency and the almost-sure convergence of the MISE when the bandwidth parameter $b$ is selected via a least-square cross-validation method.
    The finite sample performance of the estimators were investigated by comparing the mean and standard deviation of integrated squared errors of Monte Carlo samples under various target densities. When the target density is supported on $[0,\infty)^d$, their results showed that the proposed estimators perform almost as well as the traditional Gaussian kernel estimator when there are no boundary problems, and in general, the modified Gamma and local linear estimators dominate the other estimators (i.e., Gamma, and Gaussian with and without the log-transformation).

    \vspace{0mm}
    \citet{MR2598955} showed that the performance of the Beta kernel estimator is very similar to that of the reflection estimator of \citet{MR797636}, which does not have the boundary problem only for densities exhibiting a shoulder condition at the endpoints of the support. For densities not exhibiting a shoulder condition, they showed that the performance of the Beta kernel estimator at the boundary was inferior to that of the well-known boundary kernel estimator, see, e.g., \cite{doi:10.1007/BFb0098489,MR816088,MR1649872,MR1752313} and references therein.

    \vspace{0mm}
    \citet{Bouezmarni_van_Bellegem_2011} introduced the idea of using Beta kernels to estimate the spectral density of long memory time series.
    Their technical report was recently updated and extended to include the case of short memory time series, and published as \cite{MR4148613}.
    The asymptotics of the pointwise bias and variance for the spectral density estimator were obtained, as well as the uniform weak consistency on compacts and the relative weak consistency (i.e., the convergence to one, in probability, of the ratio of the estimator to the target density).
    A cross-validation method was also studied for the selection of the bandwidth parameter $b$ following the general method of \citet{MR819597}.
    The authors show that the estimator has a better boundary behavior than traditional methods.

    \vspace{0mm}
    \citet{MR2775207} considered Beta kernel estimators for the estimation of density functions in $\beta$-H\"older spaces and under $L^p$ losses.
    They showed that the estimator is minimax whenever $0 < \beta \leq 2$ and $1 \leq p < 4$, but not minimax otherwise.
    In particular, this means that Beta kernel estimators are minimax under $L^2$ losses if the target density is twice continuously differentiable (which is the most common assumption in the literature).
    These types of results are unique in the literature on asymmetric kernels; it would be interesting to see to which extent they hold for other asymmetric kernels.
    In \cite{MR3333996}, the same authors constructed a data-driven (also called adaptative) procedure of bandwidth selection, inspired by the method of \citet{MR1147167}, that achieves the minimax rate of convergence without a priori knowledge of the regularity $\beta$ of the target density.
    They found that the procedure was competitive with the more common cross-validation methods, and the numerical computations were significantly faster.

    \vspace{0mm}
    In line with \citet{MR3299088}, \citet{MR3463548} considered an additive bias correction method of \citet{MR448691} and the nonnegative bias correction methods of \citet{MR585714} and \citet{MR1272163}, in the context of Beta kernel density estimators.
    Under the assumption that the target density is four-times continuously differentiable, the asymptotics of the pointwise bias, pointwise variance, MSE and MISE were found (the optimal rate of convergence was also shown to be $\OO(n^{-8/9})$ for the MISE and MSE inside $(0,1)$, instead of the usual $\OO(n^{-4/5})$).
    In particular, the results partially complemented/extended/corrected those in \cite{MR2756441}.
    The finite sample performance of the estimators was compared by computing the average and standard deviation of the integrated squared errors of Monte Carlo samples when the target density is a bimodal mixture of Beta densities.

    \vspace{0mm}
    Various other statistical topics related to Beta kernels are treated, for example, by
    \citet{Charpentier2006phd, 
    MR2416803, 
    Charpentier_Fermanian_Scaillet_2007, 
    doi:10.1109/ICMLC.2007.4370716, 
    MR2578075, 
    Manivong2009master, 
    MR4076244, 
    MR4302589, 
    MR4136585, 
    Hirukawa_Murtazashvili_Prokhorov_2020}. 

    \vspace{0mm}
    When the density of the observations is not supported on the compact interval $[0,1]$, we can always apply a transformation to the data that maps to $[0,1]$ (for example, $x\mapsto 1 / (1 + x)$ maps $[0,\infty)$ to $[0,1]$) and then use Beta kernels.
    Another approach is to use asymmetric kernels that match the support of the target density directly.
    For instance, we have the following six common classes of asymmetric kernels on $[0,\infty)$: Gamma, Inverse Gamma, Inverse Gaussian, Birnbaum Saunders, Log-Normal and Reciprocal Inverse Gaussian.
    Many of the results that are proved for Beta kernels have been extended to these classes (or could be extended without much trouble), so we just list the relevant papers here instead of repeating every point above:
    \begin{itemize}\setlength\itemsep{-0.1em}
        \item Gamma kernels, see, e.g., \cite{MR1794247,MR1910175,MR2179543,MR2137490,MR2206532,MR2454617,MR2568128,MR2756423,MR2606717,MR2595129,MR2801351,MR3130451,doi:10.3182/20130703-3-FR-4038.00086,doi:10.3182/20130619-3-RU-3018.00214,MR3304359,doi:10.3390/econometrics4020028,MR3520774,MR3494026,MR3843043,MR3962366,doi:10.1515/snde-2018-0001,MR4302589,doi:10.1080/02664763.2021.1881456,MR2665093,MR4147689,MR4086907,MR4136585,Ma2019master,doi:10.1007/s10958-021-05325-2};
        \item Inverse Gamma kernels, see, e.g., \cite{MR2903523,MR2929784,MR3174299,MR3544186,MR3648359};
        \item Inverse Gaussian kernels, see, e.g., \cite{MR2053071,MR2179543,MR2229885,MR2606717,doi:10.12988/pms.2014.4616,MR3131281,MR3713468,MR3937087,MR3978111,MR4136585};
        \item Birnbaum-Saunders kernels, see, e.g., \cite{Jin_Kawczak_2003,MR3040246,doi:10.1007/s00477-012-0684-8,MR3131281,MR3456321,MR3754719,MR3819800,MR4096263,MR4244643,Chekkal_Lagha_Zougab_2021};
        \item Log-Normal kernels, see, e.g., \cite{Jin_Kawczak_2003,MR2606717,doi:10.2139/ssrn.2514882,MR3540109,MR3885552};
        \item Reciprocal Inverse Gaussian kernels, see, e.g., \cite{MR2053071,MR2179543,MR3131281};
    \end{itemize}
    Other asymmetric kernel classes have been considered such as the Weibull, but not much theoretical work has been done in those cases.
    Beta kernels and the six kernel classes above are the most common in the literature.

    \vspace{0mm}
    As can be seen from the lists just above, one of the current weaknesses in the theory of asymmetric kernels is the segmentation of the results by the specific form of the kernel. Fortunately, this concern has been addressed in recent years and a theory of so-called associated kernels has been developed (through many articles written by C\'elestin Kokonendji and his co-authors) to unify the theory of asymmetric kernels with the one for traditional kernels in both the univariate and multivariate settings, as well as treating discrete and continuous variants together.
    Associated kernels were developed and studied in the discrete setting first, see, e.g.,
    \cite{MR2549110, 
    MR2511101, 
    MR2834036, 
    MR3169297, 
    MR3215725, 
    MR3337328, 
    MR3474762, 
    MR3499725, 
    MR3567789, 
    MR3566162, 
    MR3547951, 
    MR3453033, 
    MR3807369, 
    MR3760293}, 
    then came the continuous univariate associated kernels, see \citet{MR3885552}, and finally multivariate associated kernels (discrete and continuous), see \citet{MR3760293,Kokonendji_Some_2021}.
    The case of multivariate continuous associated kernels is the only one relevant to us in the present paper, so let us briefly mention the main contributions from \citet{MR3760293,Kokonendji_Some_2021}. In those papers, the definition of associated kernels was extended to the multivariate setting (the explicit definition is given around Equation~\ref{def:associated.kernels} below).
    In particular, the definition is broad enough that it covers all the asymmetric kernel classes mentioned in the paragraph above, as well as various multivariate generalizations treated in the literature and also traditional multivariate kernels (such as the ubiquitous Gaussian kernel, for example).
    In the two papers, the authors showed how associated kernels with a given correlation structure can be constructed using a variant of the mode-dispersion method, see \cite[p.115-116]{MR3760293}, and they derived asymptotic expressions for the pointwise bias and variance of the corresponding smoothing estimator, see \cite[Proposition~2.9]{MR3760293} and \cite[Proposition~2]{Kokonendji_Some_2021}.
    In \cite{MR3760293}, they also showed how to modify the estimator near the boundary to reduce the pointwise bias even more, similarly to the modified Beta kernel estimator introduced by \citet{MR1718494}.
    In \cite{Kokonendji_Some_2021} (see \cite{MR2549110} for the discrete case), a semiparametric approach adapted from \citet{MR1345205} was applied when the associated kernel is parametrized by another (possibly multidimensional) parameter $\bb{\theta}$, and asymptotics for the pointwise bias was also derived in this case. Specific examples are treated in both papers to illustrate the wide applicability of associated kernels to the currently segmented literature on asymmetric kernels and related estimators.
    In Section~\ref{sec:Dirichlet.kernels}, the definition of multivariate associated kernels will be given and we will see that Dirichlet kernels are just a special case. However, most of the asymptotic results in Section~\ref{sec:asymptotic.properties} cannot be deduced from those in \cite{MR3760293,Kokonendji_Some_2021}, so significant work remains to be done.

    \vspace{0mm}
    Various other statistical topics related to asymmetric kernels are treated, for example, in
    \cite{MR2411743, 
    MR2516437, 
    doi:10.1117/12.811578, 
    MR2587103, 
    MR2606717, 
    MR3821525, 
    doi:10.1007/s00138-009-0237-4, 
    doi:10.1109/ICMT.2011.6001976, 
    MR2903382, 
    MR2869005, 
    doi:10.1016/j.jempfin.2012.04.001, 
    MR2975812, 
    Chaubey_Li_2013, 
    MR3175804, 
    MR3178361, 
    MR3139348, 
    MR3256394, 
    MR3178355, 
    MR3200997, 
    MR3384258, 
    MR3299088, 
    MR3357933, 
    MR3350456, 
    arXiv:1512.03188, 
    doi:10.3233/JCM-170731, 
    MR3743306, 
    MR3850066, 
    MR3833873, 
    doi:10.1051/itmconf/20182300037, 
    MR3933005, 
    MR3885552, 
    MR3873902, 
    MR3907679, 
    MR3975162, 
    MR4029144, 
    Mombeni_et_al_2019_accepted, 
    MR4097810, 
    MR3979322, 
    MR4037101, 
    Ercelik_Nadar_2020, 
    doi:10.1007/s10463-020-00772-1, 
    MR4279948, 
    Chekkal_Lagha_Zougab_2021, 
    arXiv:2011.14893, 
    doi:10.1002/sta4.410, 
    arXiv:2104.04882, 
    MR4223858}. 

    \vspace{0mm}
    It should mentioned that Bernstein density estimators, studied theoretically, among other authors, by \citet{MR0397977}, \citet{MR0574548,MR0638651,MR0755576}, \citet{MR0726014}, \citet{MR0791719}, \citet{MR1293514}, \citet{MR1910059}, \citet{MR2068610}, \citet{MR2270097}, \citet{MR2351744}, \citet{MR2662607,MR2925964}, \citet{MR3174309}, \citet{MR3412755}, \citet{MR3474765}, \citet{MR4287788,arXiv:2006.11756,MR4213687}, \citet{Hanebeck2020master}, \citet{doi:10.1007/s10463-020-00783-y} and \citet{MR4130895}, share many of the same asymptotic properties (with proper reparametrization).
    As such, the literature on Bernstein estimators has parallelled that of Beta kernel estimators and other asymmetric kernel estimators in the past twenty years.
    For an overview of the vast literature on Bernstein estimators, we refer the interested reader to \citet{MR4287788}.

\section{Dirichlet kernels: definition and properties}\label{sec:Dirichlet.kernels}

    The $d$-dimensional simplex and its interior are defined by
    \begin{equation}\label{eq:def.simplex}
        \mathcal{S}_d \leqdef \big\{\bb{s}\in [0,1]^d: \|\bb{s}\|_1 \leq 1\big\} \quad \text{and} \quad \mathrm{Int}(\mathcal{S}_d) \leqdef \big\{\bb{s}\in (0,1)^d: \|\bb{s}\|_1 < 1\big\},
    \end{equation}
    where $\|\bb{s}\|_1 \leqdef \sum_{i=1}^d |s_i|$ and $d\in \N$.
    For $\alpha_1, \dots, \alpha_d, \beta > 0$, the density of the $\mathrm{Dirichlet}\hspace{0.2mm}(\bb{\alpha},\beta)$ distribution is
    \begin{equation}\label{eq:Dirichlet.density}
        K_{\bb{\alpha},\beta}(\bb{s}) \leqdef \frac{\Gamma(\|\bb{\alpha}\|_1 + \beta)}{\Gamma(\beta) \prod_{i=1}^d \Gamma(\alpha_i)} \cdot (1 - \|\bb{s}\|_1)^{\beta - 1} \prod_{i=1}^d s_i^{\alpha_i - 1}, \quad \bb{s}\in \mathcal{S}_d.
    \end{equation}
    For a given bandwidth parameter $b > 0$, and a sample of i.i.d.\ observations $\bb{X}_1,\dots,\bb{X}_n$ that are $F$ distributed ($F$ is unknown) with a density $f$ supported on $\mathcal{S}_d$, the Dirichlet kernel estimator is defined by
    \begin{equation}\label{eq:Dirichlet.estimator}
        \hat{f}_{n,b}(\bb{s}) \leqdef \frac{1}{n} \sum_{i=1}^n K_{\bb{s} / b + 1, (1 - \|\bb{s}\|_1) / b + 1}(\bb{X}_i), \quad \bb{s}\in \mathcal{S}_d.
    \end{equation}
    Two smoothing examples are given in Fig.~\ref{fig:smoothing.examples} with $d = 2$, where the two subfigures on the left-hand side illustrate the target densities, and the two subfigures on the right-hand side show the corresponding estimates.
    The reader can also see that the shape of the kernel changes with the position $\bb{s}$ on the simplex; this is in contrast with traditional estimators where the kernel is the same for every point.
    This variable smoothing allows Dirichlet kernel estimators (and more generally, asymmetric kernel estimators) to avoid the boundary bias problem of traditional kernel estimators.

    \begin{figure}[t]
        \vspace{-3mm}
        \centering
        \begin{subfigure}[b]{0.45\textwidth}
            \includegraphics[width=\textwidth]{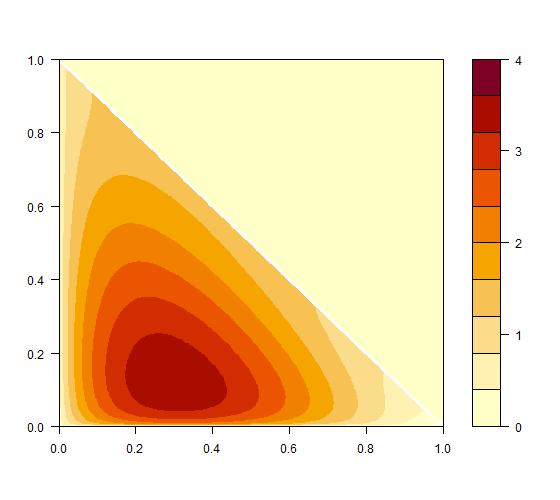}
            \label{fig:target.density.1}\vspace{-7mm}
        \end{subfigure}%
        \hspace{-4.5mm}
        \begin{subfigure}[b]{0.45\textwidth}
            \includegraphics[width=\textwidth]{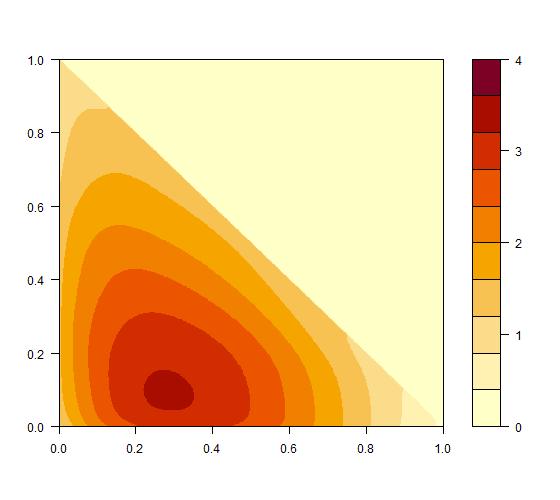}
            \label{fig:smoothing.1}\vspace{-7mm}
        \end{subfigure}%

        \vspace{-4mm}
        \noindent
        \begin{subfigure}[b]{0.45\textwidth}
            \includegraphics[width=\textwidth]{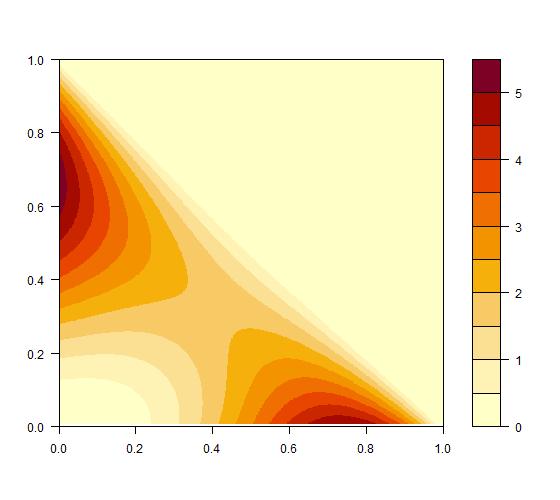}
            \label{fig:target.density.2}
        \end{subfigure}%
        \hspace{-4.5mm}
        \begin{subfigure}[b]{0.45\textwidth}
            \includegraphics[width=\textwidth]{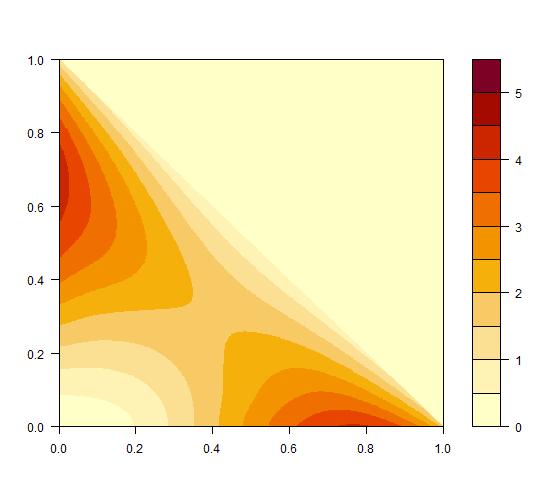}
            \label{fig:smoothing.2}
        \end{subfigure}%

        \vspace{-9mm}
        \noindent
        \caption{Two-dimensional examples of contour plots for two target densities $f$ (left) and their respective estimate $\hat{f}_{n,b}$ (right), using Dirichlet kernels with $n = 10000$ and $b = n^{-1/3}$. The first target density (top left) is the mixture $0.4 \cdot \text{Dirichlet}(1.3,1.6,1)+0.6 \cdot \text{Dirichlet}(1.7,1.2,2.5)$, whereas the second target density (bottom left) is the mixture $0.4 \cdot \text{Dirichlet}(4,1,2) + 0.6 \cdot \text{Dirichlet}(1,3,2)$.}
        \label{fig:smoothing.examples}
    \end{figure}

    \vspace{0mm}
    Note that the estimator $\hat{f}_{n,b}$ is not a proper density since it does not integrate to~$1$ exactly, but it does integrate to~$1$ asymptotically.
    To prove this, denote $\mathrm{Bulk} \leqdef \{\bb{x}\in \mathcal{S}_d : |x_i - r_i| \leq (1/b + d + 2)^{-1/2} b^{-1/6} / 2 ~\forall i\in \{1,\dots,d+1\}\}$ where $r_i \leqdef (s_i + b) / (1 + b(d + 1))$, $x_{d+1}\leqdef 1 - \|\bb{x}\|_1$, $s_{d+1}\leqdef 1 - \|\bb{s}\|_1$, then $\bb{X}_1,\dots,\bb{X}_n\in \mathrm{Bulk}$ with probability $1 - \OO(n \exp(-b^{-1/3}/2))$, as $n\to \infty$, by a straightforward union bound and concentration argument (see \cite[Equation~(21)]{doi:10.1002/sta4.410}), which is $1 - \oo(1)$ as long as $b = \oo((\log n)^{-3})$ (this is a very weak assumption given that $b_{\mathrm{opt}} \asymp n^{-2/(d+4)}$ in Theorem~\ref{thm:MISE.optimal.density} below).
    Therefore, by the multivariate normal approximation for the $\mathrm{Dirichlet}\hspace{0.2mm}(\bb{\alpha} = \bb{s} + b, \beta = 1 - \|\bb{s}\|_1 + b)$ density derived in \cite[Theorem~1]{doi:10.1002/sta4.410}, we have, as $n\to \infty$,
    \begin{align}
        \int_{\mathcal{S}_d} \hat{f}_{n,b}(\bb{s}) \rd \bb{s}
        &= \frac{1}{n} \sum_{i=1}^n \int_{\mathcal{S}_d} K_{\bb{s} / b + 1, (1 - \|\bb{s}\|_1) / b + 1}(\bb{X}_i) \rd \bb{s} = \frac{1}{n} \sum_{i=1}^n \int_{\mathcal{S}_d} K_{\bb{s} / b + 1, (1 - \|\bb{s}\|_1) / b + 1}(\bb{X}_i) \ind_{\mathrm{Bulk}}(\bb{X}_i)\rd \bb{s} + \oo_{\PP}(1) \notag \\
        &= \frac{1}{n} \sum_{i=1}^n \int_{\R^d} \frac{\exp\big(-\frac{1}{2} \bb{\delta}_{\bb{X}_i}^{\top} \Sigma_{\bb{r}}^{-1} \bb{\delta}_{\bb{X}_i}\big)}{\sqrt{(2\pi)^d \frac{(1 - \|\bb{r}\|_1) \prod_{i=1}^d r_i}{(1/b + d + 2)^d}}} \rd \bb{s} + \oo_{\PP}(1) \quad \text{with } \bb{\delta}_{\bb{X}_i} \leqdef \frac{\bb{X}_i - \bb{r}}{(1/b + d + 2)^{-1/2}}, ~~\Sigma_{\bb{r}} \leqdef \mathrm{diag}(\bb{r}) - \bb{r} \bb{r}^{\top} \notag \\
        &= \frac{1}{n} \sum_{i=1}^n \int_{\R^d} \frac{\exp(-\frac{1}{2} \bb{z}^{\top} \bb{z})}{\sqrt{(2\pi)^d}} \rd \bb{z} + \oo_{\PP}(1) = 1 + \oo_{\PP}(1),
    \end{align}
    where the second to last equality follows from the change of variables $\bb{z} = \Sigma_{\bb{r}}^{-1/2} \bb{\delta}_{\bb{X}_i}$ and the fact that the symmetric positive definite matrix $\Sigma_{\bb{r}}$ has determinant $(1 - \|\bb{r}\|_1) \prod_{i=1}^d r_i$ by \citet[Theorem~1]{MR1157720}.

    \newpage
    Under mild regularity conditions, we will prove several asymptotic results for $\hat{f}_{n,b}$ in this paper: pointwise bias and variance, mean squared error (MSE), mean integrated squared error (MISE), mean integrated absolute error (MIAE), uniform strong consistency and asymptotic normality.
    The results are stated in Section~\ref{sec:asymptotic.properties} and proved in Section~\ref{sec:proofs}.

    \vspace{0mm}
    As mentioned in Section~\ref{sec:overview}, Dirichlet kernels are continuous multivariate associated kernels.
    To see this, here is the definition, which can be found in \cite[Definition~2.1]{MR3760293} and \cite[Definition~1]{Kokonendji_Some_2021}.
    In the continuous case, multivariate associated kernels are defined as density functions $K_{\bb{x},H}$, supported on a subset of $[0,\infty)^d$ (denoted by $\mathbb{T}_d^+$), that are parametrized by points $\bb{x}$ in the support and a bandwidth matrix $H$ which is symmetric and positive definite.
    As pointed out in \cite[Table~2.1]{MR3760293}, the matrix $H$ can be full, diagonal or a Scott matrix (i.e., parametrized by a single parameter $h$), and $K_{\bb{x},H}$ must satisfy the following crucial property: For $\mathbfcal{Z}_{\bb{x},H}$ a $d$-dimensional random vector which is $K_{\bb{x},H}$-distributed, then as $H\to 0_{d\times d}^{+}$,
    \begin{equation}\label{def:associated.kernels}
        \EE[\mathbfcal{Z}_{\bb{x},H}] - \bb{x} \reqdef \bb{a}(\bb{x},H) \to \bb{0}_d, \qquad \CC(\mathbfcal{Z}_{\bb{x},H}) \reqdef B(\bb{x},H) \to 0_{d\times d}^{+}.
    \end{equation}
    (This definition extends naturally to the discrete setting, and it is also possible to have a mix of discrete and continuous components for $\mathbfcal{Z}_{\bb{x},H}$.)
    Under this definition, and under relatively weak regularity conditions on the target density $f$ (see \cite[p.167]{Kokonendji_Some_2021}), the asymptotics of the pointwise bias and variance for the corresponding estimator, denoted by $\widetilde{f}_{n,H}(\bb{x}) = n^{-1} \sum_{i=1}^n K_{\bb{x},H}(\bb{X}_i)$, were shown by \citet{MR3760293,Kokonendji_Some_2021} to be, for any given $\bb{x}\in \mathbb{T}_d^+$,
    \begin{equation}\label{eq:asymptotics.bias.variance.Kokonendji.Some}
        \begin{aligned}
            \BB[\widetilde{f}_{n,H}(\bb{x})]
            &= \nabla f(\bb{x})^{\top} \bb{a}(\bb{x},H) + \frac{1}{2} \mathrm{tr}\left\{\mathrm{Hessian}(f)(\bb{x}) \left[B(\bb{x},H) + \bb{a}(\bb{x},H)^{\top} \bb{a}(\bb{x},H)\right]\right\} + \oo\big(\mathrm{tr}(B(\bb{x},H))\big), \\
            \VV(\widetilde{f}_{n,H}(\bb{x}))
            &= n^{-1} f(\bb{x}) \int_{\mathbb{T}_d^+} K_{\bb{x},H}^2(\bb{u}) \rd \bb{u} + \oo_{\bb{x}}\left(\frac{n^{-1}}{(\det(H))^{\hspace{0.2mm}r_{\bb{x}}}}\right), \qquad (\text{$\bb{x}$ must be in the interior of $\mathbb{T}_d^{+}$ here})
        \end{aligned}
    \end{equation}
    with $r_{\bb{x}} \leqdef \sup\{p > 0 : \liminf_{n\to \infty} \int_{\mathbb{T}_d^+} K_{\bb{x},H}^2(\bb{u}) \rd \bb{u} \, (\det(H))^{\hspace{0.3mm}p} > 0\}$.

    \vspace{0mm}
    In order to obtain explicit expressions for the pointwise bias and variance for the Dirichlet kernel estimator (using Equation~\eqref{eq:asymptotics.bias.variance.Kokonendji.Some}), we can estimate the expectation and covariances of the random vector
    \begin{equation}\label{eq:def.xi.Dirichlet.r.v}
        \bb{\xi}_{\bb{s}} = (\xi_1,\dots,\xi_d) \sim \mathrm{Dirichlet}\big(\tfrac{\bb{s}}{b} + 1, \tfrac{(1 - \|\bb{s}\|_1)}{b} + 1\big), \quad \bb{s}\in \mathcal{S}_d.
    \end{equation}
    For all $i,j\in \{1,\dots,d\}$, straightforward calculations yield (see, e.g., \citet[p.39]{MR2830563} for $(\star)$):
    \vspace{-2mm}
    \begin{align}
        \EE[\xi_i]
        &\stackrel{(\star)}{=} \frac{\frac{s_i}{b} + 1}{\frac{1}{b} + d + 1} = \frac{s_i + b}{1 + b(d+1)}
        = s_i + b(1 - (d+1)s_i) + \OO(b^2), \label{eq:expectation.explicit.estimate} \\[1mm]
        \CC(\xi_i,\xi_j)
        &\stackrel{(\star)}{=} \frac{(\frac{s_i}{b} + 1)((\frac{1}{b} + d + 1) \ind_{\{i = j\}} - (\frac{s_j}{b} + 1))}{(\frac{1}{b} + d + 1)^2 (\frac{1}{b} + d + 2)}
        = \frac{b (s_i + b)(\ind_{\{i = j\}} - s_j + b (d + 1) \ind_{\{i = j\}} - b)}{(1 + b(d+1))^2 (1 + b(d+2))} \\
        &= b s_i (\ind_{\{i = j\}} - s_j) + \OO(b^2), \label{eq:covariance.explicit.estimate.first} \\[1mm]
        \EE[(\xi_i - s_i) (\xi_j - s_j)]
        &= \CC(\xi_i,\xi_j) + (\EE[\xi_i] - s_i)(\EE[\xi_j] - s_j)
        = b s_i (\ind_{\{i = j\}} - s_j) + \OO(b^2). \label{eq:covariance.explicit.estimate}
    \end{align}
    This shows that $\hat{f}_{n,b}$ is a multivariate associated kernel supported on $\mathbb{T}_d^+ = \mathcal{S}_d$ according to Definition~2.1 in \cite{MR3760293} (alternatively, Definition~1 in \cite{Kokonendji_Some_2021}).
    (In fact, the expression \eqref{eq:covariance.explicit.estimate} even shows that our estimator $\hat{f}_{n,b}$ could be derived asymptotically, a posteriori, from the mode-dispersion method described in \cite[p.115-116]{MR3760293}.)
    Consequently, under the assumption that $f$ is twice continuously differentiable on $\mathcal{S}_d$, we can get the asymptotics of the pointwise bias and variance from \eqref{eq:asymptotics.bias.variance.Kokonendji.Some} with $\bb{a}(\bb{x}, H) = [b(1 - (d+1)s_i) + \OO(b^2)]_{i=1}^d$ and $B(\bb{x},H) = b \, (\mathrm{diag}(\bb{s}) - \bb{s} \bb{s}^{\top}) + \OO(b^2)$.
    The explicit expression we obtain for the pointwise bias is written in Theorem~\ref{thm:bias.var.density} (it is just a special case of the broader results obtained by \citet{MR3760293,Kokonendji_Some_2021}).
    However, to be clear, the expression for the pointwise variance in Theorem~\ref{thm:bias.var.density} is more precise than the one we obtain from the above argument (the technical bound on the main part of the variance in Lemma~\ref{lem:A.b.x.asymptotics} in particular is necessary to obtain the asymptotics of the MISE rigorously).
    Also, our expression for the pointwise variance in Theorem~\ref{thm:bias.var.density} only assumes that $f$ is Lipschitz continuous instead of twice continuously differentiable, and we even prove the asymptotics near the boundary, which does not follow from the results in \citet{MR3760293,Kokonendji_Some_2021}.

\section{Main results}\label{sec:asymptotic.properties}

For each result in this section, one of the following two assumptions will be used:
\begin{align}
    &\bullet \quad \text{The density $f$ is Lipschitz continuous on $\mathcal{S}_d$.} \label{eq:assump:f.density} \\
    &\bullet \quad \text{The density $f$ is twice continuously differentiable on $\mathcal{S}_d$.} \label{eq:assump:f.density.2}
\end{align}
    Also, here are some notations we will use throughout the rest of the paper. The notation $u = \OO(v)$ means that $\limsup |u / v| < C < \infty$ as $b\to 0$ or $n\to \infty$, depending on the context.
    The positive constant $C$ can depend on the target density $f$ and the dimension $d$, but no other variable unless explicitly written as a subscript. The most common occurrence is a local dependence of the asymptotics with a given point $\bb{s}$ on the simplex, in which case we would write $u = \OO_{\bb{s}}(v)$.
    In a similar fashion, the notation $u = \oo(v)$ means that $\lim |u / v| = 0$ as $b\to 0$ or $n\to \infty$.
    Subscripts indicate which parameters the convergence rate can depend on.
    The symbol $\mathscr{D}$ over an arrow `$\longrightarrow$' will denote the convergence in law (or distribution).
    We will use the shorthand $[d] \leqdef \{1,\dots,d\}$ in several places.
    The bandwidth parameter $b = b(n)$ is always implicitly a function of the number of observations, the only exception being in Lemma~\ref{lem:A.b.x.asymptotics} and its proof.
    Finally, we denote the expectation of $\hat{f}_{n,b}(\bb{s})$ by
    \begin{equation}\label{eq:f.b.star.representation.1}
        f_b(\bb{s}) \leqdef \EE\big[\hat{f}_{n,b}(\bb{s})\big] = \EE[K_{\bb{s} / b + 1, (1 - \|\bb{s}\|_1) / b + 1}(\bb{X})] = \int_{\mathcal{S}_d} f(\bb{x}) K_{\bb{s} / b + 1, (1 - \|\bb{s}\|_1) / b + 1}(\bb{x}) \rd \bb{x}.
    \end{equation}
    Alternatively, notice that if $\bb{\xi}_{\bb{s}}\sim \mathrm{Dirichlet}(\bb{s} / b + 1, (1 - \|\bb{s}\|_1) / b + 1)$, then we also have the representation
    \begin{equation}\label{eq:f.b.star.representation.2}
        f_b(\bb{s}) = \EE[f(\bb{\xi}_{\bb{s}})].
    \end{equation}

The asymptotics of the pointwise bias and variance for Beta kernel estimators were first computed by \citet{MR1718494}.
The theorem below extends this to the multidimensional setting.

\begin{theorem}[Pointwise bias and variance]\label{thm:bias.var.density}
    Assume that \eqref{eq:assump:f.density.2} holds.
    We have, as $n\to \infty$ and uniformly for $\bb{s}\in \mathcal{S}_d$,
    \begin{equation}\label{eq:thm:bias.var.density.eq.bias}
        \BB[\hat{f}_{n,b}(\bb{s})] = f_b(\bb{s}) - f(\bb{s}) = b \, g(\bb{s}) + \oo(b),
    \end{equation}
    where
    \begin{equation}\label{eq:def:b.x}
        g(\bb{s}) \leqdef \sum_{i\in [d]} (1 - (d+1)s_i) \frac{\partial}{\partial s_i} f(\bb{s}) + \frac{1}{2} \sum_{i,j\in [d]} s_i (\ind_{\{i = j\}} - s_j) \frac{\partial^2}{\partial s_i \partial s_j} f(\bb{s}).
    \end{equation}
    Assume that \eqref{eq:assump:f.density} holds instead.
    For every subset of indices $\mathcal{J}\subseteq [d]$, denote
    \begin{equation}\label{eq:def.psi}
        \psi(\bb{s}) \leqdef \psi_{\emptyset}(\bb{s}) \quad \text{and} \quad \psi_{\mathcal{J}}(\bb{s}) \leqdef \bigg[(4\pi)^{d - |\mathcal{J}|} \cdot (1 - \|\bb{s}\|_1) \prod_{i\in [d]\backslash\mathcal{J}} s_i\bigg]^{-1/2}.
    \end{equation}
    Then, for any $\bb{s}\in \mathrm{Int}(\mathcal{S}_d)$, any subset $\emptyset \neq \mathcal{J}\subseteq [d]$, and any $\bb{\kappa}\in (0,\infty)^d$, we have, as $n\to \infty$,
    \begin{equation}\label{eq:thm:bias.var.density.eq.var}
        \VV(\hat{f}_{n,b}(\bb{s})) =
        \begin{cases}
            n^{-1} b^{-d/2} \cdot (\psi(\bb{s}) f(\bb{s}) + \OO_{\bb{s}}(b^{1/2})), &\mbox{if } s_i / b \to \infty ~\forall i\in [d] ~\text{and } (1 - \|\bb{s}\|_1) / b \to \infty, \\[3mm]
            n^{-1} b^{-(d + |\mathcal{J}|)/2} \cdot \Big(\psi_{\mathcal{J}}(\bb{s}) f(\bb{s}) \prod_{i\in \mathcal{J}} \frac{\Gamma(2\kappa_i + 1)}{2^{2\kappa_i + 1} \Gamma^2(\kappa_i + 1)} + \OO_{\bb{\kappa},\bb{s}}(b^{1/2})\Big), &\mbox{if } s_i / b \to \kappa_i ~\forall i\in \mathcal{J}\hspace{-0.5mm}, \, s_i / b\to \infty ~\forall i\in [d]\backslash \mathcal{J} \\[-0.5mm]
            &\text{and } (1 - \|\bb{s}\|_1) / b \to \infty,
        \end{cases}
    \end{equation}
\end{theorem}

This means that the pointwise variance is $\OO_{\bb{s}}(n^{-1} b^{-d/2})$ in the interior of the simplex and it gets multiplied by a factor $b^{-1/2}$ every time we go near the boundary in one of the $d$ dimensions. If we are near an edge of dimension $d - |\mathcal{J}|$, then the pointwise variance is $\OO_{\bb{s}}(n^{-1} b^{-(d + |\mathcal{J}|)/2})$.

\begin{corollary}[Mean squared error]\label{cor:bias.var.implies.MSE.density}
    Assume that \eqref{eq:assump:f.density.2} holds.
    We have, as $n\to \infty$ and for each $\bb{s}\in \mathrm{Int}(\mathcal{S}_d)$,
    \begin{equation}
        \begin{aligned}
            \mathrm{MSE}[\hat{f}_{n,b}(\bb{s})]
            &\leqdef \EE\Big[\big|\hat{f}_{n,b}(\bb{s}) - f(\bb{s})\big|^2\Big] \\
            &= \VV(\hat{f}_{n,b}(\bb{s})) + \big(\BB[\hat{f}_{n,b}(\bb{s})]\big)^2 = n^{-1} b^{-d/2} \psi(\bb{s}) f(\bb{s}) + b^2 g^2(\bb{s}) + \OO_{\bb{s}}(n^{-1} b^{-d/2 + 1/2}) + \oo(b^2).
        \end{aligned}
    \end{equation}
    In particular, if $f(\bb{s}) \cdot g(\bb{s}) \neq 0$, the asymptotically optimal choice of $b$, with respect to $\mathrm{MSE}$, is
    \begin{equation}\label{eq:b.opt.MSE}
        b_{\mathrm{opt}}(\bb{s}) = n^{-2/(d+4)} \left[\frac{d}{4} \cdot \frac{\psi(\bb{s}) f(\bb{s})}{g^2(\bb{s})}\right]^{2/(d+4)},
    \end{equation}
    with
    \begin{equation}
        \begin{aligned}
            \mathrm{MSE}[\hat{f}_{n,b_{\mathrm{opt}}}]
            &= n^{-4 / (d+4)} \left[\frac{1 + \frac{d}{4}}{\big(\frac{d}{4}\big)^{\frac{d}{d+4}}}\right] \frac{\big(\psi(\bb{s}) f(\bb{s})\big)^{4 / (d+4)}}{\big(g^2(\bb{s})\big)^{-d / (d+4)}} + \oo_{\bb{s}}(n^{-4/(d+4)}).
        \end{aligned}
    \end{equation}
    More generally, if $n^{2/(d+4)} \, b\to \lambda$ for some $\lambda > 0$ as $n\to \infty$, then
    \begin{equation}
        \begin{aligned}
            \mathrm{MSE}[\hat{f}_{n,b}(\bb{s})]
            &= n^{-4 / (d+4)} \big[\lambda^{-d/2} \psi(\bb{s}) f(\bb{s}) + \lambda^2 g^2(\bb{s})\big] + \oo_{\bb{s}}(n^{-4/(d+4)}).
        \end{aligned}
    \end{equation}
\end{corollary}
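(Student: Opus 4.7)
The result is essentially a bookkeeping exercise combining Theorem 4.2 (bias and variance) with standard single-variable calculus to optimize a sum of two monomials in $b$. The main subtlety is to verify, at each stage, that the error terms absorb correctly into the claimed $\oo$-remainders.

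\textbf{Step 1: MSE expansion.} I would start from the standard decomposition $\mathrm{MSE}[\hat{f}_{n,b}(\bb{s})] = \VV(\hat{f}_{n,b}(\bb{s})) + (\BB[\hat{f}_{n,b}(\bb{s})])^2$. Since $\bb{s} \in \mathrm{Int}(\mathcal{S})$ is fixed and $b \to 0$, both $s_i / b \to \infty$ for all $i \in [d]$ and $(1 - \|\bb{s}\|_1)/b \to \infty$, so the first branch of \eqref{eq:thm:bias.var.density.eq.var} applies, giving $\VV(\hat{f}_{n,b}(\bb{s})) = n^{-1} b^{-d/2} \psi(\bb{s}) f(\bb{s}) + \OO_{\bb{s}}(n^{-1} b^{-d/2 + 1/2})$. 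Squaring the bias from \eqref{eq:thm:bias.var.density.eq.bias} yields $(\BB[\hat{f}_{n,b}(\bb{s})])^2 = b^2 g^2(\bb{s}) + 2 b g(\bb{s}) \cdot \oo(b) + \oo(b)^2 = b^2 g^2(\bb{s}) + \oo(b^2)$. Adding these two pieces immediately gives the first display.

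\textbf{Step 2: Optimization.} Treat the leading term $\Phi(b) \leqdef n^{-1} b^{-d/2} \psi(\bb{s}) f(\bb{s}) + b^2 g^2(\bb{s})$ as a smooth function of $b > 0$ (here is where $f(\bb{s}) g(\bb{s}) \neq 0$ is needed). Setting $\Phi'(b) = 0$ gives
\begin{equation*}
    -\tfrac{d}{2} n^{-1} b^{-d/2 - 1} \psi(\bb{s}) f(\bb{s}) + 2 b \, g^2(\bb{s}) = 0,
\end{equation*}
which rearranges to $b^{(d+4)/2} = \tfrac{d}{4} n^{-1} \psi(\bb{s}) f(\bb{s}) / g^2(\bb{s})$, yielding \eqref{eq:b.opt.MSE}. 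Plugging back in, the two monomials are proportional with ratio $d/4$, so $\Phi(b_{\mathrm{opt}}) = (1 + d/4) \cdot b_{\mathrm{opt}}^2 \, g^2(\bb{s})$ and substituting the formula for $b_{\mathrm{opt}}$ gives the stated closed form for $\mathrm{MSE}[\hat{f}_{n,b_{\mathrm{opt}}}]$.

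\textbf{Step 3: General scaling $b \asymp n^{-2/(d+4)}$.} Write $b = \lambda n^{-2/(d+4)} (1 + \oo(1))$ and substitute directly into the MSE expansion. The leading terms become $n^{-1} b^{-d/2} \psi(\bb{s}) f(\bb{s}) = n^{-4/(d+4)} \lambda^{-d/2} \psi(\bb{s}) f(\bb{s}) (1 + \oo(1))$ and $b^2 g^2(\bb{s}) = n^{-4/(d+4)} \lambda^2 g^2(\bb{s})(1 + \oo(1))$. The only thing to double-check is that the remainders $\OO_{\bb{s}}(n^{-1} b^{-d/2 + 1/2})$ and $\oo(b^2)$ both become $\oo_{\bb{s}}(n^{-4/(d+4)})$; indeed, $n^{-1} b^{-d/2 + 1/2} \asymp n^{-5/(d+4)}$ which is of smaller order than $n^{-4/(d+4)}$, and $\oo(b^2) = \oo(n^{-4/(d+4)})$ by assumption. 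This completes the proof.

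\textbf{Main obstacle.} There is no serious obstacle here; the whole corollary is a direct consequence of Theorem 4.2 combined with elementary calculus. The only point that requires a moment's care is the verification, done above, that the rate $n^{-1}b^{-d/2+1/2}$ coming from the variance remainder is indeed of smaller order than the leading $n^{-4/(d+4)}$ once $b$ is of the optimal order, so that it can be absorbed into $\oo_{\bb{s}}(n^{-4/(d+4)})$.
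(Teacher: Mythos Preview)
Your proposal is correct and matches the paper's approach: the paper does not give a separate proof of this corollary, treating it as an immediate consequence of Theorem~\ref{thm:bias.var.density} via the bias-variance decomposition plus elementary calculus, exactly as you outline.
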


By integrating the MSE and showing that the contribution coming from points near the boundary is negligible, we obtain the following result.

\begin{theorem}[Mean integrated squared error]\label{thm:MISE.optimal.density}
    Assume that \eqref{eq:assump:f.density.2} holds.
    We have, as $n\to \infty$,
    \begin{equation}\label{def:MISE.density}
        \begin{aligned}
            \mathrm{MISE}[\hat{f}_{n,b}] \leqdef \int_{\mathcal{S}_d} \EE\Big[\big|\hat{f}_{n,b}(\bb{s}) - f(\bb{s})\big|^2\Big] \rd \bb{s} = n^{-1} b^{-d/2} \int_{\mathcal{S}_d} \psi(\bb{s}) f(\bb{s}) \rd \bb{s} + b^2 \int_{\mathcal{S}_d} g^2(\bb{s}) \rd \bb{s} + \oo(n^{-1} b^{-d/2}) + \oo(b^2).
        \end{aligned}
    \end{equation}
    In particular, if $\int_{\mathcal{S}_d} g^2(\bb{s}) \rd \bb{s} > 0$, the asymptotically optimal choice of $b$, with respect to $\mathrm{MISE}$, is
    \begin{equation}\label{eq:b.opt.MISE}
        b_{\mathrm{opt}} = n^{-2/(d+4)} \left[\frac{d}{4} \cdot \frac{\int_{\mathcal{S}_d} \psi(\bb{s}) f(\bb{s}) \rd \bb{s}}{\int_{\mathcal{S}_d} g^2(\bb{s}) \rd \bb{s}}\right]^{2/(d+4)},
    \end{equation}
    with
    \begin{equation}
        \begin{aligned}
            \mathrm{MISE}[\hat{f}_{n,b_{\mathrm{opt}}}]
            &= n^{-4 / (d+4)} \left[\frac{1 + \frac{d}{4}}{\big(\frac{d}{4}\big)^{\frac{d}{d+4}}}\right] \frac{\big(\int_{\mathcal{S}_d} \psi(\bb{s}) f(\bb{s}) \rd \bb{s}\big)^{4 / (d+4)}}{\big(\int_{\mathcal{S}_d} g^2(\bb{s}) \rd \bb{s}\big)^{-d / (d+4)}} + \oo(n^{-4/(d+4)}).
        \end{aligned}
    \end{equation}
    More generally, if $n^{2/(d+4)} \, b \to \lambda$ for some $\lambda > 0$ as $n\to \infty$, then
    \begin{equation}
        \begin{aligned}
            \mathrm{MISE}[\hat{f}_{n,b}]
            &= n^{-4 / (d+4)} \left[\lambda^{-d/2} \int_{\mathcal{S}_d} \psi(\bb{s}) f(\bb{s}) \rd \bb{s} + \lambda^2 \int_{\mathcal{S}_d} g^2(\bb{s}) \rd \bb{s}\right] + \oo(n^{-4/(d+4)}).
        \end{aligned}
    \end{equation}
\end{theorem}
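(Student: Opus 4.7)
My plan is to decompose the MISE into an integrated variance piece and an integrated squared bias piece, analyse each separately, and then optimise over $b$. The bias side is essentially free: Theorem \ref{thm:bias.var.density} gives $\BB[\hat f_{n,b}(\bb{s})]=b\,g(\bb{s})+\oo(b)$ \emph{uniformly} on $\mathcal{S}$, and $g$ is bounded because $f\in C^2(\mathcal{S})$. Squaring and integrating over the compact simplex yields $\int_{\mathcal{S}}\BB^2\,\rd\bb{s}=b^2\int_{\mathcal{S}}g^2(\bb{s})\rd\bb{s}+\oo(b^2)$ with no further work.

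For the variance piece, the difficulty is that the expansion in Theorem \ref{thm:bias.var.density} is pointwise, with an $\OO_{\bb{s}}(b^{1/2})$ remainder, and holds only on the deep interior regime $s_i/b,(1-\|\bb{s}\|_1)/b\to\infty$. I would fix a small $\e\in(0,1/2)$ and split $\mathcal{S}=\mathcal{S}_{b,\e}\sqcup(\mathcal{S}\setminus\mathcal{S}_{b,\e})$, where
\begin{equation*}
\mathcal{S}_{b,\e}\leqdef\big\{\bb{s}\in\mathcal{S}:s_i\geq b^{1-\e}\ \forall i\in[d],\ 1-\|\bb{s}\|_1\geq b^{1-\e}\big\}.
\end{equation*}
On $\mathcal{S}_{b,\e}$ the condition $s_i/b\geq b^{-\e}\to\infty$ (and similarly for the slack) holds uniformly, so by revisiting the proof of Theorem \ref{thm:bias.var.density} one obtains a \emph{uniform} version
\begin{equation*}
\VV(\hat f_{n,b}(\bb{s}))=n^{-1}b^{-d/2}\psi(\bb{s})f(\bb{s})+\OO(n^{-1}b^{-d/2+\e/2}),\qquad\bb{s}\in\mathcal{S}_{b,\e},
\end{equation*}
which integrates to $n^{-1}b^{-d/2}\int_{\mathcal{S}_{b,\e}}\psi f\,\rd\bb{s}+\oo(n^{-1}b^{-d/2})$. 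Since $\psi$ is integrable on $\mathcal{S}$ (it is, up to a constant, a $\mathrm{Dirichlet}(1/2,\ldots,1/2)$ density) and $f$ is bounded, dominated convergence gives $\int_{\mathcal{S}_{b,\e}}\psi f\,\rd\bb{s}\to\int_{\mathcal{S}}\psi f\,\rd\bb{s}$.

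The main obstacle is the leftover layer $\mathcal{S}\setminus\mathcal{S}_{b,\e}$, which splits into $2^{d+1}-1$ pieces indexed by the subset $\mathcal{J}\subseteq[d]$ of coordinates (possibly together with the slack) that fall below $b^{1-\e}$. On each such piece the variance admits a worst-case upper bound of order $n^{-1}b^{-(d+|\mathcal{J}|)/2}$ from the second case of \eqref{eq:thm:bias.var.density.eq.var} (and its analogue near the slanted face $\|\bb{s}\|_1=1$, proved symmetrically). Combined with the volume bound $\OO(b^{|\mathcal{J}|(1-\e)})$ for each such piece, the resulting contribution is $\OO(n^{-1}b^{-d/2+|\mathcal{J}|(1/2-\e)})=\oo(n^{-1}b^{-d/2})$ as long as $\e<1/2$. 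The squared bias over the same thin layer is trivially $\OO(b^2\cdot b^{1-\e})=\oo(b^2)$.

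Assembling the three displays gives the announced asymptotic expansion for $\mathrm{MISE}[\hat f_{n,b}]$. The remaining assertions are purely analytic: if $n^{2/(d+4)}b\to\lambda$, substitute to obtain the displayed rate, and minimise $\lambda\mapsto\lambda^{-d/2}A+\lambda^2 B$ (with $A=\int_{\mathcal{S}}\psi f$ and $B=\int_{\mathcal{S}}g^2$) by setting the derivative to zero, which yields $\lambda=(dA/(4B))^{2/(d+4)}$ and hence $b_{\mathrm{opt}}$ and $\mathrm{MISE}[\hat f_{n,b_{\mathrm{opt}}}]$ as stated. The hardest step in practice will be tracking the uniformity of the $\OO_{\bb{s}}(b^{1/2})$ remainder through the proof of Theorem \ref{thm:bias.var.density} on $\mathcal{S}_{b,\e}$, together with the bookkeeping of all boundary strata.
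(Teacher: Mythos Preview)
Your overall decomposition into integrated bias and integrated variance is correct, and the bias side is handled exactly as the paper does it. For the variance, however, your boundary argument has a genuine gap, and the paper's route avoids the entire interior/boundary split.

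The gap is in your claim that on each boundary stratum (say $s_i<b^{1-\e}$ for $i\in\mathcal{J}$) the variance is uniformly $\OO(n^{-1}b^{-(d+|\mathcal{J}|)/2})$, citing the second case of \eqref{eq:thm:bias.var.density.eq.var}. That second case is an asymptotic statement as $s_i/b\to\kappa_i$ for \emph{fixed} $\bb{\kappa}$, and the error term $\OO_{\bb{\kappa},\bb{s}}(b^{1/2})$ depends on $\bb{\kappa}$. On your stratum, the ratios $s_i/b$ range all the way from $0$ up to $b^{-\e}\to\infty$, so no single $\bb{\kappa}$ applies and the stated bound does not follow. Nor does your volume-times-sup-bound calculus give the right exponent even heuristically: what you actually need to integrate near a face is the blow-up $\psi(\bb{s})\asymp\prod_i s_i^{-1/2}$, and it is the $\int_0^{b^{1-\e}}s_i^{-1/2}\rd s_i=\OO(b^{(1-\e)/2})$ smallness that saves you, not the raw volume $\OO(b^{1-\e})$.

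The paper sidesteps all of this. Lemma~\ref{lem:A.b.x.asymptotics} proves the \emph{uniform} inequality
\[
0\le b^{d/2}A_b(\bb{s})\le \psi(\bb{s})\,(1+\OO(b)),\qquad \bb{s}\in\mathcal{S},
\]
together with the pointwise convergence $b^{d/2}A_b(\bb{s})\to\psi(\bb{s})$ on $\mathrm{Int}(\mathcal{S})$. Since $\psi$ is integrable on $\mathcal{S}$ and $f$ is bounded, dominated convergence applied directly to the representation $\VV(\hat f_{n,b}(\bb{s}))=n^{-1}A_b(\bb{s})(f(\bb{s})+\OO(b^{1/2}))-\OO(n^{-1})$ from \eqref{eq:thm:bias.var.density.variance.expression} yields
\[
\int_{\mathcal{S}}\VV(\hat f_{n,b}(\bb{s}))\,\rd\bb{s}=n^{-1}b^{-d/2}\int_{\mathcal{S}}\psi(\bb{s})f(\bb{s})\,\rd\bb{s}+\oo(n^{-1}b^{-d/2})
\]
in one stroke, with no boundary bookkeeping and no uniformisation of the $\OO_{\bb{s}}$ remainder. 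If you want to rescue your stratified argument, replace the appeal to the second case of \eqref{eq:thm:bias.var.density.eq.var} by the uniform bound \eqref{eq:lem:A.b.x.asymptotics.first.bound}; but at that point the DCT approach is strictly shorter. Your treatment of the optimisation in $b$ at the end is correct.
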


The following theorem is the analogue of the $L^1$ asymptotics first proved for traditional univariate kernel estimators by \citet{MR955204}, and then extended to the multivariate setting by \citet{MR1118245}.
In the context of Beta kernels, the result can be found in Theorem~1 of \cite{MR1985506}.
As pointed out by \citet[Section~2.3.2]{MR3329609}, the MIAE enjoys many advantages over the MISE.
It puts more emphasis on the tails of the target density, it is a dimensionless quantity, it is invariant to monotone changes of scale, and it is uniformly bounded (by $2$).
For a thorough study of the $L^1$ point of view in the kernel smoothing theory, we refer the reader to \citet{MR780746}.

\begin{theorem}[Mean integrated absolute error]\label{thm:asymptotic.L1.bound}
    Assume that \eqref{eq:assump:f.density.2} holds.
    We have, as $n\to \infty$,
    \begin{equation}\label{eq:L1.asymp}
        \begin{aligned}
            \mathrm{MIAE}[\hat{f}_{n,b}]
            &\leqdef \int_{\mathcal{S}_d} \EE\big|\hat{f}_{n,b}(\bb{s}) - f(\bb{s})\big| \rd \bb{s} \\
            &= \int_{\mathcal{S}_d} w(\bb{s}) \, \EE\hspace{0.3mm}\Bigg[\bigg|Z - \frac{b \, g(\bb{s})}{w(\bb{s})}\bigg|\Bigg] \rd \bb{s} + \OO(n^{-1} b^{-d/2}) + \oo(n^{-1/2} b^{-d/4}) + \oo(b),
        \end{aligned}
    \end{equation}
    where $w(\bb{s}) \leqdef n^{-1/2} b^{-d/4} \sqrt{\psi(\bb{s}) f(\bb{s})}$, $\psi$ and $g$ are defined in \eqref{eq:def.psi} and \eqref{eq:def:b.x}, and $Z\sim \mathcal{N}(0,1)$.
    If $n^{1/2} b^{\hspace{0.2mm}d/4}\to \infty$, then we have the bound
    \begin{equation}\label{eq:L1.asymp.bound}
        \begin{aligned}
            \mathrm{MIAE}[\hat{f}_{n,b}]
            &\leq n^{-1/2} b^{-d/4} \sqrt{\frac{2}{\pi}} \int_{\mathcal{S}_d} \sqrt{\psi(\bb{s}) f(\bb{s})} \rd \bb{s} + b \int_{\mathcal{S}_d} |g(\bb{s})| \rd \bb{s} + \oo(n^{-1/2} b^{-d/4}) + \oo(b),
        \end{aligned}
    \end{equation}
    In particular, if $\int_{\mathcal{S}_d} |g(\bb{s})| \rd \bb{s} > 0$, the asymptotically optimal choice of $b$, with respect to the mean integrated absolute error bound \eqref{eq:L1.asymp.bound}, is
    \begin{equation}\label{eq:b.opt.L1}
        b_{\mathrm{opt}} = n^{-2/(d+4)} \left[\frac{d}{4} \sqrt{\frac{2}{\pi}} \cdot \frac{\int_{\mathcal{S}_d} \sqrt{\psi(\bb{s}) f(\bb{s})} \rd \bb{s}}{\int_{\mathcal{S}_d} |g(\bb{s})| \rd \bb{s}}\right]^{4/(d+4)},
    \end{equation}
    with
    \begin{equation}
        \begin{aligned}
            \mathrm{MIAE}[\hat{f}_{n,b_{\mathrm{opt}}}]
            &\leq n^{-2 / (d+4)} \left[\frac{1 + \frac{d}{4}}{\big(\frac{d}{4}\big)^{\frac{d}{d+4}}}\right] \frac{\big(\sqrt{\frac{2}{\pi}} \int_{\mathcal{S}_d} \sqrt{\psi(\bb{s}) f(\bb{s})} \rd \bb{s}\big)^{4 / (d+4)}}{\big(\int_{\mathcal{S}_d} |g(\bb{s})| \rd \bb{s}\big)^{-d / (d+4)}} + \oo(n^{-2/(d+4)}).
        \end{aligned}
    \end{equation}
    More generally, if $n^{2/(d+4)} \, b \to \lambda$ for some $\lambda > 0$ as $n\to \infty$, then
    \begin{equation}
        \begin{aligned}
            \mathrm{MIAE}[\hat{f}_{n,b}]
            &\leq n^{-2 / (d+4)} \left[\lambda^{-d/4} \int_{\mathcal{S}_d} \sqrt{\psi(\bb{s}) f(\bb{s})} \rd \bb{s} + \lambda \int_{\mathcal{S}_d} |g(\bb{s})| \rd \bb{s}\right] + \oo(n^{-2/(d+4)}).
        \end{aligned}
    \end{equation}
\end{theorem}

The uniform strong consistency was proved for traditional multivariate kernel estimators by \citet{MR859633} and for Beta kernel estimators by \citet{MR1985506}.
In order to keep a control on the partial derivatives of the Dirichlet density $K_{\bb{\alpha},\beta}$ with respect to the parameters $\alpha_1,\dots,\alpha_d,\beta$ in our proof, we add a small buffer to the boundary that goes to zero as $n\to \infty$.
Our proof is completely different from the proof of the case $d = 1$ by \citet{MR1985506} (it is not clear how to generalize it) and instead relies on a novel chaining argument.

\begin{theorem}[Uniform strong consistency]\label{thm:Theorem.3.1.Babu.Canty.Chaubey}
    Assume that \eqref{eq:assump:f.density} holds.
    We have, as $n\to \infty$,
    \begin{equation}\label{eq:thm:Theorem.3.1.Babu.Canty.Chaubey}
        \sup_{\bb{s}\in \mathcal{S}_d} |f_b(\bb{s}) - f(\bb{s})| = \OO(b^{1/2}).
    \end{equation}
    Furthermore, for $\delta > 0$, define
    \begin{equation}\label{eq:def.S.delta}
        \mathcal{S}_d(\delta) \leqdef \big\{\bb{s}\in \mathcal{S}_d: 1 - \|\bb{s}\|_1 \geq \delta ~\text{and}~ s_i \geq \delta \, \, \forall i\in [d]\big\}.
    \end{equation}
    Then, if $b^{-d} \leq n$ as $n\to \infty$, we have
    \begin{equation}
        \sup_{\bb{s}\in \mathcal{S}_d(b d)} |\hat{f}_{n,b}(\bb{s}) - f(\bb{s})| = \OO\left(\frac{|\log b| (\log n)^{3/2}}{b^{\hspace{0.2mm}d + 1/2} \sqrt{n}}\right) + \OO(b^{1/2}), \quad \text{a.s.}
    \end{equation}
    In particular, if $|\log b|^2 \, b^{-2d - 1} = \oo(n / (\log n)^3)$ as $n\to \infty$, then
    \begin{equation}
        \sup_{\bb{s}\in \mathcal{S}_d(b d)} |\hat{f}_{n,b}(\bb{s}) - f(\bb{s})|\to 0, \quad \text{a.s.}
    \end{equation}
\end{theorem}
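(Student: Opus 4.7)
The plan is to split $\hat f_{n,b}(\bb s) - f(\bb s) = \bigl(\hat f_{n,b}(\bb s) - f_b(\bb s)\bigr) + \bigl(f_b(\bb s) - f(\bb s)\bigr)$ and handle the bias and stochastic pieces separately. For the bias part, I would use the representation \eqref{eq:f.b.star.representation.2}: if $\bb\xi_{\bb s}\sim\mathrm{Dirichlet}(\bb s/b + 1, (1-\|\bb s\|_1)/b + 1)$, then by Lipschitz continuity of $f$, $|f_b(\bb s) - f(\bb s)| \leq L\,\EE\|\bb\xi_{\bb s}-\bb s\|_1 \leq L\sum_i\bigl(|\EE\xi_{\bb s,i}-s_i| + \sqrt{\VV(\xi_{\bb s,i})}\bigr)$. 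A direct computation using the standard Dirichlet moment formulas gives $|\EE\xi_{\bb s,i}-s_i|=\OO(b)$ and $\VV(\xi_{\bb s,i})=\OO(b)$ uniformly in $\bb s\in\mathcal{S}$, yielding $\sup_{\bb s\in\mathcal S}|f_b(\bb s)-f(\bb s)|=\OO(b^{1/2})$.

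For the stochastic part I would execute a chaining argument. Writing $Y_i(\bb s) = K_{\bb s/b+1,\,(1-\|\bb s\|_1)/b+1}(\bb X_i)$, Stirling's formula applied to the Dirichlet density at its mode gives the $L^\infty$ bound $\|Y_i(\bb s)\|_\infty \leq C\,b^{-d-1/2}$ valid on $\mathcal{S}_{bd}$ (since $\alpha_j\geq d+1$ and $\beta\geq d+1$ there). Pointwise Hoeffding then yields $\PP(|\hat f_{n,b}(\bb s)-f_b(\bb s)|>t)\leq 2\exp(-c\,nb^{2d+1}t^2)$. Next I would place an $\eta$-net $\mathcal{N}_\eta\subset\mathcal{S}_{bd}$ of cardinality $\OO(\eta^{-d})$ and union-bound over $\mathcal{N}_\eta$, obtaining $\sup_{\bb s\in\mathcal{N}_\eta}|\hat f_{n,b}(\bb s)-f_b(\bb s)|=\OO\bigl(b^{-d-1/2}\sqrt{(\log\eta^{-1}+\log n)/n}\bigr)$ on an event of probability $\geq 1-n^{-2}$.

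To transfer this to the full supremum I need to control increments inside each cell. Using the identity
\begin{equation*}
\frac{\partial}{\partial s_j}\log K_{\bb s/b+1,\,(1-\|\bb s\|_1)/b+1}(\bb x) \;=\; b^{-1}\Bigl[\psi_0(\beta)-\psi_0(\alpha_j)+\log\frac{x_j}{1-\|\bb x\|_1}\Bigr],
\end{equation*}
where $\psi_0$ is the digamma function, the fundamental theorem of calculus applied along the segment from $\bb s$ to $\bb s'$ reduces the increment $|Y_i(\bb s)-Y_i(\bb s')|$ to a line integral of $K$ times the bracketed quantity, divided by $b$. On $\mathcal{S}_{bd}$ the digamma contributions are bounded (here the buffer is essential), but $\log x_j$ and $\log(1-\|\bb x\|_1)$ are unbounded in $\bb x$. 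I would therefore truncate at $x_j\geq b^{2d}$, bounding the complementary region crudely by $\|Y_i\|_\infty$ times the small mass of that region (controlled by the bounded density $f$), and handle the truncated region by the uniform estimate $|\log x_j|\lesssim |\log b|$. After choosing $\eta$ polynomially small in $(n,b)$, the resulting increment bound is of lower order than the net contribution, at the cost of the extra $|\log b|(\log n)^{1/2}$ factors in the statement. A final Borel--Cantelli argument using the summability of $n^{-2}$ promotes the tail bound to the almost-sure rate.

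The main obstacle is precisely the third step: the Dirichlet kernel, viewed as a function of its parameters, has a logarithmic derivative that blows up as $x_j\to 0$ or $1-\|\bb x\|_1\to 0$, regardless of how deep inside $\mathcal S$ the point $\bb s$ sits. A direct Lipschitz-in-$\bb s$ bound on $Y_i$ is thus unavailable. The novelty is to cleverly interleave the scale of the $\eta$-net with a truncation scale in $\bb x$, absorbing the unbounded log-terms against the $L^\infty$ bound on $K$ and the mass of the truncated region. Balancing these three scales — the net, the $\bb x$-truncation, and the uniform kernel bound — is the delicate step that drives the $|\log b|(\log n)^{3/2}/(b^{d+1/2}\sqrt n)$ rate, and is what makes the generalization to $d\geq 2$ (where the integration-by-parts trick of \cite{MR1985506} is unavailable) possible.
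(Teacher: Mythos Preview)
Your bias argument matches the paper's exactly (eq.~(5.44)). Your stochastic argument shares the paper's two essential ingredients --- the $L^\infty$ bound on the Dirichlet kernel (the paper's Lemma~5.2) and the truncation of the observations $\bb X_i$ away from $\partial\mathcal S$ to tame the blow-up of $\partial_{\alpha_j}\log K$ --- but organizes them differently. You use a \emph{single} fine $\eta$-net and then a \emph{deterministic} Lipschitz bound on the increments inside each cell (good observations) plus a crude bound on the bad ones. The paper instead covers $\mathcal S_{bd}$ by hypercubes of width $2b$ and runs a genuine \emph{multi-scale dyadic chaining} inside each cube (Proposition~5.5, term~(C)), applying Azuma at every level $k$ with the increment bound from Lemma~5.4; the truncation parameter $\delta$ is not a fixed power of $b$ but is chosen implicitly by $\delta|\log\delta|=b^{d+1/2}a/(\|f\|_\infty|\log b|)$, precisely so that all three contributions balance. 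Your single-scale route is simpler and, if tuned carefully, would even shave a $|\log b|\cdot\log n$ off the stated rate; the paper's chaining is heavier but is the mechanism it advertises as the novel contribution.

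One point in your sketch is not yet a proof. When you write ``bounding the complementary region crudely by $\|Y_i\|_\infty$ times the small mass of that region'', you are bounding the \emph{expected} fraction of bad observations. What you actually need is that the \emph{random} count $\#\{i:\bb X_i\notin\mathcal S_{\delta'}\}/n$ is small with summable tail probability; the paper isolates this explicitly as term~(A) and handles it by Hoeffding on indicators. With your specific choice $\delta'=b^{2d}$, the mean $p\asymp b^{2d}$ may satisfy $np\to 0$ under the sole hypothesis $b^{-d}\le n$, so concentration around $p$ fails; you would need to bound the bad fraction by something like $C\log n/n$ (via a Chernoff/Bernstein tail in the regime $t\gg p$) rather than by $2p$, or else pick $\delta'$ adaptively as the paper does. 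This is a repairable detail, not a structural flaw, but it is exactly the ``delicate balancing of scales'' you mention, and your current choice of scales does not close.
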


\newpage
A straightforward verification of the Lindeberg condition for double arrays yields the asymptotic normality.
This result was never proved even for Beta kernel estimators.

\begin{theorem}[Asymptotic normality]\label{thm:Theorem.3.2.and.3.3.Babu.Canty.Chaubey}
    Assume that \eqref{eq:assump:f.density} holds.
    Let $\bb{s}\in \mathrm{Int}(\mathcal{S}_d)$ be such that $f(\bb{s}) > 0$.
    If $n^{1/2} b^{\hspace{0.2mm}d/4}\to \infty$ as $n\to \infty$ and $b\to 0$, then
    \begin{equation}\label{eq:thm:Theorem.3.2.and.3.3.Babu.Canty.Chaubey.Prop.1}
        n^{1/2} b^{\hspace{0.2mm}d/4} (\hat{f}_{n,b}(\bb{s}) - f_b(\bb{s})) \stackrel{\mathscr{D}}{\longrightarrow} \mathcal{N}(0,\psi(\bb{s}) f(\bb{s})).
    \end{equation}
    If we also have $n^{1/2} b^{\hspace{0.2mm}d/4 + 1/2}\to 0$ as $n\to \infty$ and $b\to 0$, then \eqref{eq:thm:Theorem.3.1.Babu.Canty.Chaubey} of Theorem~\ref{thm:Theorem.3.1.Babu.Canty.Chaubey} implies
    \begin{equation}
        n^{1/2} b^{\hspace{0.2mm}d/4} (\hat{f}_{n,b}(\bb{s}) - f(\bb{s})) \stackrel{\mathscr{D}}{\longrightarrow} \mathcal{N}(0,\psi(\bb{s}) f(\bb{s})).
    \end{equation}
    Independently of the above rates for $n$ and $b$, if we assume \eqref{eq:assump:f.density.2} instead and $n^{2/(d+4)} \, b \to \lambda$ for some $\lambda > 0$ as $n\to \infty$ and $b\to 0$, then the pointwise bias result in Theorem~\ref{thm:bias.var.density} implies
    \begin{equation}\label{eq:thm:Theorem.3.2.and.3.3.Babu.Canty.Chaubey.Thm.3.3}
        n^{2 / (d+4)} (\hat{f}_{n,b}(\bb{s}) - f(\bb{s})) \stackrel{\mathscr{D}}{\longrightarrow} \mathcal{N}(\lambda \, g(\bb{s}), \lambda^{-d/2} \psi(\bb{s}) f(\bb{s})).
    \end{equation}
\end{theorem}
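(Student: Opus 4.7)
The plan is to recast \eqref{eq:thm:Theorem.3.2.and.3.3.Babu.Canty.Chaubey.Prop.1} as a central limit theorem for a triangular array of i.i.d.\ summands, and then deduce the other two conclusions via Slutsky's theorem using the bias estimates already supplied by Theorem \ref{thm:Theorem.3.1.Babu.Canty.Chaubey} and Proposition \ref{prop:uniform.strong.consistency.density}. Set $Y_{n,i} \leqdef K_{\bb{s}/b+1,(1-\|\bb{s}\|_1)/b+1}(\bb{X}_i)$ and $Z_{n,i} \leqdef n^{-1/2} b^{d/4}(Y_{n,i}-\EE Y_{n,i})$, so that
\[
n^{1/2} b^{d/4}(\hat{f}_{n,b}(\bb{s}) - f_b(\bb{s})) = \sum_{i=1}^{n} Z_{n,i},
\]
with the $Z_{n,i}$ i.i.d.\ and centered for each $n$. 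Because $\bb{s}\in \mathrm{Int}(\mathcal{S})$ is fixed and $b\to 0$, the first case of \eqref{eq:thm:bias.var.density.eq.var} applies and yields the variance normalization $\sum_{i=1}^{n}\VV(Z_{n,i}) = b^{d/2}\cdot n\VV(\hat{f}_{n,b}(\bb{s})) = \psi(\bb{s})f(\bb{s}) + \OO_{\bb{s}}(b^{1/2}) \to \psi(\bb{s})f(\bb{s})$.

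To verify the Lyapunov condition, I would first establish the sup-norm estimate $\|K_{\bb{s}/b+1,(1-\|\bb{s}\|_1)/b+1}\|_{\infty} = \OO_{\bb{s}}(b^{-d/2})$: all parameters of this Dirichlet density are at least $1$, so the maximum is attained at the interior mode (a short calculation places it at $\bb{s}$ itself), and applying Stirling's formula to the normalizing constant--the same calculation that produces the factor $\psi(\bb{s})$ in the variance--yields the claimed bound. For any $\delta>0$, this gives $\EE[|Y_{n,1}|^{2+\delta}] \leq \|K\|_{\infty}^{1+\delta}\,\EE[Y_{n,1}] = \OO_{\bb{s}}(b^{-(1+\delta)d/2})$ since $\EE[Y_{n,1}] = f_b(\bb{s})$ is bounded, so
\[
\sum_{i=1}^{n} \EE\bigl[|Z_{n,i}|^{2+\delta}\bigr] \leq C_{\bb{s}}\, n^{-\delta/2} b^{-\delta d/4} = C_{\bb{s}}\,(n^{1/2}b^{d/4})^{-\delta} \longrightarrow 0,
\]
by the hypothesis $n^{1/2}b^{d/4}\to \infty$. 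Combined with the variance convergence, the Lindeberg-Feller CLT for triangular arrays produces \eqref{eq:thm:Theorem.3.2.and.3.3.Babu.Canty.Chaubey.Prop.1}.

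The remaining two conclusions follow by Slutsky applied to the decomposition $\hat{f}_{n,b}(\bb{s}) - f(\bb{s}) = (\hat{f}_{n,b}(\bb{s})-f_b(\bb{s})) + (f_b(\bb{s})-f(\bb{s}))$. Under \eqref{eq:assump:f.density}, \eqref{eq:thm:Theorem.3.1.Babu.Canty.Chaubey} gives $|f_b(\bb{s}) - f(\bb{s})| = \OO(b^{1/2})$, so when $n^{1/2}b^{d/4+1/2}\to 0$ the scaled bias $n^{1/2}b^{d/4}(f_b - f)$ is $\oo(1)$, yielding the second assertion. Under the stronger \eqref{eq:assump:f.density.2}, Proposition \ref{prop:uniform.strong.consistency.density} gives $f_b(\bb{s})-f(\bb{s}) = b\,g(\bb{s})+\oo(b)$; the regime $n^{2/(d+4)}b\to\lambda$ forces both $n^{1/2}b^{d/4}=\lambda^{d/4}n^{2/(d+4)}(1+\oo(1))\to\infty$ (so the first conclusion applies, and rescaling by $\lambda^{-d/4}$ converts the asymptotic variance into $\lambda^{-d/2}\psi(\bb{s})f(\bb{s})$) and $n^{2/(d+4)}(f_b-f)\to \lambda g(\bb{s})$, producing \eqref{eq:thm:Theorem.3.2.and.3.3.Babu.Canty.Chaubey.Thm.3.3}.

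The main obstacle I expect is the sup-norm bound for the Dirichlet density with parameters growing like $1/b$; it is the only ingredient not already supplied by the preceding theorems. An alternative that avoids the sup-norm entirely is to compute $\EE[K^{2+\delta}(\bb{X})]$ by recognizing $K^{2+\delta}$ as proportional to another Dirichlet density and integrating, which again reduces the problem to a ratio of $\Gamma$-functions of order $b^{-(1+\delta)d/2}$ via Stirling.
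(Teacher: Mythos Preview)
Your proposal is correct and follows essentially the same route as the paper: both set up the triangular array $Y_{i,b}(\bb{s})$, invoke the variance asymptotics from Theorem~\ref{thm:bias.var.density}, establish the sup-norm bound $\|K_{\bb{s}/b+1,(1-\|\bb{s}\|_1)/b+1}\|_{\infty}=\OO_{\bb{s}}(b^{-d/2})$ (which the paper records as Lemma~\ref{lem:Dirichlet.density.bound}), and then deduce the second and third assertions by Slutsky. The only cosmetic difference is that the paper verifies Lindeberg directly---the uniform bound forces the indicator $\ind_{\{|Y_{1,b}|>\e n^{1/2}s_b\}}$ to vanish for large $n$---whereas you go through the Lyapunov moment condition; both collapse to the same requirement $n^{1/2}b^{d/4}\to\infty$.
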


\begin{remark}
    The rate of convergence for the traditional $d$-dimensional kernel density estimator with i.i.d.\ data and bandwidth $h$ is $\OO_{\bb{s}}(n^{-1/2} h^{-d/2})$ in Theorem~3.1.15 of \citet{MR0740865}, whereas $\hat{f}_{n,b}$ converges at a rate of $\OO_{\bb{s}}(n^{-1/2} b^{-d/4})$.
    Hence, the relation between the bandwidth of $\hat{f}_{n,b}$ and the bandwidth of the traditional multivariate kernel density estimator is $b \approx h^2$.
\end{remark}

\section{Case study: minerals processing}\label{sec:case.study}

Minerals processing is a branch of engineering that deals with the design and optimization of systems for beneficiation of valuable minerals out of ore rock materials. These ores are first comminuted to generate simple particles (formed by the least number of minerals possible, ideally, only one mineral), and then subjected to physical/physico-chemical separation processes. Here, an input stream of mixed particles (or feed) is separated into two or more product streams with, hopefully, purer composition. These processes can be studied at the particle level, in which case one attributes (or estimates) for each particle its probability of landing on each one of the possible output streams, as a function of one or more of its properties: for two output streams and one single property, these functions are called Tromp curves \cite{Tromp_1937}. \citet{doi:10.1016/j.mineng.2019.03.026} generalized the tool in a nonparametric way to two properties $(X,Y)$, by estimating for each output stream $i$ the probability density function $\hat{f}_{i;b_x, b_y}(x,y)$ using a bivariate kernel with bandwidth $(b_x,b_y)$, and then forcing at each point $(x,y)$ the set of estimated densities to sum to one. The procedure is essentially equivalent to linear discriminant analysis where the hypothesis of normality of the covariables is superseded by using kernel density estimates.

\vspace{0mm}
Kernel density estimates on the simplex allow the obvious generalization of this idea to obtain discrimination rules (in general) and multidimensional Tromp maps (in this specific case) for compositional covariables. To illustrate the idea, we use freely available particle data from an Apatite flotation experiment \cite{doi:10.14278/rodare.543}, for which we have available information about 2,825,712 particles, split into two output streams (the value stream and the waste stream). Flotation is a powerful physico-chemical concentration process that is particularly sensitive to the surface composition (i.e., the proportion of each mineral on the surface of a particle). Out of the 25 minerals considered, we formed 5 groups depending on their characteristics and process behaviour in contrast with the value mineral (see \citet{doi:10.1016/j.mineng.2021.107054} for details): ``Apatite'' (the value mineral), ``semi-soluble salts'' (with a similar chemical behavior than Apatite), ``phyllosilicates'' or sheet silicates (with a specific hydrodynamic behaviour), ``other silicates'' (waste, non-floaters), and ``other minerals'' (mostly sulfides, fast floating minerals often polluting the value stream).

\vspace{0mm}
Due to the preliminary comminution step, most of these particles are monomineralic ($>86\%$) or bimineralic ($>98\%$). Three or less minerals show $>99.8\%$ of the particles. Shortly, most of these compositions are plagued with zeros, making it an ideal case for Dirichlet kernel methods (recall that the estimator does not spill over the simplex and it is asymptotically unbiased with an error that is uniform on the simplex by Theorem~\ref{thm:bias.var.density}, including the boundary). We estimated the density (using Equation~\ref{eq:Dirichlet.estimator}) for each $\mathcal{S}_2$ side of the $\mathcal{S}_4$ simplex that involve Apatite, the value mineral. For each ternary diagram, a regular grid of $100\times100$ nodes was constructed, those observations being zero for all three components were filtered out, and a small $\epsilon$ was added to all selected data, i.e., we computed (Equation~\ref{eq:Dirichlet.estimator}) with $\mathbf{X}_i' = \alpha \mathbf{X}_i+\epsilon \mathbf{1}$ with $\alpha$ such that  $\mathbf{1}^{\top}\mathbf{X}_i'=1$, i.e., the data are re-closed to sum to 1. This was done to avoid the collapse of the Dirichlet kernels when an observation is exactly zero. $\epsilon$ was chosen to be half the grid step. This treatment is reasonable both given the observation mechanism of this data \cite{doi:10.14278/rodare.543,doi:10.1016/j.mineng.2021.107054} and the regularity and the bias-stability properties of the estimator proven in Theorem~\ref{thm:bias.var.density} and Theorem~\ref{thm:Theorem.3.1.Babu.Canty.Chaubey}. The resulting density estimates are displayed in Fig.~\ref{fig:concentrate-tailings}, both in raw and log scale, showing a strong concentration at the vertices and sides of the simplex: note the intense color of the pixel nearest to (0,0), corresponding to pure Apatite particles. Notice that the estimated density is strictly contained within the simplex, and it is not spilling over beyond its boundaries: this is particularly important given the strong concentration of data at the sides of the simplex.

\begin{figure}[htb]
    \centering
    \includegraphics[width=0.95\textwidth]{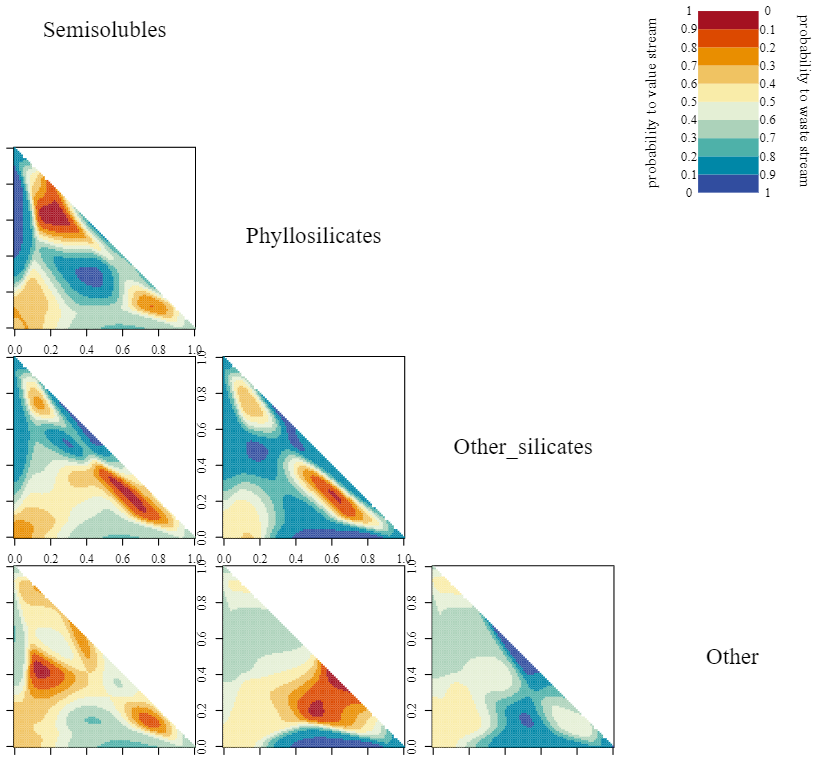}
    \caption{Tromp curves on the simplex for the system Apatite-semisolubles-phyllosilicates-other silicates-other minerals (red: high probability of going to value stream; blue: high probability of going to waste stream).}
    \label{fig:Tromp}
\end{figure}

\begin{figure}
    \centering
    \begin{subfigure}[b]{0.65\textwidth}
        \includegraphics[width=1\linewidth]{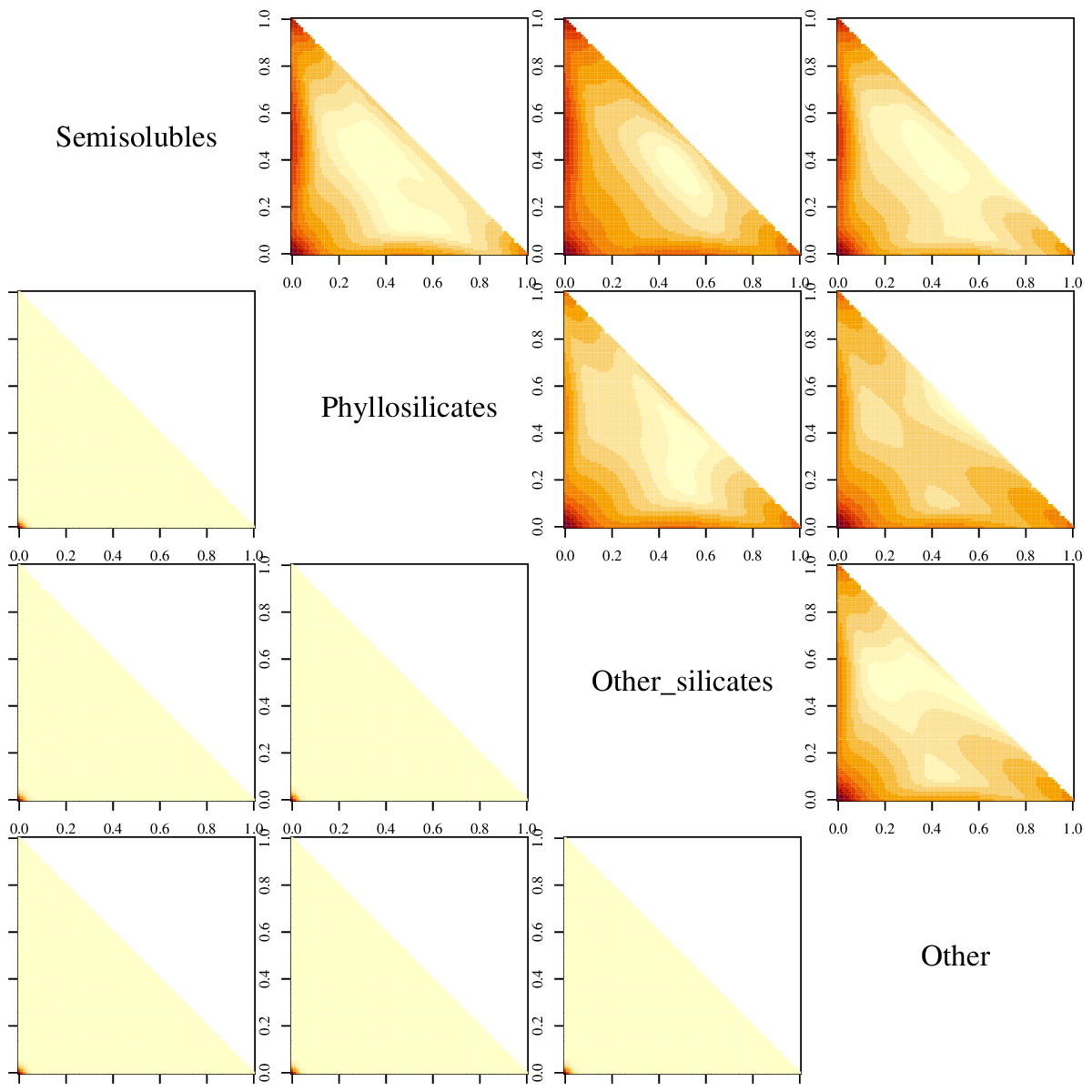}
    \end{subfigure}
    \begin{subfigure}[b]{0.65\textwidth}
        \includegraphics[width=1\linewidth]{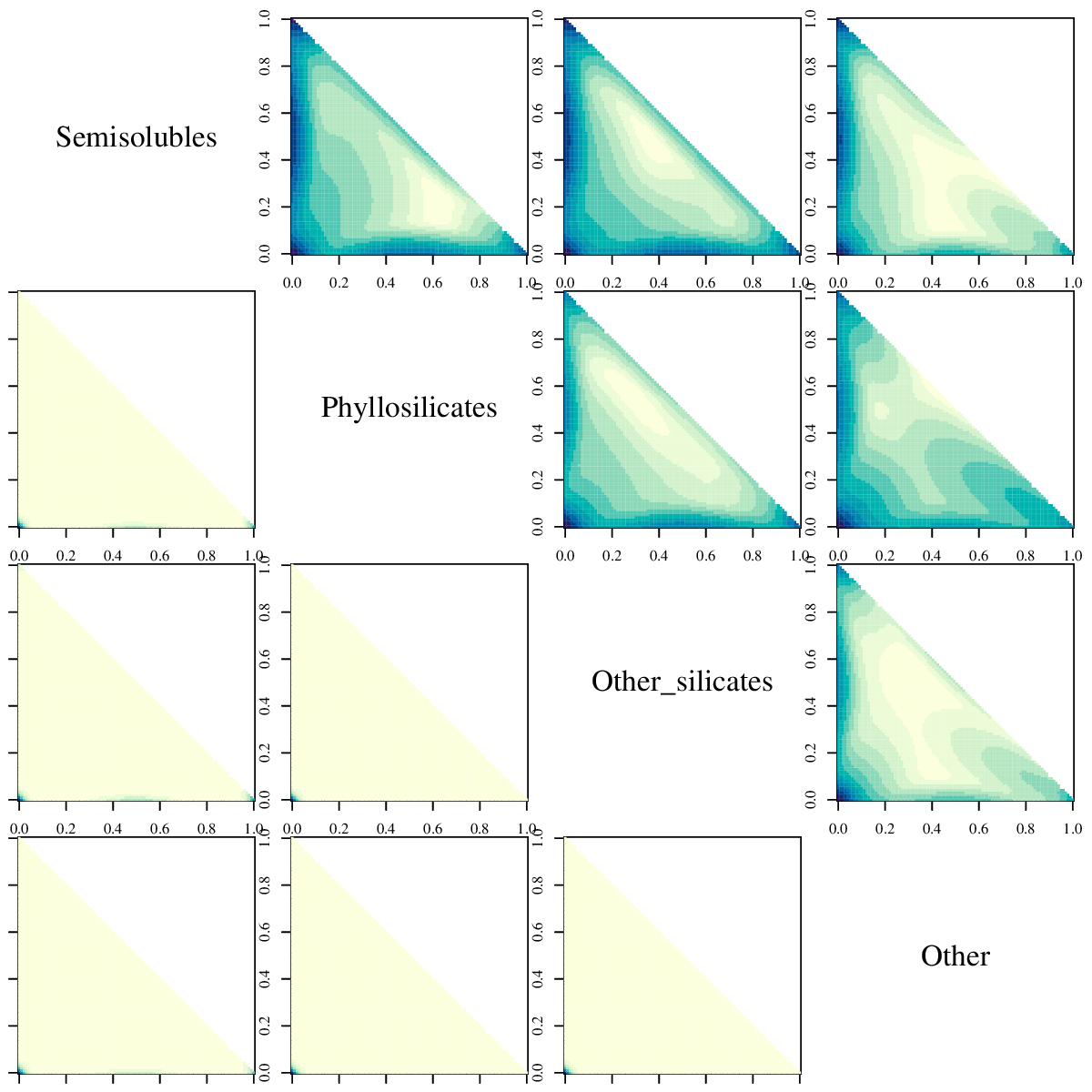}
    \end{subfigure}
    \caption{Dirichlet kernel density estimates of particles on the value stream (\emph{concentrate}, top, yellow to red) and on the waste stream (\emph{tailings}, bottom, yellow to blue): the more intense the red/blue, the higher the density. Upper triangle plots show the log-density, lower triangle plots show the raw density. Each diagram shows data on the simplex (Apatite - variable on the column - variable on the row).}
    \label{fig:concentrate-tailings}
\end{figure}

\newpage
Fig.~\ref{fig:Tromp} shows the results of the calculations for the Tromp simplicial maps, i.e.,
\begin{equation}
    T_b(\bb{s}) = \frac{\hat{f}^{(v)}_{n,b}(\bb{s})}{\hat{f}^{(v)}_{n,b}(\bb{s})+\hat{f}^{(w)}_{n,b}(\bb{s})},
\end{equation}
where the superindexes $^{(v)}$ and $^{(w)}$ represent the density for the value stream and for the waste stream, respectively.
It can be seen that particles formed by less than $\sim 70\%$ Apatite  quickly develop high probabilities of landing in the waste stream. Particles with high proportion of Apatite ($>80\%$) land in the value stream. The complexity of the dependence of the Tromp map on the proportion of phyllosilicates (seen in all diagrams involving this component) suggests the interplay of several factors on top of just the Apatite purity, like, e.g., for particles mostly formed by these sheet silicates, where their hydrodynamics are dominated by their platy shape, and they stay longer in suspension and are transferred to the value stream. Finally, the diagrams involving sulphide minerals (``other'') show that the probabilities of these minerals to report to a specific output stream are much nearer to 0.5-0.5, indicating that the process is not less selective with respect to these minerals.

\newpage
\section{Proofs}\label{sec:proofs}

    \subsection{Proof of Theorem~\ref{thm:bias.var.density}}

    The expression for the pointwise bias follows from the last paragraph in Section~\ref{sec:Dirichlet.kernels}.
    To obtain the asymptotics of the pointwise variance in the interior of the simplex, $\VV(\hat{f}_{n,b}(\bb{s})) = n^{-1} b^{-d/2} \cdot (\psi(\bb{s}) f(\bb{s}) + \OO_{\bb{s}}(b^{1/2}))$, we could again refer to \citet{MR3760293,Kokonendji_Some_2021} if we assumed \eqref{eq:assump:f.density.2}, but here we need a slightly more precise result to get the asymptotics of the MISE later and we work under the weaker assumption \eqref{eq:assump:f.density} that $f$ is Lipschitz continuous on $\mathcal{S}_d$.
    We also want the asymptotics near the boundary, which does not follow from the results of \citet{MR3760293,Kokonendji_Some_2021}.
    For these reasons, we provide a proof below.
    First, note that we can write
    \begin{equation}\label{eq:thm:bias.var.density.begin.variance}
        \hat{f}_{n,b}(\bb{s}) - f_b(\bb{s}) = \frac{1}{n} \sum_{i=1}^n Y_{i,b}(\bb{s}),
    \end{equation}
    where the random variables
    \begin{equation}\label{eq:Y.i.b.random.field}
        Y_{i,b}(\bb{s}) \leqdef K_{\frac{\bb{s}}{b} + 1, \frac{1 - \|\bb{s}\|_1}{b} + 1}(\bb{X}_i) - f_b(\bb{s}), ~~1 \leq i \leq n, \quad \text{are i.i.d.}
    \end{equation}
    Hence, if $\bb{\gamma}_{\bb{s}} \sim \mathrm{Dirichlet}(2\bb{s} / b + 1, 2(1 - \|\bb{s}\|_1) / b + 1)$, then
    \begin{align}\label{eq:thm:bias.var.density.variance.expression}
        \VV(\hat{f}_{n,b}(\bb{s}))
        &= n^{-1} \EE\left[K_{\bb{s} / b + 1, (1 - \|\bb{s}\|_1) / b + 1}(\bb{X})^2\right] - n^{-1} \big(f_b(\bb{s})\big)^2 = n^{-1} A_b(\bb{s}) \, \EE[f(\bb{\gamma}_{\bb{s}})] - \OO(n^{-1}) \notag \\[0.5mm]
        &= n^{-1} A_b(\bb{s}) \, (f(\bb{s}) + \OO(b^{1/2})) - \OO(n^{-1}),
    \end{align}
    where
    \begin{align}\label{eq:def.A.b.s}
        A_b(\bb{s})
        &\leqdef \frac{\Gamma(2(1 - \|\bb{s}\|_1) / b + 1) \prod_{i\in [d]} \Gamma(2 s_i / b + 1)}{\Gamma^2((1 - \|\bb{s}\|_1) / b + 1) \prod_{i\in [d]} \Gamma^2(s_i / b + 1)} \cdot \frac{\Gamma^2(1 / b + d + 1)}{\Gamma(2 / b + d + 1)},
    \end{align}
    and where the last line in \eqref{eq:thm:bias.var.density.variance.expression} follows from the Lipschitz continuity of $f$, the Cauchy-Schwarz inequality and the analogue of \eqref{eq:covariance.explicit.estimate} for $\bb{\gamma}_{\bb{s}}$:
    \begin{align}
        \EE[f(\bb{\gamma}_{\bb{s}})] - f(\bb{s})
        &= \sum_{i\in [d]} \OO\Big(\EE\big[|\gamma_i - s_i|\big]\Big) \leq \sum_{i\in [d]} \OO\bigg(\hspace{-0.5mm}\sqrt{\EE\big[|\gamma_i - s_i|^2\big]}\bigg) = \OO(b^{1/2}).
    \end{align}
    The conclusion of Theorem~\ref{thm:bias.var.density} follows from \eqref{eq:thm:bias.var.density.variance.expression} and Lemma~\ref{lem:A.b.x.asymptotics} below.

    \begin{lemma}\label{lem:A.b.x.asymptotics}
        We have, as $b\to 0$ and uniformly for $\bb{s}\in \mathcal{S}_d$,
        \begin{equation}\label{eq:lem:A.b.x.asymptotics.first.bound}
            0 < A_b(\bb{s}) \leq \frac{b^{(d + 1) / 2} \, (1 / b + d)^{d + 1/2}}{(4\pi)^{d/2} \sqrt{(1 - \|\bb{s}\|_1) \prod_{i\in [d]} s_i}} \, (1 + \OO(b)).
        \end{equation}
        Furthermore, for any subset $\emptyset \neq \mathcal{J}\subseteq [d]$, and any $\bb{\kappa}\in (0,\infty)^d$,
        \begin{equation}
            A_b(\bb{s}) =
            \begin{cases}
                b^{-d/2} \, \psi(\bb{s}) (1 + \OO_{\bb{s}}(b)), &\mbox{if } s_i / b \to \infty ~\forall i\in [d] ~\text{and}~ (1 - \|\bb{s}\|_1) / b \to \infty, \\
                b^{-(d + |\mathcal{J}|)/2} \psi_{\mathcal{J}}(\bb{s}) \prod_{i\in \mathcal{J}} \frac{\Gamma(2\kappa_i + 1)}{2^{2\kappa_i + 1} \Gamma^2(\kappa_i + 1)} \cdot (1 + \OO_{\bb{\kappa},\bb{s}}(b)), &\mbox{if } s_i / b \to \kappa_i ~\forall i\in \mathcal{J} ~\text{and}~ s_i / b\to \infty ~\forall i\in [d]\backslash \mathcal{J} \\[-0.5mm]
                &\text{and}~ (1 - \|\bb{s}\|_1) / b \to \infty,
            \end{cases}
        \end{equation}
        where $\psi$ and $\psi_{\mathcal{J}}$ are defined as in \eqref{eq:def.psi}.
    \end{lemma}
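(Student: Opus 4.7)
My plan is to reduce everything to Stirling's formula. Introducing the shorthand $T \leqdef 1/b$, $\alpha_i \leqdef s_i/b$, $\beta \leqdef (1 - \|\bb{s}\|_1)/b$ (so $\beta + \sum_{i\in [d]} \alpha_i = T$), I factor
\[
A_b(\bb{s}) = G_b \cdot h(\beta) \cdot \prod_{i\in[d]} h(\alpha_i), \quad G_b \leqdef \frac{\Gamma^2(T + d + 1)}{\Gamma(2T + d + 1)}, \quad h(x) \leqdef \frac{\Gamma(2x+1)}{\Gamma^2(x+1)}.
\]
Writing $\Gamma(T + d + 1) = T^d \Gamma(T+1)(1 + \OO(1/T))$ and using Stirling on the remaining Gamma gives $G_b = \sqrt{\pi T}\,T^d / (2^d\cdot 4^T)\cdot (1 + \OO(b))$, while Stirling also gives $h(x) = 4^x/\sqrt{\pi x}\,(1 + \OO(1/x))$ whenever $x\to\infty$.

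For the interior regime, all of $\beta, \alpha_1, \dots, \alpha_d$ tend to infinity, so I apply the Stirling form of $h$ everywhere. The factor $4^{\beta + \sum \alpha_i} = 4^T$ in the numerator cancels $4^T$ in $G_b$, and the remaining polynomial and square-root terms simplify (using $\sqrt{\beta\prod_i \alpha_i} = b^{-(d+1)/2}\sqrt{(1-\|\bb{s}\|_1)\prod_i s_i}$ and $2^d\pi^{d/2} = (4\pi)^{d/2}$) to $b^{-d/2}\psi(\bb{s})(1+\OO_{\bb{s}}(b))$. The edge regime is analogous except that for $i\in \mathcal{J}$ I keep $h(\alpha_i) = h(\kappa_i) + \oo(1)$ intact instead of invoking Stirling. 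The partial cancellation leaves $4^{-\sum_{i\in\mathcal{J}}\alpha_i}\to \prod_{i\in\mathcal{J}} 4^{-\kappa_i}$, which when combined with the $h(\kappa_i)$ factors and the new power of $b$ (one extra factor of $b^{-1/2}$ for each $i\in \mathcal{J}$ no longer divided out by $\sqrt{\pi\alpha_i}$) produces the claimed constant $\prod_{i\in\mathcal{J}} \Gamma(2\kappa_i + 1)/[2^{2\kappa_i + 1}\Gamma^2(\kappa_i + 1)]$, after rewriting $2^d = 2^{|\mathcal{J}|}\cdot 2^{d - |\mathcal{J}|}$ in the normalization of $\psi_{\mathcal{J}}$.

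The delicate step is the uniform upper bound, because the Stirling asymptotic for $h(\alpha_i)$ degenerates as $\alpha_i \to 0$ (i.e.\ when $s_i$ is of order $b$ or smaller). For this I would establish the uniform pointwise inequality
\[
h(x) \leq \frac{4^x}{\sqrt{\pi x}}, \quad x > 0,
\]
which follows from the Legendre duplication formula $\Gamma(2x) = (2\pi)^{-1/2}\, 2^{2x - 1/2}\Gamma(x)\Gamma(x+1/2)$ combined with the Gautschi-type inequality $\Gamma(x + 1/2) \leq \sqrt{x}\,\Gamma(x)$. Applying this to every $h$ factor, and using the one-sided Stirling bound $\Gamma(T + d + 1) \leq \sqrt{2\pi}\,(T+d)^{T + d + 1/2}\, e^{-(T+d)}(1 + \OO(1/T))$ on $G_b$, the $4^{\cdot}$ terms still cancel exactly, and collecting the residual polynomial and square-root terms reproduces the stated prefactor $b^{(d+1)/2}(1/b + d)^{d+1/2}/[(4\pi)^{d/2}\sqrt{(1-\|\bb{s}\|_1)\prod_i s_i}]$ up to a factor $1 + \OO(b)$.
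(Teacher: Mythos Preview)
Your argument is correct and, for the two asymptotic regimes, is essentially the paper's proof rewritten in the variables $G_b$ and $h(x)=\Gamma(2x+1)/\Gamma^2(x+1)$: both you and the paper invoke Stirling on every Gamma factor whose argument tends to infinity, cancel the $4^{\,\cdot}$ contributions via $\beta+\sum_i\alpha_i=T$, and leave the convergent $h(\alpha_i)$ factors intact for $i\in\mathcal J$.

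Where you genuinely diverge from the paper is the uniform upper bound \eqref{eq:lem:A.b.x.asymptotics.first.bound}. The paper packages the Stirling error into the ratio $R(z)=\sqrt{2\pi}\,e^{-z}z^{z+1/2}/\Gamma(z+1)$, writes $A_b(\bb{s})$ as an explicit elementary expression times a correction $S_b(\bb{s})$ built from $R$-ratios, and then bounds $S_b(\bb{s})\le R(2/b+d)/R^2(1/b+d)=1+\OO(b)$ using two nontrivial facts about $R$: that $R(z)<1$ for $z\ge 1$ (quoted from \cite{MR3684463}) and that $R$ is increasing on $(1,\infty)$ (argued via $\psi(z)<\log z - 1/(2z)$, quoting \cite{MR162751}). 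Your route instead isolates the pointwise inequality $h(x)\le 4^x/\sqrt{\pi x}$ for all $x>0$, obtained from the Legendre duplication identity plus the Wendel/Gautschi bound $\Gamma(x+\tfrac12)\le\sqrt{x}\,\Gamma(x)$. This is a cleaner and more self-contained argument: it avoids the monotonicity analysis of the Stirling ratio and replaces two external citations by two textbook identities. The only place to tighten the write-up is the sentence ``using the one-sided Stirling bound \dots\ on $G_b$'': since $G_b$ is a ratio, say explicitly that both $T+d$ and $2T+d$ tend to infinity so that the full two-sided Stirling asymptotic $G_b=\sqrt{\pi T}\,T^d/(2^d 4^T)\,(1+\OO(b))$ applies; the remaining algebra then matches the paper's bound (note $b^{(d+1)/2}(1/b+d)^{d+1/2}=b^{-d/2}(1+\OO(b))$, so the two forms agree).
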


    \begin{proof}[\bf Proof of Lemma~\ref{lem:A.b.x.asymptotics}]
        If we denote
        \begin{equation}
            S_b(\bb{s}) \leqdef \frac{R^2((1 - \|\bb{s}\|_1) / b) \prod_{i\in [d]} R^2(s_i / b)}{R(2(1 - \|\bb{s}\|_1) / b) \prod_{i\in [d]} R(2 s_i / b)} \cdot \frac{R(2 / b + d)}{R^2(1 / b + d)},
        \end{equation}
        where
        \begin{equation}\label{eq:def.R}
            R(z) \leqdef \frac{\sqrt{2\pi} e^{-z} z^{z + 1/2}}{\Gamma(z + 1)}, \quad z\geq 0,
        \end{equation}
        then, for all $\bb{s}\in \mathrm{Int}(\mathcal{S}_d)$, we have
        \begin{align}\label{eq:lem:A.b.x.asymptotics.begin}
            A_b(\bb{s})
            &= \frac{2^{2(1 - \|\bb{s}\|_1) / b + 1/2} \prod_{i\in [d]} 2^{2s_i / b + 1/2}}{(2\pi)^{(d+1)/2} \sqrt{(1 - \|\bb{s}\|_1) / b} \, \prod_{i\in [d]} \sqrt{s_i / b}} \cdot \frac{\sqrt{2\pi} \, e^{-d} (1 / b + d)^{2 / b + 2d + 1}}{(2 / b + d)^{2 / b + d + 1/2}} \cdot S_b(\bb{s}) \notag \\
            &= \frac{b^{(d + 1) / 2} \, (1 / b + d)^{d + 1/2}}{(4\pi)^{d/2} \sqrt{(1 - \|\bb{s}\|_1) \prod_{i\in [d]} s_i}} \cdot \bigg(\frac{2 / b + 2d}{2 / b + d}\bigg)^{2/b + d + 1/2} e^{-d} \cdot S_b(\bb{s}).
        \end{align}
        It well-known that $R(z) < 1$ for all $z\geq 1$, see, e.g., Theorem~2.2 in \cite{MR3684463}.
        We also know that $z\mapsto R(z)$ is increasing on $(1,\infty)$.
        Indeed, by the standard relation $(\Gamma'/\Gamma)(z+1) = 1 / z + (\Gamma'/\Gamma)(z)$ and Lemma~2 in \cite{MR162751}, we have
        \begin{equation*}
            \frac{\rd}{\rd z} \log R(z) = \log z + \frac{1}{2z} - \frac{\Gamma'(z + 1)}{\Gamma(z + 1)} = \log z - \frac{1}{2z} - \frac{\Gamma'(z)}{\Gamma(z)} > 0, \quad \text{for all } z > 1.
        \end{equation*}
        Putting these results on $R$ together, we see that, uniformly for $\bb{s}\in \mathcal{S}_d$,
        \begin{equation}\label{eq:upper.bound.Stirling}
            0 < S_b(\bb{s}) \leq \frac{R(2 / b + d)}{R^2(1 / b + d)}\stackrel{\text{Stirling}}{=} 1 + \OO(b), \quad b\to 0.
        \end{equation}
        Equation \eqref{eq:lem:A.b.x.asymptotics.first.bound} then follows from \eqref{eq:lem:A.b.x.asymptotics.begin}, \eqref{eq:upper.bound.Stirling} and the standard exponential approximation
        \begin{equation}\label{eq:exp.approx}
            \bigg(\frac{2 / b + 2d}{2 / b + d}\bigg)^{2/b + d + 1/2} = \bigg(1 + \frac{d}{2 / b + d}\bigg)^{2/b + d + 1/2} = e^d \, (1 + \OO(b)),
        \end{equation}
        see, e.g., \cite[p.70]{MR0167642}.

        To prove the second claim of the lemma, note that $S_b(\bb{s}) = 1 + \OO_{\bb{s}}(b)$ by Stirling's formula, so as $s_i / b \to \infty ~\forall i\in [d]$ and $(1 - \|\bb{s}\|_1) / b \to \infty$, we have, from \eqref{eq:lem:A.b.x.asymptotics.begin} and \eqref{eq:exp.approx},
        \begin{equation}\label{eq:lem:A.b.x.asymptotics.first.case.end}
            A_b(\bb{s}) = \frac{b^{-d/2} (1 + b d)^{d + 1/2}}{(4\pi)^{d/2} \sqrt{(1 - \|\bb{s}\|_1) \prod_{i\in [d]} s_i}} \cdot (1 + \OO_{\bb{s}}(b)) = \frac{b^{-d/2} (1 + \OO_{\bb{s}}(b))}{(4\pi)^{d/2} \sqrt{(1 - \|\bb{s}\|_1) \prod_{i\in [d]} s_i}}.
        \end{equation}
        Next, let $\emptyset \neq \mathcal{J}\subseteq [d]$ and $\bb{\kappa}\in (0,\infty)^d$.
        If $s_i / b \to \kappa_i$ for all $i\in \mathcal{J}$, $s_i / b\to \infty$ for all $i\in [d]\backslash \mathcal{J}$ and $(1 - \|\bb{s}\|_1) / b \to \infty$, then, from \eqref{eq:def.A.b.s},
        \begin{equation}
            \begin{aligned}
                A_b(\bb{s})
                &= \prod_{i\in \mathcal{J}} \frac{\Gamma(2\kappa_i + 1)}{\Gamma^2(\kappa_i + 1)} \cdot \frac{(1 + \OO_{\bb{\kappa},\bb{s}}(b)) ~ 2^{2(1 - \|\bb{s}\|_1) / b + 1/2} \prod_{i\in [d]\backslash \mathcal{J}} 2^{2s_i / b + 1/2}}{(2\pi)^{(d - |\mathcal{J}| + 1)/2} \sqrt{(1 - \|\bb{s}\|_1) / b} \, \prod_{i\in [d]\backslash \mathcal{J}} \sqrt{s_i / b}} \cdot \frac{\sqrt{2\pi} \, e^{-d} (1 / b + d)^{2 / b + 2d + 1}}{(2 / b + d)^{2 / b + d + 1/2}} \cdot S_b^{\mathcal{J}}(\bb{s}) \\
                &= \prod_{i\in \mathcal{J}} \frac{\Gamma(2\kappa_i + 1)}{\Gamma^2(\kappa_i + 1)} \cdot \frac{(1 + \OO_{\bb{\kappa},\bb{s}}(b)) ~ b^{(d - |\mathcal{J}| + 1)/2} \, (1 / b + d)^{d + 1/2}}{2^{d/2} \prod_{i\in \mathcal{J}} 2^{2\kappa_i + 1/2} (2\pi)^{(d - |\mathcal{J}|)/2} \sqrt{(1 - \|\bb{s}\|_1) \prod_{i\in [d]\backslash \mathcal{J}} s_i}} \cdot \bigg(\frac{2 / b + 2d}{2 / b + d}\bigg)^{2/b + d + 1/2} e^{-d} \cdot S_b^{\mathcal{J}}(\bb{s}),
            \end{aligned}
        \end{equation}
        where
        \begin{equation}
            S_b^{\mathcal{J}}(\bb{s}) \leqdef \frac{R^2((1 - \|\bb{s}\|_1) / b) \prod_{i\in [d]\backslash \mathcal{J}} R^2(s_i / b)}{R(2(1 - \|\bb{s}\|_1) / b) \prod_{i\in [d]\backslash \mathcal{J}} R(2 s_i / b)} \cdot \frac{R(2 / b + d)}{R^2(1 / b + d)}.
        \end{equation}
        Similarly to \eqref{eq:lem:A.b.x.asymptotics.first.case.end}, Stirling's formula and \eqref{eq:exp.approx} imply
        \begin{equation}
            A_b(\bb{s}) = \prod_{i\in \mathcal{J}} \frac{\Gamma(2\kappa_i + 1)}{2^{2\kappa_i + 1} \Gamma^2(\kappa_i + 1)} \cdot \frac{b^{-(d + |\mathcal{J}|)/2} (1 + \OO_{\bb{\kappa},\bb{s}}(b))}{(4\pi)^{(d - |\mathcal{J}|)/2} \sqrt{(1 - \|\bb{s}\|_1) \prod_{i\in [d]\backslash \mathcal{J}} s_i}}.
        \end{equation}
        This concludes the proof of Lemma~\ref{lem:A.b.x.asymptotics} and Theorem~\ref{thm:bias.var.density}.
    \end{proof}

    \subsection{Proof of Theorem~\ref{thm:MISE.optimal.density}}

        By the bound \eqref{eq:lem:A.b.x.asymptotics.first.bound}, the fact that $f$ is uniformly bounded (it is continuous on $\mathcal{S}_d$), the almost-everywhere convergence in \eqref{eq:lem:A.b.x.asymptotics.first.case.end}, and the dominated convergence theorem, we have
        \begin{equation}\label{eq:special.MISE}
            b^{\hspace{0.2mm}d/2} \int_{\mathcal{S}_d} A_b(\bb{s}) f(\bb{s}) \rd \bb{s} = \int_{\mathcal{S}_d} \psi(\bb{s}) f(\bb{s}) \rd \bb{s} + \oo(1).
        \end{equation}
        Therefore, the expressions for the pointwise variance in \eqref{eq:thm:bias.var.density.variance.expression} (using \eqref{eq:special.MISE}) and the pointwise bias in \eqref{eq:thm:bias.var.density.eq.bias} yield
        \begin{equation}
            \mathrm{MISE}[\hat{f}_{n,b}] = \int_{\mathcal{S}_d} \VV(\hat{f}_{n,b}(\bb{s})) + \BB[\hat{f}_{n,b}(\bb{s})]^2 \rd \bb{s} = n^{-1} b^{-d/2} \int_{\mathcal{S}_d} \psi(\bb{s}) f(\bb{s}) \rd \bb{s} + b^2 \int_{\mathcal{S}_d} g^2(\bb{s}) \rd \bb{s} + \oo(n^{-1} b^{-d/2}) + \oo(b^2).
        \end{equation}
        This ends the proof.

    \subsection{Proof of Theorem~\ref{thm:asymptotic.L1.bound}}

        By Lemma~2 in \cite{MR760686}, if $\xi_1,\dots,\xi_n$ is an i.i.d.\ sequence of random variables with $\EE[|\xi_1|^3] < \infty$, then
        \begin{equation}\label{eq:Devroye.bound}
            \sup_{a\in \R} \left|\EE\Big[\big|\overline{\xi}_n - \EE[\overline{\xi}_n] - a \sqrt{\VV(\overline{\xi}_n)}\big|\Big] - \sqrt{\VV(\overline{\xi}_n)} \, \EE\big[|Z - a|\big]\right| \leq \frac{c_0 \, \EE\big[|\xi_1 - \EE[\xi_1]|^3\big]}{n \, \VV(\xi_1)},
        \end{equation}
        where $\overline{\xi}_n \leqdef \frac{1}{n} \sum_{i=1}^n \xi_i$, $Z\sim \mathcal{N}(0,1)$, and $c_0 > 0$ is a universal constant.
        By applying this result with $\xi_i \leqdef K_{\bb{s}/b + 1, (1 - \|\bb{s}\|_1)/b + 1}(\bb{X}_i)$ (here, $\bb{s}\in \mathrm{Int}(\mathcal{S}_d)$ is fixed) and $a^{\star}(\bb{s}) \leqdef (f(\bb{s}) - \EE[\hat{f}_{n,b}(\bb{s})]) / [\VV(\hat{f}_{n,b}(\bb{s}))]^{1/2}$, we can show
        \begin{equation}\label{eq:thm:asymptotic.L1.bound.beginning}
            \left|\EE\Big[\big|\hat{f}_{n,b}(\bb{s}) - f(\bb{s})\big|\Big] - \sqrt{\VV(\hat{f}_{n,b}(\bb{s}))} \, \EE\big[|Z - a^{\star}(\bb{s})|\big]\right| \leq c_1 \, n^{-1} b^{-d/2} \psi(\bb{s}),
        \end{equation}
        for another constant $c_1 = c_1(d) > 0$ that depends only on $d$.
        Indeed, to get the last inequality, note that, as $n\to \infty$,
        \begin{equation}\label{eq:Jensen.third.moment.xi.1}
            \frac{\EE\big[|\xi_1 - \EE[\xi_1]|^3\big]}{\VV(\xi_1)} \leq \frac{4 \, \big\{\EE[\xi_1^3] + (\EE[\xi_1])^3\big\}}{\EE[\xi_1^2] - (\EE[\xi_1])^2} = 4 \cdot \frac{\EE[\xi_1^3]}{\EE[\xi_1^2]} + \OO(1),
        \end{equation}
        by applying Jensen's inequality.
        Similarly to the proof of Theorem~\ref{thm:bias.var.density} (which includes Lemma~\ref{lem:A.b.x.asymptotics}), we have
        \begin{equation}\label{eq:Jensen.third.moment.xi.1.next}
            \frac{\EE[\xi_1^3]}{\EE[\xi_1^2]} = \widetilde{A}_{b}(\bb{s}) \, (1 + \OO(b^{1/2})),
        \end{equation}
        where
        \begin{equation}\label{eq:def.A.b.s.three}
            \widetilde{A}_b(\bb{s}) \leqdef \frac{\Gamma(3(1 - \|\bb{s}\|_1) / b + 1)}{\Gamma(2(1 - \|\bb{s}\|_1) / b + 1) \Gamma((1 - \|\bb{s}\|_1) / b + 1)} \cdot \frac{\prod_{i\in [d]} \Gamma(3 s_i / b + 1)}{\prod_{i\in [d]} \big\{\Gamma(2 s_i / b + 1) \Gamma(s_i / b + 1)\big\}} \cdot \frac{\Gamma(2 / b + d + 1) \Gamma(1 / b + d + 1)}{\Gamma(3 / b + d + 1)}.
        \end{equation}
        Following the first part of the proof of Lemma~\ref{lem:A.b.x.asymptotics}, it is straightforward to show that
        \begin{align}\label{eq:bound.third.moment.xi.1}
            \widetilde{A}_b(\bb{s})
            &\leq \frac{3^{3(1 - \|\bb{s}\|_1) / b + 1/2}}{(2\pi)^{(d+1)/2} 2^{2(1 - \|\bb{s}\|_1) / b + 1/2} \sqrt{(1 - \|\bb{s}\|_1) / b}} \cdot \frac{\prod_{i\in [d]} 3^{3s_i / b + 1/2}}{\prod_{i\in [d]} 2^{2s_i / b + 1/2} \, \prod_{i\in [d]} \sqrt{s_i / b}} \notag \\
            &\quad\cdot \frac{\sqrt{2\pi} \, e^{-d} (2 / b + d)^{2 / b + d + 1/2} (1 / b + d)^{1 / b + d + 1/2}}{(3 / b + d)^{3 / b + d + 1/2}} \cdot (1 + \OO(b)) \notag \\
            &= \frac{b^{(d+1)/2} (3/b + d)^{d + 1/2} \, e^d}{\pi^{d/2} \, 3^{3d/2 + 1/2} \sqrt{(1 - \|\bb{s}\|_1) \prod_{i\in [d]} s_i}} \cdot (1 + \OO(b)) \cdot \bigg(\frac{3 / b + 3d / 2}{3 / b + d}\bigg)^{2/b + d + 1/2} e^{-d} \cdot \bigg(\frac{3 / b + 3d}{3 / b + d}\bigg)^{1/b + d + 1/2} e^{-d} \notag \\
            &= \frac{b^{-d/2} (1 + \OO(b))}{(3 \pi)^{d/2} \sqrt{(1 - \|\bb{s}\|_1) \prod_{i\in [d]} s_i}}.
        \end{align}
        Hence, putting \eqref{eq:Devroye.bound}, \eqref{eq:Jensen.third.moment.xi.1}, \eqref{eq:Jensen.third.moment.xi.1.next} and \eqref{eq:bound.third.moment.xi.1} together proves \eqref{eq:thm:asymptotic.L1.bound.beginning}.

        Now, by \eqref{eq:thm:asymptotic.L1.bound.beginning}, the triangle inequality and the fact that $\psi$ is integrable on $\mathcal{S}_d$ yield
        \begin{equation}\label{eq:thm:asymptotic.L1.bound.next}
            \Bigg|\mathrm{MIAE}[\hat{f}_{n,b}] - \int_{\mathcal{S}_d} w(\bb{s}) \, \EE\bigg|Z - \frac{b \, g(\bb{s})}{w(\bb{s})}\bigg| \rd \bb{s}\Bigg| \leq c_2 \, n^{-1} b^{-d/2} + \int_{\mathcal{S}_d} \left|\sqrt{\VV(\hat{f}_{n,b}(\bb{s}))} \, \EE|Z - a^{\star}(\bb{s})| - w(\bb{s}) \, \EE\bigg|Z - \frac{b \, g(\bb{s})}{w(\bb{s})}\bigg|\right|\rd \bb{s}.
        \end{equation}
        where $w(\bb{s}) \leqdef n^{-1/2} b^{-d/4} \sqrt{\psi(\bb{s}) f(\bb{s})}$ and $c_2 = c_2(d) > 0$ is a constant that depends only on $d$.
        It was shown in Lemma~7 of \citet{MR760686} that, for all $u,w\geq 0$ and all $v,z\in \R$,
        \begin{equation}
            \left|u \, \EE\Big|Z + \frac{v}{u}\Big| - w \, \EE\Big|Z - \frac{z}{w}\Big|\right| \leq \sqrt{\frac{2}{\pi}} \, |u - w| + |v - z|,
        \end{equation}
        so the right-hand side of \eqref{eq:thm:asymptotic.L1.bound.next} is bounded from above by
        \begin{equation}
            \begin{aligned}
                &c_2 \, n^{-1} b^{-d/2} + \int_{\mathcal{S}_d} \bigg|\sqrt{\VV(\hat{f}_{n,b}(\bb{s}))} - \frac{\sqrt{\psi(\bb{s}) f(\bb{s})}}{n^{1/2} \, b^{\hspace{0.2mm}d/4}}\bigg| \rd \bb{s} + \int_{\mathcal{S}_d} \big|\BB[\hat{f}_{n,b}(\bb{s})] - b \, g(\bb{s})\big| \rd \bb{s}.
            \end{aligned}
        \end{equation}
        By the expression \eqref{eq:thm:bias.var.density.variance.expression} for the pointwise variance (using Lemma~\ref{lem:A.b.x.asymptotics}), and the pointwise bias in \eqref{eq:thm:bias.var.density.eq.bias}, the above is $\OO(n^{-1} b^{-d/2}) + \oo(n^{-1/2} b^{-d/4}) + \oo(b)$, which proves \eqref{eq:L1.asymp}. The bound \eqref{eq:L1.asymp.bound} is a direct consequence of \eqref{eq:L1.asymp} together with the trivial bound $\EE|Z - u| \leq \sqrt{2/\pi} + |u|$.
        This ends the proof.

    \subsection{Proof of Theorem~\ref{thm:Theorem.3.1.Babu.Canty.Chaubey}}

    This is the most technical proof, so here is the idea.
    The first three lemmas below bound, uniformly, the Dirichlet density (Lemma~\ref{lem:Dirichlet.density.bound}), the partial derivatives of the Dirichlet density with respect to the parameters $\alpha_1,\dots,\alpha_d$ and $\beta$ (Lemma~\ref{lem:Dirichlet.density.bound.derivatives}), and then the absolute difference of densities (pointwise and under expectations) that have different parameters (Lemma~\ref{lem:differences.of.Dirichlet.densities}).
    This is then used to show continuity estimates for the random field $\bb{s}\mapsto Y_{i,b}(\bb{s})$ from \eqref{eq:Y.i.b.random.field} (Proposition~\ref{prop:continuity.estimate}), meaning that we get a control on the probability that $Y_{i,b}(\bb{s})$ and $Y_{i,b}(\bb{s}')$ are too far apart when $\bb{s}$ and $\bb{s}'$ are close.
    The proof of Proposition~\ref{prop:continuity.estimate} relies on a novel chaining argument.
    From this, we easily deduce large deviation bounds for the supremum of $Y_{i,b}(\bb{s})$ over points $\bb{s}'$ that are inside a small hypercube of width $2b$ centered at $\bb{s}$ (Corollary~\ref{cor:large.deviation}).
    Since $\hat{f}_{n,b}(\bb{s}) - f_b(\bb{s}) = \frac{1}{n} \sum_{i=1}^n Y_{i,b}(\bb{s})$, we can estimate tail probabilities for the supremum of $|\hat{f}_{n,b} - f_b|$ over $\mathcal{S}_d(b d)$ by a union bound over the suprema on a collection of small hypercubes that partitions $\mathcal{S}_d(b d)$ and apply a large deviation bound from Corollary~\ref{cor:large.deviation} to each one of them.

    In the first lemma, we bound the density of the $\mathrm{Dirichlet}\hspace{0.2mm}(\bb{\alpha},\beta)$ distribution from \eqref{eq:Dirichlet.density}.
    \begin{lemma}\label{lem:Dirichlet.density.bound}
        If $\alpha_1, \dots, \alpha_d, \beta \geq 2$, then
        \begin{equation}\label{eq:lem:Dirichlet.density.bound}
            \sup_{\bb{s}\in \mathcal{S}_d} K_{\bb{\alpha},\beta}(\bb{s}) \leq \sqrt{\frac{\|\bb{\alpha}\|_1 + \beta - 1}{(\beta - 1) \prod_{i\in [d]} (\alpha_i - 1)}} ~ (\|\bb{\alpha}\|_1 + \beta - d - 1)^d.
        \end{equation}
    \end{lemma}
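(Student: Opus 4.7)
The plan is to (i) locate the unique interior mode of $K_{\bb{\alpha},\beta}$ on $\mathcal{S}$, (ii) evaluate the density there in closed form, and (iii) apply Stirling bounds to the resulting Gamma ratio, together with a careful algebraic reduction. Throughout, set $M \leqdef \|\bb{\alpha}\|_1 + \beta - 1$ and $N \leqdef M - d = \|\bb{\alpha}\|_1 + \beta - d - 1$.

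Since each exponent $\alpha_i - 1, \beta - 1$ is positive, $K_{\bb{\alpha},\beta}$ vanishes on $\partial\mathcal{S}$; moreover $\log K_{\bb{\alpha},\beta}$ is strictly concave on $\mathrm{Int}(\mathcal{S})$ (each summand $(\alpha_i-1)\log s_i$ is strictly concave in its coordinate and $(\beta-1)\log(1-\|\bb{s}\|_1)$ is concave), so the supremum is attained at the unique interior critical point. The first-order conditions $(\alpha_i - 1)/s_i = (\beta - 1)/(1 - \|\bb{s}\|_1)$ identify this point as $\bb{s}^{*}$ with $s_i^{*} = (\alpha_i - 1)/N$ and $1 - \|\bb{s}^{*}\|_1 = (\beta - 1)/N$. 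Substituting into \eqref{eq:Dirichlet.density} and collecting an $N^{-N}$ factor,
\begin{equation*}
K_{\bb{\alpha},\beta}(\bb{s}^{*}) = \frac{\Gamma(M+1)}{\Gamma(\beta)\prod_{i\in [d]}\Gamma(\alpha_i)}\cdot \frac{(\beta - 1)^{\beta - 1}\prod_{i\in [d]}(\alpha_i - 1)^{\alpha_i - 1}}{N^{N}}.
\end{equation*}

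Next I would attack the Gamma ratio with Robbins' upper bound $\Gamma(z+1) \leq \sqrt{2\pi z}(z/e)^{z}e^{1/(12z)}$ on the numerator, and the lower bound $\Gamma(z+1) \geq \sqrt{2\pi z}(z/e)^{z}$ (equivalent to $R(z) \leq 1$ in \eqref{eq:def.R} and valid for $z \geq 1$, which is precisely where the hypothesis $\alpha_i,\beta \geq 2$ is consumed) on each Gamma in the denominator. The powers of $e^{-1}$ aggregate with net exponent $M - (\beta - 1) - \sum_{i\in [d]}(\alpha_i - 1) = d$, and the $(\beta - 1)^{\beta - 1}\prod(\alpha_i - 1)^{\alpha_i - 1}$ factors cancel, leaving
\begin{equation*}
K_{\bb{\alpha},\beta}(\bb{s}^{*}) \leq \frac{\sqrt{M}\cdot e^{-d}\cdot e^{1/(12M)}}{(2\pi)^{d/2}\sqrt{(\beta - 1)\prod_{i\in [d]}(\alpha_i - 1)}}\cdot \frac{M^{M}}{N^{N}}.
\end{equation*}

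The remaining algebra telescopes in two stages. First, $M^{M}/N^{N} = (1 + d/N)^{N}\cdot M^{d} \leq e^{d}\cdot M^{d}$, which cancels the Stirling-generated $e^{-d}$; next, $M^{d} = N^{d}(1 + d/N)^{d} \leq 2^{d}\cdot N^{d}$, using $N \geq d + 1$ (forced by $\alpha_i,\beta \geq 2$) to get $d/N < 1$. The bound then becomes $\sqrt{M/((\beta - 1)\prod(\alpha_i - 1))}\cdot N^{d}\cdot 2^{d}e^{1/(12M)}/(2\pi)^{d/2}$, and a short verification gives $2^{d}e^{1/(12M)} \leq (2\pi)^{d/2}$ for all $d \geq 1$ (the worst case $d = 1$ reads $2\, e^{1/12} \approx 2.17 < \sqrt{2\pi} \approx 2.51$), so \eqref{eq:lem:Dirichlet.density.bound} follows. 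The only delicate point I anticipate is precisely this two-step telescoping: a naive one-shot split of $M^{M}/N^{N}$ produces either a spurious $M^{d}$ factor or an incorrect power of $e$, and only the reduction $M^{M}/N^{N} \to e^{d}M^{d} \to (2e)^{d}N^{d}$, paired with the Gaussian constant $(2\pi)^{d/2}$ from Stirling, hits the exact stated form.
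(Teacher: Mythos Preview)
Your proof is correct and follows essentially the same approach as the paper: locate the mode, evaluate the density there, apply Stirling bounds to the Gamma ratio, and verify that the residual dimensional constant is absorbed by $(2\pi)^{d/2}$. The only cosmetic difference is that the paper factors $M^{M}/N^{N}=(1-d/M)^{-M}N^{d}$ in one step and controls it via $(1-y)^{-1}\le e^{7y/5}$ on $[0,1/2]$ (with the Stirling constant $7/5$ in place of your $e^{1/(12M)}$), whereas you go through the intermediate estimate $M^{d}\le 2^{d}N^{d}$; both routes produce a leftover constant bounded by~$1$.
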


    \begin{proof}[\bf Proof of Lemma~\ref{lem:Dirichlet.density.bound}]
        Whenever $\alpha_1, \dots, \alpha_d, \beta \geq 2$, the Dirichlet density $K_{\bb{\alpha},\beta}$ is well-known to maximize at $\bb{s}^{\star} = (\bb{\alpha} - 1) / (\|\bb{\alpha}\|_1 + \beta - d - 1)$.
        At this point, we have
        \begin{equation}\label{eq:Dirichlet.density.at.mode}
            K_{\bb{\alpha},\beta}(\bb{s}^{\star}) = \frac{\Gamma(\|\bb{\alpha}\|_1 + \beta)}{\Gamma(\beta) \prod_{i\in [d]} \Gamma(\alpha_i)} \cdot \frac{(\beta - 1)^{\beta - 1} \prod_{i\in [d]} (\alpha_i - 1)^{\alpha_i - 1}}{(\|\bb{\alpha}\|_1 + \beta - d - 1)^{\|\bb{\alpha}\|_1 + \beta - d - 1}}.
        \end{equation}
        From Theorem~2.2 in \cite{MR3684463}, we also know that, for all $y\geq 2$,
        \begin{equation}
            \sqrt{2\pi} e^{-y + 1} (y - 1)^{y - 1 + \frac{1}{2}} \leq \Gamma(y) \leq \tfrac{7}{5} \cdot \sqrt{2\pi} e^{-y + 1} (y - 1)^{y - 1 + \frac{1}{2}}.
        \end{equation}
        Therefore, \eqref{eq:Dirichlet.density.at.mode} is
        \begin{align}\label{eq:Dirichlet.density.bound}
            &\leq \frac{\frac{7}{5} \sqrt{2\pi} e^{-\|\bb{\alpha}\|_1 - \beta + 1} (\|\bb{\alpha}\|_1 + \beta - 1)^{\|\bb{\alpha}\|_1 + \beta - 1 + \frac{1}{2}}}{\sqrt{2\pi} e^{-\beta + 1} (\beta - 1)^{\beta - 1 + \frac{1}{2}} \prod_{i\in [d]} \sqrt{2\pi} e^{-\alpha_i + 1} (\alpha_i - 1)^{\alpha_i - 1 + \frac{1}{2}}} \cdot \frac{(\beta - 1)^{\beta - 1} \prod_{i\in [d]} (\alpha_i - 1)^{\alpha_i - 1}}{(\|\bb{\alpha}\|_1 + \beta - d - 1)^{\|\bb{\alpha}\|_1 + \beta - d - 1}} \notag \\
            &= \tfrac{7}{5} \, (2\pi)^{-d/2} \cdot e^{-d} \bigg(1 - \frac{d}{\|\bb{\alpha}\|_1 + \beta - 1}\bigg)^{\hspace{-0.5mm}-(\|\bb{\alpha}\|_1 + \beta - 1)} \cdot \sqrt{\frac{\|\bb{\alpha}\|_1 + \beta - 1}{(\beta - 1) \prod_{i\in [d]} (\alpha_i - 1)}} \cdot (\|\bb{\alpha}\|_1 + \beta - d - 1)^d \notag \\
            &\leq \tfrac{7}{5} \, (2\pi)^{-d/2} \cdot e^{\frac{2}{5}d} \cdot \sqrt{\frac{\|\bb{\alpha}\|_1 + \beta - 1}{(\beta - 1) \prod_{i\in [d]} (\alpha_i - 1)}} \cdot (\|\bb{\alpha}\|_1 + \beta - d - 1)^d \notag \\
            &\leq \sqrt{\frac{\|\bb{\alpha}\|_1 + \beta - 1}{(\beta - 1) \prod_{i\in [d]} (\alpha_i - 1)}} \cdot (\|\bb{\alpha}\|_1 + \beta - d - 1)^d,
        \end{align}
        where we used our assumption $\alpha_1, \dots, \alpha_d, \beta \geq 2$ and the fact that $(1 - y)^{-1} \leq e^{\frac{7}{5}y}$ for $y\in [0,1/2]$ to obtain the second inequality.
    \end{proof}

    In the second lemma, we bound the partial derivatives of the $\mathrm{Dirichlet}\hspace{0.2mm}(\bb{\alpha},\beta)$ density with respect to its parameters.

    \begin{lemma}\label{lem:Dirichlet.density.bound.derivatives}
        If $\alpha_1, \dots, \alpha_d, \beta \geq 2$, then for all $\bb{s}\in \mathrm{Int}(\mathcal{S}_d)$,
        \begin{align}
            \left|\frac{\partial}{\partial \alpha_j} K_{\bb{\alpha},\beta}(\bb{s})\right|
            &\leq \Big\{|\log(\|\bb{\alpha}\|_1 + \beta)| + |\log(\alpha_j)| + |\log s_j|\Big\} \cdot \sqrt{\frac{\|\bb{\alpha}\|_1 + \beta - 1}{(\beta - 1) \prod_{i\in [d]} (\alpha_i - 1)}} ~ (\|\bb{\alpha}\|_1 + \beta - d - 1)^d, \label{eq:lem:Dirichlet.density.bound.derivatives.eq.1} \\
            \left|\frac{\partial}{\partial \beta} K_{\bb{\alpha},\beta}(\bb{s})\right|
            &\leq \Big\{|\log(\|\bb{\alpha}\|_1 + \beta)| + |\log(\beta)| + |\log(1 - \|\bb{s}\|_1)|\Big\} \cdot \sqrt{\frac{\|\bb{\alpha}\|_1 + \beta - 1}{(\beta - 1) \prod_{i\in [d]} (\alpha_i - 1)}} ~ (\|\bb{\alpha}\|_1 + \beta - d - 1)^d. \label{eq:lem:Dirichlet.density.bound.derivatives.eq.2}
        \end{align}
    \end{lemma}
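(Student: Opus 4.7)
The plan is to use logarithmic differentiation, since $K_{\bb{\alpha},\beta}(\bb{s})$ factors nicely as a ratio of gamma functions times a product of powers. Writing
\begin{equation*}
    \log K_{\bb{\alpha},\beta}(\bb{s}) = \log \Gamma(\|\bb{\alpha}\|_1 + \beta) - \log \Gamma(\beta) - \sum_{i\in [d]} \log \Gamma(\alpha_i) + (\beta - 1) \log(1 - \|\bb{s}\|_1) + \sum_{i\in [d]} (\alpha_i - 1) \log s_i,
\end{equation*}
I would differentiate to get
\begin{equation*}
    \frac{\partial}{\partial \alpha_j} K_{\bb{\alpha},\beta}(\bb{s}) = K_{\bb{\alpha},\beta}(\bb{s}) \cdot \bigl[\Psi(\|\bb{\alpha}\|_1 + \beta) - \Psi(\alpha_j) + \log s_j\bigr],
\end{equation*}
and an analogous formula in which $\alpha_j$ is replaced by $\beta$ and $\log s_j$ is replaced by $\log(1 - \|\bb{s}\|_1)$, where $\Psi = \Gamma'/\Gamma$ is the digamma function.

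Next, I would dispatch the digamma terms using the classical two-sided inequality $0 < \Psi(z) < \log z$, valid for all $z \geq 2$ (the positivity follows from $\Psi(2) = 1 - \gamma > 0$ and the monotonicity of $\Psi$ on $(0,\infty)$; the upper bound is a standard consequence of the integral representation $\Psi(z) = \log z - \frac{1}{2z} - \int_0^\infty (\tfrac{1}{e^t - 1} - \tfrac{1}{t} + \tfrac{1}{2})e^{-tz}\,\rd t$). Under the assumption $\alpha_1,\dots,\alpha_d,\beta \geq 2$, this gives $|\Psi(\|\bb{\alpha}\|_1 + \beta)| \leq |\log(\|\bb{\alpha}\|_1 + \beta)|$, $|\Psi(\alpha_j)| \leq |\log \alpha_j|$, and likewise $|\Psi(\beta)| \leq |\log \beta|$.

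Applying the triangle inequality to the bracketed bracket above then yields
\begin{equation*}
    \Big|\Psi(\|\bb{\alpha}\|_1 + \beta) - \Psi(\alpha_j) + \log s_j\Big| \leq |\log(\|\bb{\alpha}\|_1 + \beta)| + |\log \alpha_j| + |\log s_j|,
\end{equation*}
and similarly for the $\beta$-derivative. Multiplying by the pointwise bound $K_{\bb{\alpha},\beta}(\bb{s}) \leq \sqrt{(\|\bb{\alpha}\|_1 + \beta - 1)/[(\beta-1)\prod_i(\alpha_i - 1)]} \cdot (\|\bb{\alpha}\|_1 + \beta - d - 1)^d$ supplied by Lemma \ref{lem:Dirichlet.density.bound} finishes both \eqref{eq:lem:Dirichlet.density.bound.derivatives.eq.1} and \eqref{eq:lem:Dirichlet.density.bound.derivatives.eq.2}.

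The proof is essentially bookkeeping once the digamma bound is in place, so there is no real obstacle; the only point requiring a brief justification is the inequality $0 < \Psi(z) < \log z$ for $z \geq 2$, which is why I would either cite a standard reference (e.g.\ Abramowitz--Stegun) or include the short integral-representation argument inline.
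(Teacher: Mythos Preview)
Your proposal is correct and follows essentially the same approach as the paper: logarithmic differentiation to express the derivative as $(\Psi(\|\bb{\alpha}\|_1+\beta)-\Psi(\alpha_j)+\log s_j)\,K_{\bb{\alpha},\beta}(\bb{s})$, the digamma bound $|\Psi(z)|<|\log z|$ for $z\geq 2$ (the paper cites Lemma~2 of \cite{MR162751} rather than arguing inline), the triangle inequality, and then Lemma~\ref{lem:Dirichlet.density.bound}. The paper's proof is simply a more compressed version of what you wrote.
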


    \begin{proof}[\bf Proof of Lemma~\ref{lem:Dirichlet.density.bound.derivatives}]
        The digamma function $\psi(z) \leqdef \Gamma'(z) / \Gamma(z)$ satisfies $|\psi(z)| < |\log(z)|$ for all $z \geq 2$ (see, e.g., Lemma~2 in \cite{MR162751}).
        Hence, for all $j\in [d]$ and all $\bb{s}\in \mathrm{Int}(\mathcal{S}_d)$,
        \begin{align}\label{eq:lem:Dirichlet.density.bound.derivatives.begin}
            \left|\frac{\partial}{\partial \alpha_j} K_{\bb{\alpha},\beta}(\bb{s})\right| = \Big|\big(\psi(\|\bb{\alpha}\|_1 + \beta) - \psi(\alpha_j) + \log s_j\big) \, K_{\bb{\alpha},\beta}(\bb{s})\Big| \leq \Big\{|\log(\|\bb{\alpha}\|_1 + \beta)| + |\log(\alpha_j)| + |\log s_j|\Big\} \, K_{\bb{\alpha},\beta}(\bb{s}).
        \end{align}
        The conclusion \eqref{eq:lem:Dirichlet.density.bound.derivatives.eq.1} follows from Lemma~\ref{lem:Dirichlet.density.bound}.
        The proof of \eqref{eq:lem:Dirichlet.density.bound.derivatives.eq.2} is virtually identical, and thus omitted.
    \end{proof}

    As a consequence of Lemma~\ref{lem:Dirichlet.density.bound.derivatives} and the multivariate mean value theorem, we can control the absolute difference of two Dirichlet densities with different parameters, pointwise and under expectations.

    \begin{lemma}\label{lem:differences.of.Dirichlet.densities}
        If $\alpha_1, \dots, \alpha_d, \beta, \alpha_1', \dots, \alpha_d', \beta' \geq 2$, and $\bb{X}$ is $F$ distributed with a bounded density $f$ supported on $\mathcal{S}_d$, then
        \begin{equation}\label{lem:differences.of.Dirichlet.densities.claim.1}
            \begin{aligned}
                \EE\big[|K_{\bb{\alpha}',\beta'}(\bb{X}) - K_{\bb{\alpha},\beta}(\bb{X})|\big]
                &\leq 3 \, (d + 1) \, \|f\|_{\infty} \sqrt{\tfrac{\|\bb{\alpha} \vee \bb{\alpha}'\|_1 + (\beta \vee \beta') - 1}{((\beta \wedge \beta') - 1) \prod_{i\in [d]} ((\alpha_i \wedge \alpha_i') - 1)}} \cdot \big(\|\bb{\alpha} \vee \bb{\alpha}'\|_1 + (\beta \vee \beta') - d - 1\big)^d \\
                &\quad\cdot \log\big(\|\bb{\alpha} \vee \bb{\alpha}'\|_1 + (\beta \vee \beta')\big) \cdot \|(\bb{\alpha}',\beta') - (\bb{\alpha},\beta)\|_{\infty},
            \end{aligned}
        \end{equation}
        where $\bb{\alpha} \vee \bb{\alpha}' \leqdef (\max\{\alpha_i,\alpha_i'\})_{i\in [d]}$, $\beta \vee \beta' \leqdef \max\{\beta,\beta'\}$, and $\beta \wedge \beta' \leqdef \min\{\beta,\beta'\}$.
        Furthermore, let
        \begin{equation}\label{eq:def.S.delta.second}
            \mathcal{S}_d(\delta) \leqdef \big\{\bb{s}\in \mathcal{S}_d: 1 - \|\bb{s}\|_1 \geq \delta ~\text{and}~ s_i \geq \delta \, \, \forall i\in [d]\big\}, \quad \delta > 0.
        \end{equation}
        Then, for $0 < \delta \leq e^{-1}$, we have
        \begin{equation}\label{lem:differences.of.Dirichlet.densities.claim.2}
            \begin{aligned}
                \max_{\bb{s}\in \mathcal{S}_d(\delta)} |K_{\bb{\alpha}',\beta'}(\bb{s}) - K_{\bb{\alpha},\beta}(\bb{s})|
                &\leq 3 \, (d + 1) \, \|f\|_{\infty} |\log \delta| \cdot \sqrt{\tfrac{\|\bb{\alpha} \vee \bb{\alpha}'\|_1 + (\beta \vee \beta') - 1}{((\beta \wedge \beta') - 1) \prod_{i\in [d]} ((\alpha_i \wedge \alpha_i') - 1)}} \cdot \big(\|\bb{\alpha} \vee \bb{\alpha}'\|_1 + (\beta \vee \beta') - d - 1\big)^d \\
                &\quad\cdot \log\big(\|\bb{\alpha} \vee \bb{\alpha}'\|_1 + (\beta \vee \beta')\big) \cdot \|(\bb{\alpha}',\beta') - (\bb{\alpha},\beta)\|_{\infty}.
            \end{aligned}
        \end{equation}
    \end{lemma}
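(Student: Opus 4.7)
The plan is to reduce both claims to Lemma \ref{lem:Dirichlet.density.bound.derivatives} via a simple application of the multivariate mean value theorem along the line segment connecting $(\bb{\alpha},\beta)$ and $(\bb{\alpha}',\beta')$. Parameterizing this segment by $(\bb{\alpha}(t),\beta(t)) \leqdef (1-t)(\bb{\alpha},\beta) + t(\bb{\alpha}',\beta')$ for $t\in [0,1]$, the fundamental theorem of calculus writes $K_{\bb{\alpha}',\beta'}(\bb{s}) - K_{\bb{\alpha},\beta}(\bb{s})$ as an integral in $t$ of a linear combination of the partial derivatives $\partial_{\alpha_j} K$ and $\partial_\beta K$ evaluated at $(\bb{\alpha}(t),\beta(t))$, with weights given by the components of $(\bb{\alpha}',\beta') - (\bb{\alpha},\beta)$. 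Factoring out $\|(\bb{\alpha}',\beta') - (\bb{\alpha},\beta)\|_\infty$ leaves a sum of $d+1$ partial-derivative terms. Lemma \ref{lem:Dirichlet.density.bound.derivatives} bounds each of these by a parameter-dependent prefactor times a sum of three logarithmic terms. Because $(\bb{\alpha}(t),\beta(t))$ stays in the axis-aligned box $[\bb{\alpha}\wedge \bb{\alpha}',\bb{\alpha}\vee \bb{\alpha}']\times[\beta\wedge\beta',\beta\vee\beta']$, I would uniformly bound the prefactor and the two parameter-dependent logs by their worst-case values (maximize the numerator, minimize the denominator), which immediately produces the square-root expression and the factor $\log(\|\bb{\alpha}\vee\bb{\alpha}'\|_1+\beta\vee\beta')$ appearing in the statement.

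For the pointwise claim \eqref{lem:differences.of.Dirichlet.densities.claim.2}, I would then exploit the constraint $\bb{s}\in \mathcal{S}_\delta$, which forces $|\log s_j|\leq|\log\delta|$ and $|\log(1-\|\bb{s}\|_1)|\leq|\log\delta|$. Combined with $\log(\|\bb{\alpha}\vee\bb{\alpha}'\|_1+\beta\vee\beta')\geq \log(2d+2)\geq 1$ and $|\log\delta|\geq 1$ (thanks to $\delta\leq e^{-1}$), each three-term log sum collapses to at most $3|\log\delta|\log(\|\bb{\alpha}\vee\bb{\alpha}'\|_1+\beta\vee\beta')$. Summing the $d+1$ contributions gives the factor $3(d+1)$. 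The $\|f\|_\infty$ on the right-hand side is harmless to insert here because $\|f\|_\infty\geq 1/\mathrm{vol}(\mathcal{S}) = d!\geq 1$.

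For the expectation claim \eqref{lem:differences.of.Dirichlet.densities.claim.1}, I would integrate the same pointwise derivative bound against $f(\bb{s})\,\rd \bb{s}$. The parameter-only log terms pull out of the integral and are multiplied by $\int_{\mathcal{S}} f = 1$, while the $\bb{s}$-dependent logs contribute at most $\|f\|_\infty \int_{\mathcal{S}} |\log s_j|\,\rd \bb{s}$ and a similar integral involving $|\log(1-\|\bb{s}\|_1)|$; a direct Fubini computation shows each is bounded by a $d$-dependent constant of order one (it collapses to $\int_0^1 |\log u|\,\rd u = 1$ up to a volume factor $\leq 1/(d-1)!$). The same inequalities $\log\geq 1$ and $\|f\|_\infty\geq 1$ then let me absorb everything into the single clean factor $3(d+1)\|f\|_\infty\log(\|\bb{\alpha}\vee\bb{\alpha}'\|_1+\beta\vee\beta')$.

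The proof is essentially bookkeeping and presents no conceptual obstacle. The only slightly delicate point is keeping track of which log terms depend on $\bb{s}$ versus on the parameters, and verifying that the various small constants really do collapse cleanly into the stated factor of $3(d+1)$; once that accounting is done carefully, both inequalities follow in parallel from a single mean-value-theorem computation.
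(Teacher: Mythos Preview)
Your proposal is correct and follows essentially the same approach as the paper: mean value theorem along the segment joining the two parameter vectors, then Lemma \ref{lem:Dirichlet.density.bound.derivatives}, then uniform bounds on the prefactor using the componentwise max/min, and finally bounding $\int_{\mathcal{S}}|\log s_j|\,\rd\bb{s}\leq 1$ (for claim \eqref{lem:differences.of.Dirichlet.densities.claim.1}) or $|\log s_j|\leq|\log\delta|$ on $\mathcal{S}_\delta$ (for claim \eqref{lem:differences.of.Dirichlet.densities.claim.2}). The only cosmetic difference is that the paper pulls out $\|f\|_\infty$ before integrating, whereas you keep $f$ for the parameter-only log terms and invoke $\|f\|_\infty\geq d!\geq 1$ afterwards; either bookkeeping yields the constant $3(d+1)$.
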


    \begin{proof}[\bf Proof of Lemma~\ref{lem:differences.of.Dirichlet.densities}]
        By the triangle inequality and the multivariate mean value theorem,
        \begin{equation}
            \EE\big[|K_{\bb{\alpha}',\beta'}(\bb{X}) - K_{\bb{\alpha},\beta}(\bb{X})|\big] \leq \|f\|_{\infty} \int_{\mathrm{Int}(\mathcal{S}_d)} \Bigg\{\left|\Big.\frac{\partial}{\partial \beta} K_{\bb{\alpha},\beta}(\bb{s})\Big|_{(\bb{\alpha},\beta) = (\bb{\alpha}_{\bb{s}},\beta_{\bb{s}})}\right| |\beta - \beta'| + \sum_{j\in [d]} \bigg| \Big.\frac{\partial}{\partial \alpha_j} K_{\bb{\alpha},\beta}(\bb{s})\Big|_{(\bb{\alpha},\beta) = (\bb{\alpha}_{\bb{s}},\beta_{\bb{s}})}\bigg| |\alpha_j - \alpha_j'|\Bigg\} \rd \bb{s},
        \end{equation}
        where, for every $\bb{s}\in \mathrm{Int}(\mathcal{S}_d)$, $(\bb{\alpha}_{\bb{s}},\beta_{\bb{s}})$ is some point on the line segment joining $(\bb{\alpha},\beta)$ and $(\bb{\alpha}',\beta')$. Now, by the estimates in Lemma~\ref{lem:Dirichlet.density.bound.derivatives}, the above is
        \begin{equation}\label{eq:lem:differences.of.Dirichlet.densities.claim.1.eq.end}
            \begin{aligned}
                &\leq \|f\|_{\infty} \sqrt{\tfrac{\|\bb{\alpha} \vee \bb{\alpha}'\|_1 + (\beta \vee \beta') - 1}{((\beta \wedge \beta') - 1) \prod_{i\in [d]} ((\alpha_i \wedge \alpha_i') - 1)}} \cdot \big(\|\bb{\alpha} \vee \bb{\alpha}'\|_1 + (\beta \vee \beta') - d - 1\big)^d \, \|(\bb{\alpha}',\beta') - (\bb{\alpha},\beta)\|_{\infty} \\
                &\qquad\cdot \Bigg\{2 (d + 1) \log\big(\|\bb{\alpha} \vee \bb{\alpha}'\|_1 + (\beta \vee \beta')\big) + \int_{\mathcal{S}_d} |\log(1 - \|\bb{s}\|_1)| \rd \bb{s} + \sum_{j\in [d]} \int_{\mathcal{S}_d} |\log s_j| \rd \bb{s}\Bigg\}.
            \end{aligned}
        \end{equation}
        The integrals are bounded by $1$ since
        \begin{equation}
            \int_{\mathcal{S}_d} |\log(1 - \|\bb{s}\|_1)| \rd \bb{s} = \int_{\mathcal{S}_d} |\log s_j| \rd \bb{s} \leq \int_0^1 |\log s_j| \rd s_j = 1.
        \end{equation}
        Together with \eqref{eq:lem:differences.of.Dirichlet.densities.claim.1.eq.end}, this proves \eqref{lem:differences.of.Dirichlet.densities.claim.1}.
        The proof of the second claim \eqref{lem:differences.of.Dirichlet.densities.claim.2} follows from a simpler argument (without the integrals), and is left to the reader.
    \end{proof}

    \begin{proposition}[Continuity estimates]\label{prop:continuity.estimate}
        Recall from \eqref{eq:Y.i.b.random.field} that
        \begin{equation}\label{eq:def.Y.i.b}
            Y_{i,b}(\bb{s}) \leqdef K_{\frac{\bb{s}}{b} + 1, \frac{1 - \|\bb{s}\|_1}{b} + 1}(\bb{X}_i) - \EE\left[K_{\frac{\bb{s}}{b} + 1, \frac{1 - \|\bb{s}\|_1}{b} + 1}(\bb{X}_i)\right], \quad 1 \leq i \leq n.
        \end{equation}
        Let $\bb{s}\in \mathcal{S}_d(b(d+1))$, $n \geq 1$, $0 < b < (e^{-16\sqrt{2}} \wedge d^{-1})$, $0 < a \leq e^{-1} \|f\|_{\infty} |\log b| / b^{\hspace{0.2mm}d + 1/2}$, and take the unique
        \begin{equation}\label{eq:cond.delta}
            \delta\in (0,e^{-1}] \quad \text{that satisfies} \quad \delta |\log \delta| = \frac{b^{\hspace{0.2mm}d + 1/2} a}{\|f\|_{\infty} |\log b|}.
        \end{equation}
        Then, for all $h\in \R$,
        \begin{equation}\label{eq:prop:continuity.estimate}
            \PP\left(\sup_{\bb{s}'\in \bb{s} + [-b,b]^d} \bigg|\frac{1}{n} \sum_{i=1}^n Y_{i,b}(\bb{s}')\bigg| \geq h + 2a, \bigg|\frac{1}{n} \sum_{i=1}^n Y_{i,b}(\bb{s})\bigg| \leq h\right) \leq C_{f,d} \exp\left(-\frac{1}{100^2d^{\hspace{0.2mm} 4} \|f\|_{\infty}^2} \cdot \bigg(\frac{n^{1/2} \, b^{\hspace{0.2mm}d + 1/2} a}{|\log \delta| \, |\log b|}\bigg)^2\right),
        \end{equation}
        where $C_{f,d} > 0$ is a constant that depends only on the density $f$ and the dimension $d$.
    \end{proposition}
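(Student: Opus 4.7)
The strategy is a dyadic chaining argument on the cube $\bb{s} + [-b,b]^d$, combined with Bernstein's inequality at each scale. First, for each $k \geq 1$, let $G_k$ be the uniform grid on $\bb{s} + [-b,b]^d$ of spacing $b\, 2^{-(k-1)}$, so that $|G_k| \leq (2^k + 1)^d$ and $G_{k-1} \subset G_k$. For every $\bb{s}' \in \bb{s} + [-b,b]^d$, fix a chain $\bb{s} = \bb{s}_0, \bb{s}_1, \bb{s}_2, \ldots$ with $\bb{s}_k \in G_k$, $\|\bb{s}_1 - \bb{s}\|_\infty \leq b$, and $\|\bb{s}_k - \bb{s}_{k-1}\|_\infty \leq b\, 2^{-(k-1)}$ for $k \geq 2$. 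The hypothesis $\bb{s} \in \mathcal{S}_{b(d+1)}$ ensures that every chain point still lies in $\mathcal{S}$ with Dirichlet parameters $\geq 2$, so Lemmas~\ref{lem:Dirichlet.density.bound}--\ref{lem:differences.of.Dirichlet.densities} apply to each telescoping increment $D_{i,k} \leqdef Y_{i,b}(\bb{s}_k) - Y_{i,b}(\bb{s}_{k-1})$.

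\textbf{Incremental estimates and union bound.} Since the partial derivatives of $K_{\bb{\alpha},\beta}(\bb{x})$ in $(\bb{\alpha},\beta)$ blow up logarithmically as $\bb{x} \to \partial \mathcal{S}$ (Lemma~\ref{lem:Dirichlet.density.bound.derivatives}), I would split $D_{i,k} = D_{i,k} \ind_{\{\bb{X}_i \in \mathcal{S}_\delta\}} + D_{i,k} \ind_{\{\bb{X}_i \notin \mathcal{S}_\delta\}}$. On $\mathcal{S}_\delta$, the sup bound in Lemma~\ref{lem:differences.of.Dirichlet.densities} supplies the deterministic range $M_k \leq C \|f\|_\infty |\log \delta|\, b^{-d-1/2}|\log b|\, 2^{-k}$, and combined with the $L^1$ bound via $\VV(D_{i,k}\ind) \leq M_k \cdot \EE[|D_{i,k}\ind|]$ the variance $V_k \leq C \|f\|_\infty^2 |\log \delta|\, b^{-2d-1}|\log b|^2\, 2^{-2k}$. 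Off $\mathcal{S}_\delta$, the pointwise bound from Lemma~\ref{lem:Dirichlet.density.bound} is absorbed by $\PP(\bb{X}_i \notin \mathcal{S}_\delta) \leq 2(d+1)\|f\|_\infty \delta$, which by \eqref{eq:cond.delta} is precisely of the same order as the sub-Gaussian rate. Allocating the budget geometrically, $a_k = c_k a$ with $\sum_k c_k \leq 1$, I would apply Bernstein's inequality to $\tfrac{1}{n}\sum_i D_{i,k} \ind_{\{\bb{X}_i \in \mathcal{S}_\delta\}}$ for every ordered pair in $G_k \times G_{k-1}$, union-bound over the $\leq (2^k+1)^d$ pairs at scale $k$, and sum over $k$. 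The hypothesis $a \leq e^{-1}\|f\|_\infty |\log b| /b^{d+1/2}$ ensures that the variance term in Bernstein dominates the range term uniformly in $k$, so every contribution collapses into a single sub-Gaussian expression in $T \leqdef n^{1/2} b^{d+1/2} a /(|\log \delta|\, |\log b|)$, yielding the claimed bound.

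\textbf{Main obstacle.} The difficulty lies not in any single estimate but in the multi-scale bookkeeping. The defining relation $\delta|\log \delta| = b^{d+1/2} a /(\|f\|_\infty |\log b|)$ in \eqref{eq:cond.delta} is exactly what equates the boundary-strip error with the chaining tail, so any slack in either side breaks the balance. Choosing $c_k$ to simultaneously (i) sum to $\leq 1$, (ii) keep the Bernstein exponent firmly in the variance regime, and (iii) dominate the $(2^k+1)^d$ blowup from the union bound at every scale requires matching exponential rates on three separate fronts, and this is where the novelty of the chaining argument lies. The explicit constants $100^2\, d^4$ and $C_{f,d}$ then emerge from propagating the dimensional prefactors of Lemmas~\ref{lem:Dirichlet.density.bound}--\ref{lem:differences.of.Dirichlet.densities} through the geometric series $\sum_k c_k$ and the union bound.
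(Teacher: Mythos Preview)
Your chaining skeleton and your use of Lemma~\ref{lem:differences.of.Dirichlet.densities} for the increment bounds are essentially what the paper does for the ``good'' observations. The gap is in how you treat the off-$\mathcal{S}_\delta$ contribution, and this is exactly the structural point where the paper's proof differs from your sketch.

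You split $D_{i,k}$ \emph{inside} the chaining and then say the bad piece $D_{i,k}\ind_{\{\bb{X}_i\notin\mathcal{S}_\delta\}}$ is ``absorbed by $\PP(\bb{X}_i\notin\mathcal{S}_\delta)$''. That is an expectation statement, not a tail bound; the random quantity you must control is $\tfrac{1}{n}\sum_i D_{i,k}\ind_{\{\bb{X}_i\notin\mathcal{S}_\delta\}}$. Its deterministic range, coming from Lemma~\ref{lem:Dirichlet.density.bound}, is of order $b^{-d-1/2}$ with \emph{no} $2^{-k}$ factor, so any Hoeffding/Bernstein bound at scale $k$ produces the same exponent for every $k$. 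After the union over $(2^k+1)^d$ pairs and the sum over $k$, the series does not close, and the geometric allocation $c_k$ cannot repair this because the bad-part exponent does not improve with $k$.

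The paper avoids this by splitting \emph{before} the chaining. It decomposes the event in \eqref{eq:prop:continuity.estimate} into three pieces: (A) the event that the \emph{count} $\sum_i\ind_{\{\bb{X}_i\notin\mathcal{S}_\delta\}}$ exceeds $4n\|f\|_\infty\delta$, controlled by Hoeffding applied to the Bernoulli sum; (B) the event that the off-$\mathcal{S}_\delta$ supremum exceeds $a$ while the count stays below that threshold; and (C) the on-$\mathcal{S}_\delta$ supremum, handled by the chaining you describe (the paper uses Azuma, bounded differences only). The key observation is that (B) has probability \emph{zero}: on the count event one has, deterministically and uniformly in $\bb{s}'$,
\[
\Big|\frac{1}{n}\sum_i\big(Y_{i,b}(\bb{s}')-Y_{i,b}(\bb{s})\big)\ind_{\{\bb{X}_i\notin\mathcal{S}_\delta\}}\Big|\leq 4\cdot 4\|f\|_\infty\delta\cdot b^{-d}\sqrt{b^{-1}+d}=\frac{16\sqrt{1+bd}}{|\log\delta|\,|\log b|}\,a<a,
\]
where the equality is exactly the calibration \eqref{eq:cond.delta} and the strict inequality uses $b<e^{-16\sqrt{2}}\wedge d^{-1}$. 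So the $\delta|\log\delta|$ condition is cashed in not as a variance balance inside Bernstein, but as a purely deterministic bound once the count is controlled. This (A)$+$(B) decoupling---Hoeffding on the indicator, then a pointwise bound---is the missing ingredient in your plan.

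Two minor remarks. The paper uses Azuma rather than Bernstein on the good increments; since the range bound from Lemma~\ref{lem:differences.of.Dirichlet.densities} already carries the $2^{-k}$ factor, no variance estimate is needed and the argument is simpler. Also, $D_{i,k}\ind_{\{\bb{X}_i\in\mathcal{S}_\delta\}}$ is not centered, so whichever inequality you apply there is a small recentering term to absorb; this is harmless but should be noted.
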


    \begin{proof}[\bf Proof of Proposition~\ref{prop:continuity.estimate}]
        By a union bound, the probability in \eqref{eq:prop:continuity.estimate} is
        \begin{equation}\label{eq:prop:continuity.estimate.begin}
            \begin{aligned}
                &\leq \PP\left(\Bigg\{\sup_{\bb{s}'\in \bb{s} + [-b,b]^d} \bigg|\frac{1}{n} \sum_{i=1}^n \big(Y_{i,b}(\bb{s}') - Y_{i,b}(\bb{s})\big) \ind_{\{\bb{X}_i\in \mathcal{S}_d \backslash \mathcal{S}_d(\delta)\}}\bigg| \geq a\Bigg\} \cap \Bigg\{\sum_{i=1}^n \ind_{\{\bb{X}_i\in \mathcal{S}_d \backslash \mathcal{S}_d(\delta)\}} \leq  n \cdot 4 \, \|f\|_{\infty} \delta\Bigg\}\right) \\
                &\quad+ \PP\left(\sum_{i=1}^n \ind_{\{\bb{X}_i\in \mathcal{S}_d \backslash \mathcal{S}_d(\delta)\}} \geq n \cdot 4 \, \|f\|_{\infty} \delta\right) + \PP\left(\sup_{\bb{s}'\in \bb{s} + [-b,b]^d} \bigg|\frac{1}{n} \sum_{i=1}^n \big(Y_{i,b}(\bb{s}') - Y_{i,b}(\bb{s})\big) \ind_{\{\bb{X}_i\in \mathcal{S}_d(\delta)\}}\bigg| \geq a\right) \reqdef (A) + (B) + (C).
            \end{aligned}
        \end{equation}

        In order to bound the term $(A)$ in \eqref{eq:prop:continuity.estimate.begin}, note that our assumption $\bb{s}\in \mathcal{S}_d(b(d+1))$ and $\bb{s}' = \bb{s} + [-b,b]^d$ imply $\bb{s},\bb{s}'\in \mathcal{S}_d(b)$, which in turn implies
        \begin{equation}
            \alpha_1 = \frac{s_1}{b} + 1, \dots, \, \alpha_d = \frac{s_d}{b} + 1, \, \beta = \frac{1 - \|\bb{s}\|_1}{b} + 1 \geq 2, \qquad \alpha_1' = \frac{s_1'}{b} + 1, \dots, \, \alpha_d' = \frac{s_d'}{b} + 1, \, \beta' = \frac{1 - \|\bb{s}'\|_1}{b} + 1 \geq 2,
        \end{equation}
        and thus
        \begin{equation}\label{eq:tech.eq.term.A}
            \sqrt{\frac{\|\bb{\alpha}\|_1 + \beta - 1}{(\beta - 1) \prod_{i\in [d]} (\alpha_i - 1)}} \leq \sqrt{\|\bb{\alpha}\|_1 + \beta - 1} = \sqrt{b^{-1} + d}, \qquad \sqrt{\frac{\|\bb{\alpha}'\|_1 + \beta' - 1}{(\beta' - 1) \prod_{i\in [d]} (\alpha_i' - 1)}} \leq \sqrt{\|\bb{\alpha}'\|_1 + \beta' - 1} = \sqrt{b^{-1} + d}.
        \end{equation}
        Together with our assumption in \eqref{eq:cond.delta} and the upper bound on the Dirichlet density in Lemma~\ref{lem:Dirichlet.density.bound}, we have, on the event $\big\{\sum_{i=1}^n \ind_{\scriptscriptstyle \{\bb{X}_i\in \mathcal{S}_d \backslash \mathcal{S}_d(\delta)\}} \leq n \cdot 4 \, \|f\|_{\infty} \delta\big\}$,
        \begin{equation}\label{eq:second.term.A}
            \bigg|\frac{1}{n} \sum_{i=1}^n \big(Y_{i,b}(\bb{s}') - Y_{i,b}(\bb{s})\big) \ind_{\{\bb{X}_i\in \mathcal{S}_d \backslash \mathcal{S}_d(\delta)\}}\bigg| \leq 4 \cdot 4 \, \|f\|_{\infty} \delta \cdot b^{-d} \sqrt{b^{-1} + d} \leq \frac{16 \sqrt{1 + b d}}{|\log \delta| \, |\log b|} \, a.
        \end{equation}
        Since $0 < \delta \leq e^{-1}$ and $0 < b < (e^{-16\sqrt{2}} \wedge d^{-1})$ by assumption, the above is $< a$, which means that
        \begin{equation}\label{eq:estimate.prob.A}
            (A) = 0.
        \end{equation}

        The term $(B)$ is the probability that there are ``too many bad observations'' (i.e., too many $\bb{X}_i$'s near the boundary of the simplex, where the partial derivatives of the Dirichlet density with respect to $\alpha_1,\dots,\alpha_d$ and $\beta$ explode).
        We will control this term with a concentration bound.
        First, note that the volume of $\mathcal{S}_d \backslash \mathcal{S}_d(\delta)$ is at most $2 d \delta / d!$.
        Indeed, $\mathcal{S}_d(\delta)$ has the shape of a simplex of side-length $1 - 2\delta$ inside $\mathcal{S}_d$, so
        \begin{equation}\label{eq:volume.simplex}
            d! \cdot \text{Volume}(\mathcal{S}_d\backslash \mathcal{S}_d(\delta)) = 1 - (1 - 2 \delta)^d \leq 1 - (1 + d \cdot (- 2 \delta)) = 2 d \delta,
        \end{equation}
        where we used the inequality $(1 + x)^n \geq 1 + n x$, which valid for all $n\in \N$ and $x\geq -1$.
        From \eqref{eq:volume.simplex} and the fact that $\|f\|_{\infty}$ is finite ($f$ is continuous by assumption and $\mathcal{S}_d$ is compact), we get that
        \begin{equation}
            \EE\big[\ind_{\{\bb{X}_i\in \mathcal{S}_d \backslash \mathcal{S}_d(\delta)\}}\big] \leq \frac{2 \|f\|_{\infty}}{(d-1)!} \, \delta.
        \end{equation}
        By applying Hoeffding's inequality and condition \eqref{eq:cond.delta}, we obtain
        \begin{equation}\label{eq:estimate.prob.B}
            (B) \leq \exp\left(-2n \cdot \bigg((2 (d-1)! - 1) \cdot \frac{2 \|f\|_{\infty}}{(d-1)!} \, \delta\bigg)^2\right) \leq \exp\left(-2 \, \bigg(\frac{n^{1/2} \, b^{\hspace{0.2mm}d + 1/2} a}{|\log \delta| \, |\log b|}\bigg)^2\right).
        \end{equation}

        Now, in order to bound the third probability in \eqref{eq:prop:continuity.estimate.begin}, the main idea of the proof is to decompose the supremum with a chaining argument and apply concentration bounds on the increments at each level of the $d$-dimensional tree.
        With the notation $\mathcal{H}_k \leqdef 2^{-k} \cdot b \, \Z^d$, we have the embedded sequence of lattice points
        \begin{equation}
            \mathcal{H}_0 \subseteq \mathcal{H}_1 \subseteq \dots \subseteq \mathcal{H}_k \subseteq \dots \subseteq \R^d.
        \end{equation}
        Hence, for $\bb{s}\in \mathcal{S}_d(b(d+1))$ fixed, and for any $\bb{s}'\in \bb{s} + [-b,b]^d$, let $(\bb{s}_k)_{k\in \N_0}$ be a sequence that satisfies
        \begin{equation}
            \bb{s}_0 = \bb{s}, ~\quad \bb{s}_k - \bb{s}\in \mathcal{H}_k \cap [-b, b]^d, ~\quad \lim_{k\to\infty} \|\bb{s}_k - \bb{s}'\|_{\infty} = 0,
        \end{equation}
        and
        \begin{equation}
            (\bb{s}_{k+1})_i = (\bb{s}_k)_i \pm 2^{-k-1} b, \quad \text{for all } i\in [d].
        \end{equation}
        Since the map $\bb{s}\mapsto \frac{1}{n} \sum_{i=1}^n Y_{i,b}(\bb{s})$ is almost-surely continuous,
        \begin{equation}
            \bigg|\frac{1}{n} \sum_{i=1}^n \big(Y_{i,b}(\bb{s}') - Y_{i,b}(\bb{s})\big) \ind_{\{\bb{X}_i\in \mathcal{S}_d(\delta)\}}\bigg| \leq \sum_{k=0}^{\infty} \bigg|\frac{1}{n} \sum_{i=1}^n \big(Y_{i,b}(\bb{s}_{k+1}) - Y_{i,b}(\bb{s}_k)\big)  \ind_{\{\bb{X}_i\in \mathcal{S}_d(\delta)\}}\bigg|,
        \end{equation}
        and since, $\sum_{k=0}^{\infty} \frac{1}{2(k+1)^2} \leq 1$, we have the inclusion of events,
        \small
        \begin{equation}
            \left\{\sup_{\bb{s}'\in \bb{s} + [-b,b]^d} \bigg|\frac{1}{n} \sum_{i=1}^n \big(Y_{i,b}(\bb{s}') - Y_{i,b}(\bb{s})\big) \ind_{\{\bb{X}_i\in \mathcal{S}_d(\delta)\}}\bigg| \geq a\right\} \, \subseteq \, \bigcup_{k=0}^{\infty} \hspace{-3mm}\bigcup_{\substack{\bb{s}_k \in \bb{s} + \mathcal{H}_k \cap [-b,b]^d \\ (\bb{s}_{k+1})_i = (\bb{s}_k)_i \pm 2^{-k - 1} b ~\forall i\in [d]}} \hspace{-4mm}\left\{\bigg|\frac{1}{n} \sum_{i=1}^n \big(Y_{i,b}(\bb{s}_{k+1}) - Y_{i,b}(\bb{s}_k)\big) \ind_{\{\bb{X}_i\in \mathcal{S}_d(\delta)\}}\bigg| \geq \frac{a}{2 (k + 1)^2}\right\}.
        \end{equation}
        \normalsize
        By a union bound and the fact that $|\mathcal{H}_k \cap [-b,b]^d| \leq 2^{(k+2)d}$,
        \begin{equation}\label{eq:prop:continuity.estimate.before.Bernstein}
            (C) \leq \sum_{k=0}^{\infty} ~ 2^{(k+2)d} ~\cdot~ 2^d \hspace{-5mm} \sup_{\substack{\bb{s}_k \in \bb{s} + \mathcal{H}_k \cap [-b,b]^d \\ (\bb{s}_{k+1})_i = (\bb{s}_k)_i \pm 2^{-k - 1} b ~\forall i\in [d]}} \PP\left(\bigg|\frac{1}{n} \sum_{i=1}^n \big(Y_{i,b}(\bb{s}_{k+1}) - Y_{i,b}(\bb{s}_k)\big) \ind_{\{\bb{X}_i\in \mathcal{S}_d(\delta)\}}\bigg| \geq \frac{a}{2 (k + 1)^2}\right).
        \end{equation}
        By Azuma's inequality (see, e.g., Theorem~1.3.1 in \cite{MR1422018}), Lemma~\ref{lem:differences.of.Dirichlet.densities} (Note that $\bb{s}\in \mathcal{S}_d(b(d+1))$ and $\bb{s}'\in \bb{s} + [-b,b]^d$ imply $\bb{s}_k\in \mathcal{S}_d(b)$ for all $k\in \N_0$, so that
        \begin{equation*}
            \alpha_1 = \frac{(\bb{s}_k)_1}{b} + 1, \dots, \, \alpha_d = \frac{(\bb{s}_k)_d}{b} + 1, \, \beta = \frac{1 - \|\bb{s}_k\|_1}{b} + 1 \geq 2, \quad \text{for all } k\in \N_0.)
        \end{equation*}
        and \eqref{eq:def.Y.i.b}, the above is
        \begin{align}\label{eq:prop:continuity.estimate.before.Bernstein.next}
            &\leq \sum_{k=0}^{\infty} ~ 2^{(k+3)d} \cdot 2 \, \exp\left(-\frac{n a^2}{8 (k + 1)^4} \cdot \left(25 \, d^{\hspace{0.2mm} 2} \|f\|_{\infty} \frac{|\log \delta| \, |\log b|}{b^{\hspace{0.2mm}d + 1/2} \, 2^{k + 1}}\right)^{\hspace{-0.5mm}-2\hspace{0.5mm}}\right) \notag \\
            &\leq \sum_{k=0}^{\infty} ~ 2^{(k+3)d} \cdot 2 \, \exp\left(-\frac{2^{2k-1}}{25^2 d^{\hspace{0.2mm} 4} \|f\|_{\infty}^2\, (k+1)^4} \cdot \bigg(\frac{n^{1/2} \, b^{\hspace{0.2mm}d + 1/2} a}{|\log \delta| \, |\log b|}\bigg)^2\right).
        \end{align}
        The minimum of $k\mapsto 0.99 \cdot 2^{2k-1} (k+1)^{-4}$ on $\N_0$ is larger than say $1/16$, so we deduce
        \begin{equation}\label{eq:estimate.prob.C}
            (C) \leq C_{f,d} \exp\left(-\frac{1}{100^2 d^{\hspace{0.2mm} 4} \|f\|_{\infty}^2} \cdot \bigg(\frac{n^{1/2} \, b^{\hspace{0.2mm}d + 1/2} a}{|\log \delta| \, |\log b|}\bigg)^2\right),
        \end{equation}
        for some large constant $C_{f,d} > 0$.
        Putting \eqref{eq:estimate.prob.A}, \eqref{eq:estimate.prob.B} and \eqref{eq:estimate.prob.C} together in \eqref{eq:prop:continuity.estimate.begin} concludes the proof of Proposition~\ref{prop:continuity.estimate}.
    \end{proof}

    \begin{corollary}[Large deviation estimates]\label{cor:large.deviation}
        Recall $Y_{i,b}(\bb{s})$ from \eqref{eq:def.Y.i.b}.
        Let $\bb{s}\in \mathcal{S}_d(b(d+1))$, $n \geq 100^6 d^{\hspace{0.2mm} 6}$, $n^{-1/d} \leq b \leq (e^{-16\sqrt{2}} \wedge d^{-1})$, $0 < a \leq e^{-1} \|f\|_{\infty} |\log b| / b^{\hspace{0.2mm}d + 1/2}$, and take the unique
        \begin{equation}\label{eq:cond.delta.2}
            \delta\in (0,e^{-1}] \quad \text{that satisfies} \quad \delta \, |\log \delta| = \frac{b^{\hspace{0.2mm}d + 1/2} a}{\|f\|_{\infty} |\log b|}.
        \end{equation}
        Then, we have
        \begin{equation}\label{eq:cor:large.deviation}
            \PP\left(\sup_{\bb{s}'\in \bb{s} + [-b,b]^d} \bigg|\frac{1}{n} \sum_{i=1}^n Y_{i,b}(\bb{s}')\bigg| \geq 3a\right) \leq C_{f,d} \exp\left(-\frac{1}{100^2d^{\hspace{0.2mm} 4} \|f\|_{\infty}^2} \cdot \bigg(\frac{n^{1/2} \, b^{\hspace{0.2mm}d + 1/2} a}{|\log \delta| \, |\log b|}\bigg)^2\right),
        \end{equation}
        where $C_{f,d} > 0$ is a constant that depends only on the density $f$ and the dimension $d$.
    \end{corollary}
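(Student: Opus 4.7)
The strategy is to split the supremum event around its value at the base point $\bb{s}$ and control each piece separately: the deviation at the single point $\bb{s}$ will be handled by a Bernstein-type concentration inequality for i.i.d.\ bounded random variables, and the oscillation around $\bb{s}$ will be handled by the already-established continuity estimate of Proposition \ref{prop:continuity.estimate}. Concretely, I would use the inclusion
\begin{equation*}
    \Big\{\sup_{\bb{s}'\in \bb{s} + [-b,b]^d} \Big|\tfrac{1}{n} \sum_{i=1}^n Y_{i,b}(\bb{s}')\Big| \geq 2a\Big\}
    \subseteq
    \Big\{\Big|\tfrac{1}{n} \sum_{i=1}^n Y_{i,b}(\bb{s})\Big| > a\Big\}
    \cup
    \Big\{\sup_{\bb{s}'}\Big|\tfrac{1}{n} \sum_{i=1}^n Y_{i,b}(\bb{s}')\Big| \geq 2a, \ \Big|\tfrac{1}{n} \sum_{i=1}^n Y_{i,b}(\bb{s})\Big| \leq a\Big\}
\end{equation*}
and bound the two probabilities on the right separately.

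The second probability is exactly of the form appearing in Proposition \ref{prop:continuity.estimate} with the choice $h = a$; since the hypotheses on $b$, $a$, $\delta$, $\bb{s}$ inherited from the corollary match those of the proposition, we get the desired exponential bound $C_{f,d}\exp(-\frac{1}{100^2 d^4 \|f\|_\infty^2}(\frac{n^{1/2} b^{d+1/2} a}{|\log\delta||\log b|})^2)$ directly.

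For the first probability, I would apply a Bernstein inequality (as stated in the Appendix \ref{sec:supplemental.material}) to the i.i.d.\ centered random variables $Y_{i,b}(\bb{s})$. Two ingredients are needed: a uniform envelope and a variance bound. The envelope comes from Lemma \ref{lem:Dirichlet.density.bound}: since $\bb{s}\in\mathcal{S}_{b(d+1)}\subseteq\mathcal{S}_b$, all the Dirichlet parameters $s_i/b+1$ and $(1-\|\bb{s}\|_1)/b+1$ are $\geq 2$, so the density is bounded by $\OO(b^{-d-1/2})$, giving $|Y_{i,b}(\bb{s})|\leq M := C\hspace{0.2mm} b^{-d-1/2}$. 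The variance bound comes from the proof of Theorem \ref{thm:bias.var.density} (specifically \eqref{eq:thm:bias.var.density.variance.expression} together with Lemma \ref{lem:A.b.x.asymptotics}): $\VV(Y_{i,b}(\bb{s})) \leq C_f\, b^{-d/2}=:\sigma^2$. Bernstein then yields
\begin{equation*}
    \PP\Big(\Big|\tfrac{1}{n} \sum_{i=1}^n Y_{i,b}(\bb{s})\Big| > a\Big) \leq 2\exp\Big(-\frac{na^2/2}{\sigma^2+Ma/3}\Big).
\end{equation*}

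The final step is a size-comparison: I need to verify that this Bernstein tail is dominated by the proposition's bound. Using the range of $a$ allowed by the hypothesis ($a\le e^{-1}\|f\|_\infty |\log b|\, b^{-d-1/2}$) one has $Ma\lesssim |\log b|$ and $Ma\gtrsim \sigma^2$ whenever $a\gtrsim b^{(d+1)/2}$, while in the complementary regime $\sigma^2$ dominates. In both regimes a short computation gives $na^2/(\sigma^2+Ma)\gtrsim (n^{1/2}b^{d+1/2}a)^2/(|\log\delta|^2|\log b|^2)$, using that $\delta\leq e^{-1}$ and the conditions $n\geq 100^6 d^6$ and $b\geq n^{-1/d}$ (which prevent $M/n$ from blowing up and allow absorbing the multiplicative constants into $C_{f,d}$). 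Adding the two bounds and enlarging $C_{f,d}$ yields the claimed estimate.

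The main (mild) obstacle is the bookkeeping in the last step: one must carefully check that the Bernstein exponent is at least as large as the continuity-estimate exponent $(n^{1/2} b^{d+1/2} a / (|\log\delta||\log b|))^2 / (100^2 d^4 \|f\|_\infty^2)$ across the entire allowed range of $a$, and that the lower bound $b\geq n^{-1/d}$ (equivalently $nb^d\geq 1$) is what guarantees $M/n$ stays controlled. Once this is checked, the two bounds combine into a single exponential of the desired form.
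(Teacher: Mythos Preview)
Your decomposition and the application of Proposition \ref{prop:continuity.estimate} to the oscillation piece match the paper exactly. The divergence is in how you treat the single-point deviation $\PP\big(|\tfrac{1}{n}\sum_i Y_{i,b}(\bb{s})| > a\big)$: the paper simply applies Azuma's (i.e.\ Hoeffding's) inequality using only the envelope $|Y_{i,b}(\bb{s})| \lesssim b^{-d-1/2}$ coming from Lemma \ref{lem:Dirichlet.density.bound} via the same computation as \eqref{eq:tech.eq.term.B}. This already gives an exponent of order $n a^2 b^{2d+1}$, which dominates the target exponent $n a^2 b^{2d+1}/(|\log\delta|^2|\log b|^2)$ immediately since $|\log\delta|\geq 1$ and $|\log b|\geq 16\sqrt{2}$ --- no case analysis on the size of $a$ is needed.

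Your route via Bernstein is viable but heavier, and it contains a slip: the claim $\VV(Y_{i,b}(\bb{s}))\leq C_f\, b^{-d/2}$ is \emph{not} uniform over $\mathcal{S}_{b(d+1)}$, because the factor $\psi(\bb{s}) = \big((4\pi)^d(1-\|\bb{s}\|_1)\prod_i s_i\big)^{-1/2}$ in Lemma \ref{lem:A.b.x.asymptotics} blows up like $b^{-(d+1)/2}$ when the coordinates of $\bb{s}$ are only of order $b$. The correct uniform bound there is $\sigma^2\leq M\|f\|_\infty\lesssim b^{-d-1/2}$ (just use $\EE[K^2]\leq M\,\EE[K]\leq M\|f\|_\infty$), and with this corrected $\sigma^2$ your Bernstein argument still closes after the case split --- but at that point $\sigma^2$ and $M$ have the same order and you are essentially reproducing Hoeffding. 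Note also that the Appendix in fact states only Hoeffding (Lemma \ref{lem:Hoeffding.inequality}), not Bernstein.
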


    \begin{proof}[\bf Proof of Corollary~\ref{cor:large.deviation}]
        By a union bound, the probability in \eqref{eq:cor:large.deviation} is
        \begin{equation}
            \leq \PP\left(\sup_{\bb{s}'\in \bb{s} + [-b,b]^d} \bigg|\frac{1}{n} \sum_{i=1}^n Y_{i,b}(\bb{s}')\bigg| \geq 3a, \bigg|\frac{1}{n} \sum_{i=1}^n Y_{i,b}(\bb{s})\bigg| \leq a\right) + \PP\left(\bigg|\frac{1}{n} \sum_{i=1}^n Y_{i,b}(\bb{s})\bigg| \geq a\right).
        \end{equation}
        The first probability is bounded using Proposition~\ref{prop:continuity.estimate}.
        We get the same bound on the second probability by applying Azuma's inequality and Lemma~\ref{lem:differences.of.Dirichlet.densities}, as we did in \eqref{eq:prop:continuity.estimate.before.Bernstein.next}.
    \end{proof}

    We are now ready to prove Theorem~\ref{thm:Theorem.3.1.Babu.Canty.Chaubey}.
    On the one hand, the Lipschitz continuity of $f$, Jensen's inequality and \eqref{eq:covariance.explicit.estimate}, imply that, uniformly for $\bb{s}\in \mathcal{S}_d$,
    \begin{equation}\label{eq:thm:Theorem.3.1.Babu.Canty.Chaubey.control.T.m}
        f_b(\bb{s}) - f(\bb{s}) = \EE[f(\bb{\xi}_{\bb{s}})] - f(\bb{s}) = \sum_{i\in [d]} \OO\Big(\EE\big[|\xi_i - s_i|\big]\Big) \leq \sum_{i\in [d]} \OO\bigg(\hspace{-0.5mm}\sqrt{\EE\big[|\xi_i - s_i|^2\big]}\bigg) = \OO(b^{1/2}).
    \end{equation}
    On the other hand, recall from \eqref{eq:thm:bias.var.density.begin.variance} that
    \begin{equation}\label{eq:f.n.b.minus.f.b.star}
        \hat{f}_{n,b}(\bb{s}) - f_b(\bb{s}) = \frac{1}{n} \sum_{i=1}^n Y_{i,b}(\bb{s}).
    \end{equation}
    By a union bound over the suprema on hypercubes of width $2b$ centered at each $\bb{s}\in 2b \, \mathbb{Z}^d \cap \mathcal{S}_d(b(d+1))$, and the large deviation estimates in Corollary~\ref{cor:large.deviation} with
    \begin{equation}\label{eq:choice.a}
        a = 100 \, d^{\hspace{0.2mm} 2} \frac{(\log n)^{3/2}}{\sqrt{n}} \cdot \frac{\|f\|_{\infty} |\log b|}{b^{\hspace{0.2mm}d + 1/2}}
    \end{equation}
    (the upper bound condition on $a$ is satisfied as long as $100 \, d^{\hspace{0.2mm} 2} (\log n)^{3/2} / \sqrt{n} \leq e^{-1}$, which is valid if $n\geq 100^6 d^{\hspace{0.2mm} 6}$ for example)
    and the unique $\delta\in (0,e^{-1}]$ that satisfies
    \begin{equation}\label{eq:cond.delta.verif}
        \delta |\log \delta| = \frac{b^{\hspace{0.2mm}d + 1/2} a}{\|f\|_{\infty} |\log b|} \stackrel{\eqref{eq:choice.a}}{=} 100 \, d^{\hspace{0.2mm} 2} \frac{(\log n)^{3/2}}{\sqrt{n}},
    \end{equation}
    we have
    \begin{align}\label{eq:thm:Theorem.3.1.Babu.Canty.Chaubey.eq.end}
        \PP\left(\sup_{\bb{s}\in \mathcal{S}_d(b d)} |\hat{f}_{n,b}(\bb{s}) - f_b(\bb{s})| \geq 3a\right)
        &\leq \sum_{\bb{s}\in 2b \, \mathbb{Z}^d \cap \mathcal{S}_d(b(d+1))} \hspace{-3mm} \PP\left(\sup_{\bb{s}'\in \bb{s} + [-b,b]^d} \Big|\frac{1}{n} \sum_{i=1}^n Y_{i,b}(\bb{s}')\Big| \geq 3a\right) \notag \\[-0.5mm]
        &\leq b^{-d} \cdot C_{f,d} \exp\left(-\frac{1}{100^2d^{\hspace{0.2mm} 4} \|f\|_{\infty}^2} \cdot \bigg(\frac{n^{1/2} \, b^{\hspace{0.2mm}d + 1/2} a}{|\log \delta| \, |\log b|}\bigg)^2\right) \leq b^{-d} \cdot C_{f,d} \exp\left(- \frac{(\log n)^3}{|\log \delta|^2}\right).
    \end{align}
    The condition imposed on $\delta$ in \eqref{eq:cond.delta.verif} implies
    \begin{equation}\label{eq:result.for.delta}
        n^{-1/2} \leq \delta \leq e^{-1}, \quad (\text{and thus } |\log \delta|\leq \tfrac{1}{2} \log n)
    \end{equation} because the function $x\mapsto x |\log x|$ is increasing on $(0,e^{-1}]$.
    Using \eqref{eq:result.for.delta} in \eqref{eq:thm:Theorem.3.1.Babu.Canty.Chaubey.eq.end}, we get
    \begin{equation}
        \PP\left(\sup_{\bb{s}\in \mathcal{S}_d(b d)} |\hat{f}_{n,b}(\bb{s}) - f_b(\bb{s})| \geq 3a\right) \leq C_{f,d} \exp\left(d |\log b| - 4 \log n\right).
    \end{equation}
    Since we assumed that $b \geq n^{-1/d}$, the above is $\leq C_{f,d} \, n^{-3}$, which is summable.
    By our choice of $a$ in \eqref{eq:choice.a} and the Borel-Cantelli lemma, we obtain
    \begin{equation}
        \sup_{\bb{s}\in \mathcal{S}_d(b d)} |\hat{f}_{n,b}(\bb{s}) - f_b(\bb{s})| = \OO\left(\frac{|\log b| (\log n)^{3/2}}{b^{\hspace{0.2mm}d + 1/2} \sqrt{n}}\right), \quad \text{a.s.}
    \end{equation}
    Together with \eqref{eq:thm:Theorem.3.1.Babu.Canty.Chaubey.control.T.m}, the conclusion follows.

    \subsection{Proof of Theorem~\ref{thm:Theorem.3.2.and.3.3.Babu.Canty.Chaubey}}

        By \eqref{eq:f.n.b.minus.f.b.star}, the asymptotic normality of $n^{1/2} b^{\hspace{0.2mm}d/4} (\hat{f}_{n,b}(\bb{s}) - f_b(\bb{s}))$ will be proved if we verify the following Lindeberg condition for double arrays (see, e.g., Section~1.9.3 in \cite{MR0595165}):
        For every $\e > 0$,
        \begin{equation}\label{eq:prop:Proposition.1.Babu.Canty.Chaubey.Lindeberg.condition}
            s_b^{-2} \, \EE\left[|Y_{1,b}(\bb{s})|^2 \, \ind_{\{|Y_{1,b}(\bb{s})| > \e n^{1/2} s_b\}}\right] \longrightarrow 0, \quad n\to \infty,
        \end{equation}
        where $s_b^2 \leqdef \EE\big[|Y_{1,b}(\bb{s})|^2\big]$ and $b = b(n)\to 0$.
        From Lemma~\ref{lem:Dirichlet.density.bound}, we know that
        \begin{equation}
            |Y_{1,b}(\bb{s})| = \OO\big(\psi(\bb{s}) \, b^{\hspace{0.2mm}d/2} \cdot b^{-d}\big) = \OO_{\bb{s}}(b^{-d/2}),
        \end{equation}
        and we also know that $s_b = b^{-d/4} \sqrt{\psi(\bb{s}) f(\bb{s})} \, (1 + \oo_{\bb{s}}(1))$ when $f$ is Lipschitz continuous, by the proof of Theorem~\ref{thm:bias.var.density}, so
        \begin{equation}\label{eq:prop:Proposition.1.Babu.Canty.Chaubey.Lindeberg.condition.verify}
            \frac{|Y_{1,b}(\bb{s})|}{n^{1/2} s_b} = \OO_{\bb{s}}(n^{-1/2} \, b^{\hspace{0.2mm}d/4} \, b^{-d/2}) = \OO_{\bb{s}}(n^{-1/2} b^{-d/4}) \longrightarrow 0,
        \end{equation}
        whenever $n^{1/2} b^{\hspace{0.2mm}d/4}\to \infty$ as $n\to \infty$ and $b\to 0$.
        Under this condition, \eqref{eq:prop:Proposition.1.Babu.Canty.Chaubey.Lindeberg.condition} holds (since for any given $\e > 0$, the indicator function is equal to $0$ for $n$ large enough, independently of $\omega$) and thus
        \begin{equation}
            \begin{aligned}
                n^{1/2} b^{\hspace{0.2mm}d/4} (\hat{f}_{n,b}(\bb{s}) - f_b(\bb{s}))
                &= n^{1/2} b^{\hspace{0.2mm}d/4} \cdot \frac{1}{n} \sum_{i=1}^n Y_{i,m} \stackrel{\mathscr{D}}{\longrightarrow} \mathcal{N}(0,\psi(\bb{s}) f(\bb{s})).
            \end{aligned}
        \end{equation}
        This ends the proof.

\section*{Acknowledgments}

F.\ Ouimet is supported by a postdoctoral fellowship from the NSERC (PDF) and the FRQNT (B3X supplement).
We thank the Editor, the Associate Editor and the referees for their insightful remarks which led to improvements in the presentation of this paper.

\section*{Author contributions}

\begin{itemize}\setlength\itemsep{0em}
    \item F.\ Ouimet: writing of the original draft and editing, review of the literature, conceptualization, theoretical results and proofs; responsible for Sections~\ref{sec:overview}, \ref{sec:Dirichlet.kernels}, \ref{sec:asymptotic.properties} and \ref{sec:proofs}, and parts of Section~\ref{sec:introduction}.
    \item R.\ Tolosana-Delgado: writing of the case study and the practical motivations in the introduction; responsible for Section~\ref{sec:case.study} and parts of Section~\ref{sec:introduction}.
\end{itemize}

\begin{appendices}
    \section{Supplementary data}
    \texttt{R} codes related to this article can be found online at \url{https://doi.org/10.1016/j.jmva.2021.104832}.
\end{appendices}

%
%

\phantomsection
\addcontentsline{toc}{chapter}{References}

\bibliographystyle{myjmva}
\bibliography{Ouimet_2021_review_Bernstein_and_asymmetric_kernels_bib}

\end{document}